\tikzset{
>=stealth',
help lines/.style={dashed, thick},
axis/.style={<->},
important line/.style={thick},
connection/.style={thick, dotted},
}
\newcommand{\nc}{\newcommand}
\nc{\rnc}{\renewcommand}
\nc{\bb}[1]{{\mathbb #1}}
\nc{\bbA}{\bb{A}}\nc{\bbB}{\bb{B}}\nc{\bbC}{\bb{C}}\nc{\bbD}{\bb{D}}
\nc{\bbE}{\bb{E}}\nc{\bbF}{\bb{F}}\nc{\bbG}{\bb{G}}\nc{\bbH}{\bb{H}}
\nc{\bbI}{\bb{I}}\nc{\bbJ}{\bb{J}}\nc{\bbK}{\bb{K}}\nc{\bbL}{\bb{L}}
\nc{\bbM}{\bb{M}}\nc{\bbN}{\bb{N}}\nc{\bbO}{\bb{O}}\nc{\bbP}{\bb{P}}
\nc{\bbQ}{\bb{Q}}\nc{\bbR}{\bb{R}}\nc{\bbS}{\bb{S}}\nc{\bbT}{\bb{T}}
\nc{\bbU}{\bb{U}}\nc{\bbV}{\bb{V}}\nc{\bbW}{\bb{W}}\nc{\bbX}{\bb{X}}
\nc{\bbY}{\bb{Y}}\nc{\bbZ}{\bb{Z}}
\nc{\mbf}[1]{{\mathbf #1}}
\nc{\bfA}{\mbf{A}}\nc{\bfB}{\mbf{B}}\nc{\bfC}{\mbf{C}}\nc{\bfD}{\mbf{D}}
\nc{\bfE}{\mbf{E}}\nc{\bfF}{\mbf{F}}\nc{\bfG}{\mbf{G}}\nc{\bfH}{\mbf{H}}
\nc{\bfI}{\mbf{I}}\nc{\bfJ}{\mbf{J}}\nc{\bfK}{\mbf{K}}\nc{\bfL}{\mbf{L}}
\nc{\bfM}{\mbf{M}}\nc{\bfN}{\mbf{N}}\nc{\bfO}{\mbf{O}}\nc{\bfP}{\mbf{P}}
\nc{\bfQ}{\mbf{Q}}\nc{\bfR}{\mbf{R}}\nc{\bfS}{\mbf{S}}\nc{\bfT}{\mbf{T}}
\nc{\bfU}{\mbf{U}}\nc{\bfV}{\mbf{V}}\nc{\bfW}{\mbf{W}}\nc{\bfX}{\mbf{X}}
\nc{\bfY}{\mbf{Y}}\nc{\bfZ}{\mbf{Z}}
\nc{\bfa}{\mbf{a}}\nc{\bfb}{\mbf{b}}\nc{\bfc}{\mbf{c}}\nc{\bfd}{\mbf{d}}
\nc{\bfe}{\mbf{e}}\nc{\bff}{\mbf{f}}\nc{\bfg}{\mbf{g}}\nc{\bfh}{\mbf{h}}
\nc{\bfi}{\mbf{i}}\nc{\bfj}{\mbf{j}}\nc{\bfk}{\mbf{k}}\nc{\bfl}{\mbf{l}}
\nc{\bfm}{\mbf{m}}\nc{\bfn}{\mbf{n}}\nc{\bfo}{\mbf{o}}\nc{\bfp}{\mbf{p}}
\nc{\bfq}{\mbf{q}}\nc{\bfr}{\mbf{r}}\nc{\bfs}{\mbf{s}}\nc{\bft}{\mbf{t}}
\nc{\bfu}{\mbf{u}}\nc{\bfv}{\mbf{v}}\nc{\bfw}{\mbf{w}}\nc{\bfx}{\mbf{x}}
\nc{\bfy}{\mbf{y}}\nc{\bfz}{\mbf{z}}
\nc{\mcal}[1]{{\mathcal #1}}
\nc{\calA}{\mcal{A}}\nc{\calB}{\mcal{B}}\nc{\calC}{\mcal{C}}\nc{\calD}{\mcal{D}}
\nc{\calE}{\mcal{E}} \nc{\calF}{\mcal{F}}\nc{\calG}{\mcal{G}}\nc{\calH}{\mcal{H}}
\nc{\calI}{\mcal{I}}\nc{\calJ}{\mcal{J}}\nc{\calK}{\mcal{K}}\nc{\calL}{\mcal{L}}
\nc{\calM}{\mcal{M}}\nc{\calN}{\mcal{N}}\nc{\calO}{\mcal{O}}\nc{\calP}{\mcal{P}}
\nc{\calQ}{\mcal{Q}}\nc{\calR}{\mcal{R}}\nc{\calS}{\mcal{S}}\nc{\calT}{\mcal{T}}
\nc{\calU}{\mcal{U}}\nc{\calV}{\mcal{V}}\nc{\calW}{\mcal{W}}\nc{\calX}{\mcal{X}}
\nc{\calY}{\mcal{Y}}\nc{\calZ}{\mcal{Z}}
\nc{\fA}{\frak{A}}\nc{\fB}{\frak{B}}\nc{\fC}{\frak{C}} \nc{\fD}{\frak{D}}
\nc{\fE}{\frak{E}}\nc{\fF}{\frak{F}}\nc{\fG}{\frak{G}}\nc{\fH}{\frak{H}}
\nc{\fI}{\frak{I}}\nc{\fJ}{\frak{J}}\nc{\fK}{\frak{K}}\nc{\fL}{\frak{L}}
\nc{\fM}{\frak{M}}\nc{\fN}{\frak{N}}\nc{\fO}{\frak{O}}\nc{\fP}{\frak{P}}
\nc{\fQ}{\frak{Q}}\nc{\fR}{\frak{R}}\nc{\fS}{\frak{S}}\nc{\fT}{\frak{T}}
\nc{\fU}{\frak{U}}\nc{\fV}{\frak{V}}\nc{\fW}{\frak{W}}\nc{\fX}{\frak{X}}
\nc{\fY}{\frak{Y}}\nc{\fZ}{\frak{Z}}
\nc{\fa}{\frak{a}}\nc{\fb}{\frak{b}}\nc{\fc}{\frak{c}} \nc{\fd}{\frak{d}}
\nc{\fe}{\frak{e}}\nc{\fFf}{\frak{f}}\nc{\fg}{\frak{g}}\nc{\fh}{\frak{h}}
\nc{\fri}{\frak{i}}\nc{\fj}{\frak{j}}\nc{\fk}{\frak{k}}\nc{\fl}{\frak{l}}
\nc{\fm}{\frak{m}}\nc{\fn}{\frak{n}}\nc{\fo}{\frak{o}}\nc{\fp}{\frak{p}}
\nc{\fq}{\frak{q}}\nc{\fr}{\frak{r}}\nc{\fs}{\frak{s}}\nc{\ft}{\frak{t}}
\nc{\fu}{\frak{u}}\nc{\fv}{\frak{v}}\nc{\fw}{\frak{w}}\nc{\fx}{\frak{x}}
\nc{\fy}{\frak{y}}\nc{\fz}{\frak{z}}
\newtheorem{theorem}{Theorem}[section]
\newtheorem{lemma}[theorem]{Lemma}
\newtheorem{corollary}[theorem]{Corollary}
\newtheorem{prop}[theorem]{Proposition}
\theoremstyle{definition}
\newtheorem{definition}[theorem]{Definition}
\newtheorem{example}[theorem]{Example}
\newtheorem{remark}[theorem]{Remark}
 \DeclareMathOperator{\gr}{gr}
\DeclareMathOperator{\im}{im} 
\DeclareMathOperator{\codim}{codim} \DeclareMathOperator{\id}{id}
\DeclareMathOperator{\Ext}{{Ext}}
\DeclareMathOperator{\sHom}{{\mathscr{H}om}}
\DeclareMathOperator{\sExt}{{\mathscr{E}xt}}
\DeclareMathOperator{\proj}{pr}
\DeclareMathOperator{\Spec}{{Spec}} 
\DeclareMathOperator{\Aut}{Aut}
\DeclareMathOperator{\sEnd}{{\mathscr{E}nd}}
\DeclareMathOperator{\Coh}{Coh}
\DeclareMathOperator{\Mod}{Mod\hbox{-}}
\DeclareMathOperator{\Pic}{Pic}
\DeclareMathOperator{\ad}{ad}
\newcommand{\catA}{\mathfrak{A}}
\DeclareMathOperator{\Res}{Res}
\newcommand{\inj}{\hookrightarrow}
\newcommand{\al}{\alpha}
\newcommand{\la}{\lambda}
\newcommand{\de}{\delta}
\newcommand{\reg}{\mathrm{reg}}
\newcommand{\dyn}{{\operatorname{d}}}
\newcommand{\frakp}{\mathfrak{p}}
\newcommand{\lag}{\langle}
\newcommand{\rag}{\rangle}
\newcommand{\tensor}{\star}
\newcommand{\dynst}{{\dyn*}}
\newcommand{\whFM}{\widehat{FM}}
 \gdef\Young(#1){\hbox{$\vcenter
 {\mathcode`,="8000\mathcode`|="8000
  \def,{\global\advance\cols by 1 &}%
  \def|{\cr
        \multispan{\the\cols}\hrulefill\cr
        &\global\cols=2 }%
  \offinterlineskip\everycr{}\tabskip=0pt
  \dimen0=\ht\strutbox \advance\dimen0 by \dp\strutbox
  \halign
   {\vrule height \ht\strutbox depth \dp\strutbox##
    &&\hbox to \dimen0{\hss$##$\hss}\vrule\cr
    \noalign{\hrule}&\global\cols=2 #1\crcr
    \multispan{\the\cols}\hrulefill\cr%
   }
 }$}}
\title[A Langlands duality of elliptic Hecke algebras]
{A Langlands duality of elliptic Hecke algebras}
\date{\today}
\author{Gufang Zhao}
\address{School of Mathematics and Statistics, University of Melbourne, Parkville VIC 3010, Australia}
\email{gufangz@unimelb.edu.au}
\author{Changlong Zhong}
\address{State University of New York at Albany, 1400 Washington Ave, CK  343 , Albany, NY 12222}
\email{czhong@albany.edu}
\begin{document}
\begin{abstract}
Associated to any root datum, there is an elliptic affine Hecke algebra defined by Ginzburg, Kapranov, and Vasserot. In this study, we establish a Fourier-Mukai functor from the representation category of the elliptic affine Hecke algebra to the corresponding category associated with the Langlands dual root datum. To achieve this connection, we employ the  elliptic Hecke algebra   with dynamical parameters as an intermediary.
\end{abstract}

\maketitle

\section{Introduction}
The notion of an elliptic affine Hecke algebra, associated to a root datum, was introduced by Ginzburg, Kapranov, and Vasserot \cite{GKV97} as a coherent sheaf on an abelian variety. In this paper, we refer to this algebra as the GKV elliptic Hecke algebra or simply the GKV algebra. Notably, a representation of this algebra has an underlying coherent sheaf on an abelian variety. This observation naturally leads us to the question of whether the Fourier-Mukai transform of the abelian variety can be extended to the representation categories of elliptic affine Hecke algebras. In this work, we provide an affirmative answer to this question.

One crucial component of our study involves the introduction of a  version of elliptic affine Hecke algebras with dynamical parameters. 
This algebra is a sheaf defined on the product of the aforementioned abelian variety and its dual. An essential feature of this algebra is that the correct notion of polynomial representation is given by (a translate of) the Poincar\'e line bundle.
Various components of this algebra have already appeared in literature. Notably, 
Demazure-Lusztig operators with dynamical parameters were originally discovered by Rim\'anyi and Weber in the context of elliptic Schubert calculus \cite{RW20}. A remarkable consequence of their findings is that these operators satisfy the braid relation. Building upon these ideas, in  \cite{ZZ15}, it was observed that these operators can be embedded within an elliptic Hecke algebra with dynamical parameters, a variant of the GKV algebra with the aforementioned polynomial representation, partially inspired by Felder's dynamical elliptic $R$-matrices as well as \cite{RW20}. 
However, for the purpose of representation theory, it is necessary to find a suitable integral structure. 
In \S~\ref{sec:res}, we provide a GKV-style definition of the elliptic Hecke algebra  with dynamical parameters (see Definition~\ref{def:res}), as well as a definition of the representation category associated to this algebra.

Utilizing the Poincar\'e line bundle, we construct a functor from the representation category of the GKV algebra to that of the dynamical elliptic Hecke algebra (see Theorem~\ref{thm:pullback}). Furthermore, we elucidate the Weyl group relations of the dynamical Demazure-Lusztig operators in terms of the Langlands dual root datum (see Theorem~\ref{thm:Weyl}). By combining these results, we demonstrate that the Fourier-Mukai transform on abelian varieties can be extended to the representation categories of GKV algebras (see Corollary ~\ref{cor:fm}).
Various consequences deduced using this functor are discussed in \S~\ref{sec:conclutions}.

\subsection*{Acknowledgements}
The question of Fourier-Mukai transform in this setting arose from the discussions between Valerio Toledano Laredo and G.Z. around 2015. We would like to give a special thanks to him for his insights in this project, and for discussions on related topics. The approach in the present paper by introducing dynamical parameters was conceived when both authors were collaborating with Cristian Lenart on \cite{LZZ23}. Much of the work in this project, discovering the dynamical Hecke algebra and its polynomial representation, is invisible in the short proof presented in this final product. The formulas of Rim\'anyi and Weber \cite{RW20} was pivotal to this invisible process. We were often asked about an explanation of the fact Rim\'anyi-Weber's DL-operators satisfy the Weyl group relations. 
The explanation in the present paper in terms of the Hecke algebra of the Langlands dual came as a pleasant surprise to us. 

G.~Z.~is partially supported by the Australian Research Council via DE190101222 and DP210103081. Part of this research was conducted when C. Z. was visiting the University of Melbourne,  the Sydney Mathematical Research Institute, University of Ottawa, and the Max-Planck Institute of Mathematics in Bonn. He would like to thank these  institutes for their  hospitality and support.

\section{The dynamical elliptic twisted  group algebra}

\subsection{Setup}

 We recall the notion of root datum following \cite[Exp. XXI, \S~1.1]{SGA3}. 
 A root datum of rank $n$ consists of two lattices $X^*$ and $X_*$ of rank $n$ with a perfect $\bbZ$-values pairing $\lag-,-\rag$,  non-empty finite subsets $\Phi\subset X^*$, and $\Phi^\vee\subset X_*$ with a bijection $\alpha\mapsto \alpha^\vee$.
For simplicity, we do not distinguish an abstract root datum and that of a connected, complex reductive group. If the group has maximal torus $T$, the lattices above are respectively $\bbX^*(T)$ and $\bbX_*(T)$, the group of characters and co-characters of $T$. 
In the present paper, we fix a set $\Sigma$ of simple roots $\{\alpha_1,\dots,\alpha_n\}$, with simple reflections $s_i:=s_{\al_i}$.  The Weyl group is denoted by $W$. For each $w\in W$, define $\Phi(w)=w\Phi^-\cap \Phi^+$. 
 Let $\rho$ be the half-sum of all positive roots.

Throughout this paper,  $E$ is an elliptic curve over $\bbC$ of the form
\[
E\cong \bbC/(\bbZ+\bbZ\tau)\cong \bbC^*/q^\bbZ, 
\]
where $\tau\in \bbC$ such that $\im \tau>0$, and $q=e^{2\pi i\tau}$.
We identify $E^\vee=\Pic^0(E)$ with $E$ itself, via the map $E\to E^\vee$, $\lambda\mapsto \{\lambda\}-\{0\}$.
In particular, on $E\times E$, there is a universal line bundle $\calP$, the Poincar\'e line bundle. 

Consider the origin $0\in E$. When the line bundle
  $\calO(\{0\})$ is lifted to the cover $\bbC^*$,  it becomes trivial. This lift is denoted by $\tilde{L}\to \bbC^*$,  and it comes with a lifted section  $\tilde{s}:\bbC^*\to \tilde{L}$.
The trivialization 
\[\xymatrix{
\tilde{L}\ar@/^/[dr]\ar[rr]^{\cong}_{\varphi}&&\bbC^*\times \bbC\ar[dl]\\
&\bbC^*\ar@/^/[ul]^{\tilde{s}}&
}\]
can be uniquely determined based on two conditions: $\varphi$ must commute with multiplication by 
$q^\bbZ$, and  the derivative of $\varphi\circ\tilde{s}$ should be equal to 1 at $1\in \bbC^*$ \cite[p.38]{Sie}.
Recall the Jacobi Theta function
\[\vartheta(u)=\frac{1}{2\pi i}(u^{1/2}-u^{-1/2})\prod_{s>0}(1-q^su)(1-q^su^{-1})\cdot \prod_{s>0}(1-q^s)^{-2}, \quad u\in \bbC^*.\]
Assume $|q|<1$, this series converges and defines a holomorphic function on a double cover on $\bbC^*$.
It vanishes of order 1 at $q^\bbZ$, is non-zero everywhere else, and its derivative at $1\in \bbC^*$ is equal to 1. 

Let us denote the coordinates of $E$ and $E^\vee$ as $z$ and $\lambda$, respectively. The following function
\[\frac{\vartheta(z-\lambda)}{\vartheta(z)}\]
becomes a rational section of 
 $\calP$.

\subsection{The (dynamical) elliptic twisted  group algebra}

 Define $\catA=X_*\otimes E$.  We have an isomorphism \begin{equation}\label{eqn:ab_dual}\catA^\vee\cong X^*\otimes E.\end{equation}
Each $\mu\in X^*$ defines a map $\chi_\mu:\catA\to E$. Denote $z_\mu=\chi_\mu(z), z\in \catA$. Dually, each $\mu^\vee\in X_*$ defines a map $\chi_{\mu^\vee}:\catA^\vee\to E$, and denote $\la_{\mu^\vee}=\chi_{\mu^\vee}(\la), \la\in \catA^\vee$. 
To make the  isomorphism \eqref{eqn:ab_dual} explicit, we consider a special point $\alpha\otimes t\in X^*\otimes E$, with $\alpha\in  X^*$ and $t\in E$. 
The point $\alpha$ defines a map $\chi_\alpha:\catA\to E$, and $t\in E=E^\vee$ defines a degree 0 line bundle $\calO(t)$ on $E$. 
Under the above isomorphism, the line bundle on $\catA$ corresponding to $\alpha\otimes t$ is $\chi_\alpha^*\calO(t)$. 
In what follows, we write $\alpha\otimes t$ simply as $t\alpha$. 
For each $z\in \catA, \la\in \catA^\vee$, the associated line bundle over $\catA$ is denoted by $\calO(\la)$. 

The Weyl group $W$ acts on both $\catA$ and $\catA^\vee$, and hence on the product $\catA\times\catA^\vee$, we have the action of product of two Weyl groups. To avoid confusion, we write the factor of $W$ acting on $\catA$ as $W$ and the factor acting on $\catA^\vee$ as $W^\dyn $. Similarly, for any element $w\in W$, we write the element in $W^\dyn $ as $w^\dyn $.

We define the category $\Coh^{W\times W^\dyn}(\catA\times \catA^\vee)$ of $W\times W^\dyn$-graded  coherent sheaves on $\catA\times \catA^\vee$. The objects are coherent sheaves with a direct-sum decomposition $\calF=\oplus_{w\in W,v\in W^\dyn}\calF_{w,v}$ with  $\calF_{w,v}$ a coherent sheaf, called the degree-$(w,v)$ piece of $\calF$. 
Morphisms in this category respect the grading. 
We then define a monoidal structure on this category. It suffices to work with each graded piece. We define   
\[
\calF_{w,v}\tensor \calF_{w',v'}':=\calF_{w,v}\otimes(w^{-1})^*(v^{-1})^\dynst\calF'_{w',v'},
\]
which is set to be of degree $(ww',vv')$. 
The object $\calO$ concentrated in degree 0 is the unit object.  
The verification of the monoidal axioms are straightforward.

On $\catA\times\catA^\vee$ there is a Poincar\'e line bundle $\calP$. See \cite[\S~9.3]{P} for its defining properties. Define \[\bbL:=\calP\otimes(\calO(\hbar\rho)\boxtimes\calO(-\hbar\rho^\vee)),\]
and define $\bbS_{w,v}=\bbL\otimes (w^{-1})^*(v^{-1})^{\dyn*}\bbL^{-1} $.
The object $\bbS_{W\times W^\dyn}:=\bigoplus_{w,v\in W}\bbS_{w,v}$ has the structure of a monad (algebra) object in the category $\Coh^{W\times W^\dyn}(\catA\times \catA^\vee)$ given by the identity map
\begin{align*}
\bbS_{w,v}\tensor\bbS_{x,y}&=\bbS_{w,v}\otimes (w^{-1})^*(v^{-1})^\dynst\bbS_{x,y}\\
&=\bbL\otimes (w^{-1})^*(v^{-1})^\dynst\bbL^{-1}\otimes (w^{-1})^*(v^{-1})^\dynst\bbL\otimes (w^{-1})^*(v^{-1})^\dynst (x^{-1})^*(y^{-1})^\dynst\bbL^{-1}\\
&\cong\bbL\otimes ((wx)^{-1})^*((vy)^{-1})^{\dyn*}\bbL^{-1}\\
&=\bbS_{wx,vy},
\end{align*}
called the {\it  (dynamical) elliptic  twisted group algebra}. 
We also write $\bbS$ for short. 

Notice also that the monoidal category $\Coh^{W\times W^\dyn}(\catA\times \catA^\vee)$ acts on $\Coh(\catA\times \catA^\vee)$. On graded pieces, the action is given by 
\[\calF_{w,v}\star\calG:=\calF_{w,v}\otimes (w^{-1})^*(v^{-1})^\dynst\calG.\] The object $\bbL$ in $\Coh(\catA\times \catA^\vee)$ with the identity map
\[\bbS_{w,v}\star \bbL\to \bbL\]
is a module over the algebra object $\bbS$. We refer interested readers to \cite{LZZ23} for more details of this monoidal category, the algebra object, as well as their applications in elliptic Schubert calculus. 

Similarly we have the monoidal categories $\Coh^W(\catA)$ and $\Coh^{W^\dyn }(\catA^\vee)$. Obviously, the twisted group algebra \begin{equation}\label{eqn:group_alg}
\calO_{\catA}\rtimes W:=\bigoplus_{w\in W}\calO_{\catA}\end{equation} is an algebra object in $\Coh^W(\catA)$. Similar for $\calO_{\catA^\vee}\rtimes W^\dyn $.

\section{Convolution categories}
\label{sec:conv}
In this section we set up the categorical framework.
\subsection{Convolution categories}
We define another monoidal category in which the elliptic Hecke algebra with the dynamical parameters is an algebra object. 

Let $\pi=\pi_1:\catA\to \catA/W$ be the categorical quotient. Similarly for $\pi_2: \catA^\vee\to \catA^\vee/W^\dyn $. By an abuse of notations we also  write $\pi_1:\catA\times\catA^\vee\to \catA/W\times\catA^\vee$, $\pi_2:\catA\times\catA^\vee\to \catA\times\catA^\vee/W^\dyn $, and $\pi^2:\catA\times\catA^\vee\to \catA/W\times\catA^\vee/W^\dyn $. By an abuse of notations we also write the projection $\pi_1:\catA\times_{\catA/W}\catA\to \catA/W$, $\pi_2:\catA^\vee\times_{\catA^\vee/W^\dyn }\catA^\vee\to \catA^\vee/W^\dyn $, and omit the subindices when causing no confusions. 

The category of coherent sheaves $\Coh(\catA\times_{\catA/W}\catA)$ has a monoidal structure defined as  follows. Notice that $\catA\times_{\catA/W}\catA\subseteq \catA\times\catA$.
Write the projection to the $(i,j)$-th factor as $p_{i,j}:\catA\times\catA\times\catA\to\catA\times\catA$ for $i,j=1,\dots,3$. Denote 
\[\Delta_{2}: \catA\times\catA\times\catA\to \catA\times\catA\times\catA\times\catA, ~(z_1,z_2,z_3)\mapsto (z_1,z_2,z_2,z_3).\] 
(Here the subscript 2 reminds us that $z_2$ is repeated.)
Define the product in $\Coh(\catA\times _{\catA/W}\catA)$:
\begin{equation}\label{eqn:conv}\calF\star\calF':=p_{13*}\Delta^*_{2}(\calF\boxtimes\calF').\end{equation}

\begin{lemma}\label{lem:monoidal_fun}
The functor $\phi:\Coh^{W}(\catA) \to \Coh(\catA\times_{\catA/W} \catA), 
\calF_{w}\mapsto \tilde w_*\calF_{w},$
is a monoidal functor. 
Here $\widetilde w_*$ is the pushforward along the map $\tilde w:\catA\to \catA\times_{\catA/W}\catA, z\mapsto (z,w^{-1}(z))$.
\end{lemma}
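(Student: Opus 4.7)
The plan is to identify each $\phi(\calF_w) = \tilde w_*\calF_w$ as a sheaf supported on the graph $\Gamma_w := \{(z, w^{-1}z) \mid z \in \catA\}$ inside $\catA\times_{\catA/W}\catA$, and then verify the monoidal compatibility by a direct base change computation. I would first note that $\tilde w$ is a closed immersion, being the graph of the automorphism $w^{-1}$ of $\catA$ over the separated base $\catA/W$. The unit constraint $\phi(\calO_\catA) = \tilde e_*\calO_\catA$ is the structure sheaf of the diagonal, which is readily checked to be the identity for $\star$.

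For the binary compatibility, I would introduce the closed immersions
\[
\alpha : \catA \to \catA^3, \quad z \mapsto (z,\, w^{-1}z,\, (ww')^{-1}z),
\]
\[
\gamma : \catA \to \catA\times\catA, \quad z \mapsto (z,\, w^{-1}z),
\]
together with $\beta := \tilde w \times \tilde{w'} : \catA\times\catA \to \catA^4$. A direct inspection shows that the square
\[
\begin{CD}
\catA @>\alpha>> \catA^3 \\
@V\gamma VV @VV\Delta_{2} V \\
\catA\times\catA @>>\beta> \catA^4
\end{CD}
\]
is Cartesian. Since $\catA^4$ is smooth and both $\alpha$ and $\beta$ are closed immersions whose images meet in the expected dimension $n := \dim\catA$ (the codimensions $2n$ and $n$ sum to the codimension $3n$ of the intersection), the intersection is clean and the square is Tor-independent. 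Flat base change then gives $\Delta_{2}^{*}\beta_{*} \cong \alpha_{*}\gamma^{*}$.

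Applying this to $\calF_w \boxtimes \calF'_{w'}$, and using the identities $(\tilde w \times \tilde{w'})_{*}(\calF_w \boxtimes \calF'_{w'}) = \tilde w_{*}\calF_w \boxtimes \tilde{w'}_{*}\calF'_{w'}$ and $\gamma^{*}(\calF_w \boxtimes \calF'_{w'}) = \calF_w \otimes (w^{-1})^{*}\calF'_{w'}$, I would obtain
\[
\Delta_{2}^{*}\bigl(\tilde w_{*}\calF_w \boxtimes \tilde{w'}_{*}\calF'_{w'}\bigr) \;\cong\; \alpha_{*}\bigl(\calF_w \otimes (w^{-1})^{*}\calF'_{w'}\bigr).
\]
Since $p_{13}\circ\alpha = \widetilde{ww'}$, pushing forward along $p_{13}$ yields $\widetilde{ww'}_{*}\bigl(\calF_w \otimes (w^{-1})^{*}\calF'_{w'}\bigr) = \phi(\calF_w \star \calF'_{w'})$, as required. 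The main point demanding care is the Tor-independence of the base-change square; given the explicit parametrizations above, however, this reduces to a transparent transversality check on a product of elliptic curves. The coherence isomorphisms (associativity and unitality) then follow formally from compatibility of iterated base change and require no further computation.
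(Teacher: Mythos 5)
Your proof is correct and follows essentially the same strategy as the paper's: you use the same base-change square (the paper's $p$ is your $\alpha$, its $\tilde w$ on the top is your $\gamma$), identify the fiber product with $\catA$, and then push forward along $p_{13}$ using $p_{13}\circ\alpha = \widetilde{ww'}$. The only genuine difference is that you devote some attention to checking Tor-independence/transversality, whereas the paper just asserts the square is Cartesian. That check is indeed the content of the claim: the scheme-theoretic intersection of $\beta(\catA^2)$ and $\Delta_2(\catA^3)$ inside $\catA^4$ is cut out by $3n$ independent linear conditions, hence is reduced and isomorphic to $\catA$, so the fiber product in the square really is $\catA$ with its natural maps $\alpha$ and $\gamma$. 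Two small points to tighten: (1) the phrase ``flat base change'' is a slight misnomer here since $\Delta_2$ is not flat --- what you want is Tor-independent base change, or simply the direct local computation $M\otimes_A A/J \cong M\otimes_{A/I}A/(I+J)$ valid for any pair of closed immersions; (2) the Tor-independence is really being used to ensure the fiber product is the reduced scheme $\catA$ (rather than some non-reduced thickening), not strictly for the base-change identity itself, which already holds at the non-derived level for closed immersions once the fiber product is correctly identified. Neither of these affects the validity of the argument.
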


\begin{proof}
It suffices to work with individual graded pieces.  Given $\calF_{w}, \calG_{x}\in \Coh^{W}(\catA)$, we have by definition
\begin{align*}
\phi(\calF_{w}\star\calG_{x})=\phi(\calF_{w}\otimes(w^{-1})^* \calG_{x})=(\widetilde{wx})_*(\calF_{w}\otimes (w^{-1})^*\calG_{x})=(\widetilde{wx})_*\widetilde w^*(\calF_w\boxtimes \calG_x).
\end{align*}
On the other hand, 
\begin{align*}\phi(\calF_{w})\star\phi(\calG_{x})=\tilde w_*\calF_{w}\star \tilde x_*\calG_{x}=p_{13*}\Delta^*_{2}(\tilde w_*\calF_{w}\boxtimes \tilde x_*\calG_{x}).
\end{align*}
We show that they coincide.

Denote $\catA^i$ as the product of $i$ copies of $\catA$.  We have the following commutative diagram, where the right square is cartesian:
\[
\xymatrix{&\catA\ar[r]^-{\widetilde w}\ar[ld]_-{\widetilde{wx}}\ar[d]^-p& \catA^2\ar[d]^-{\tilde w\times \tilde x}\\
\catA^2&\catA^3\ar[r]^-{\Delta_{2}}\ar[l]^-{p_{13}}& \catA^4}.
\]
Here $p(z)=(z,w^{-1}(z), x^{-1}w^{-1}(z))$, so $(\widetilde{wx})_*=(p_{13})_*p_*$ and $p_*\tilde x^*=\Delta_{2}^*(\tilde w\times \tilde x)_*$. 
Therefore, 
\begin{align*}
(\widetilde{wx})_*\widetilde w^*(\calF_w\boxtimes \calG_x)&=(p_{13})_*p_*\widetilde w^*(\calF_w\boxtimes \calG_x)
&=(p_{13})_*\Delta_{2}^*(\tilde w\times \tilde x)_*(\calF_w\boxtimes \calG_x)
&=(p_{13})_*\Delta_{2}^*(\tilde w_*\calF_w\boxtimes \tilde x_*\calG_x). 
\end{align*}
The proof is finished. 
\end{proof}

The followings are similar and hence we omit the proofs. 
\begin{lemma}\label{lem:mon_fun_others}
We have the following monoidal functors, where the monoidal structures on the domains and targets are similar to those in Lemma~\ref{lem:monoidal_fun}.
\begin{enumerate}
\item $\Coh^{W\times W^\dyn}(\catA\times \catA^\vee) \to \Coh^{W^\dyn}(\catA\times_{\catA/W} \catA\times \catA^\vee)$, $
\calF_{w,v}\mapsto \tilde w_*\calF_{w,v}$. 
\item $\Coh^{ W^\dyn}(\catA\times_{\catA/W}\catA\times \catA^\vee) \to \Coh(\catA\times_{\catA/W} \catA\times \catA^\vee\times_{\catA^\vee/W^\dyn }\catA^{\vee})$, $\calF_{v}\mapsto \tilde v^\dyn_*\calF_{v}$. 
\end{enumerate}
\end{lemma}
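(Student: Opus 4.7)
The plan is to imitate the proof of Lemma~\ref{lem:monoidal_fun} almost verbatim, with the additional tensor factor (the one not touched by the functor) playing a passive role via an identity map. Both parts reduce, on individual graded pieces, to the same cartesian-square base-change computation that drives Lemma~\ref{lem:monoidal_fun}, after taking a product with the identity on the passive factor.

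For part~(1), I would fix homogeneous components $\calF_{w,v}$ and $\calG_{x,y}$ in $\Coh^{W\times W^\dyn}(\catA\times\catA^\vee)$, and then expand both sides of the desired isomorphism $\phi(\calF_{w,v}\tensor\calG_{x,y})\simeq \phi(\calF_{w,v})\tensor \phi(\calG_{x,y})$. The monoidal product on the source introduces the twist $(w^{-1})^*(v^{-1})^{\dyn*}$; on the target, the monoidal structure combines the convolution \eqref{eqn:conv} along the $\catA$-direction with the $(v^{-1})^{\dyn*}$-twist along the $\catA^\vee$-direction. The $(v^{-1})^{\dyn*}$-twists on the two sides match identically and factor out, reducing the problem to the $\catA$-side identity already established in Lemma~\ref{lem:monoidal_fun}, now carried out on $\catA\times\catA^\vee$ with $\widetilde w$ replaced by $\widetilde w\times\id_{\catA^\vee}$. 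Since taking the product of the cartesian square appearing in the proof of Lemma~\ref{lem:monoidal_fun} with $\id_{\catA^\vee}$ preserves cartesianness and commutes with pushforward and pullback, the same diagram chase yields the desired isomorphism.

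For part~(2), the argument is symmetric, with the roles of $(\catA,W)$ and $(\catA^\vee,W^\dyn)$ interchanged: the active cartesian square is the $\catA^\vee$-analog of the one used in Lemma~\ref{lem:monoidal_fun}, the map $\widetilde w$ is replaced by $\widetilde{v}^\dyn$, and the passive factor is now $\catA\times_{\catA/W}\catA$ rather than $\catA^\vee$. After factoring out the passive identity map, the same base-change identity applies and gives the claim.

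The only mild obstacle is the bookkeeping: one has to verify that the $W^\dyn$-twist on the $\catA^\vee$-factor commutes in the appropriate sense with the pushforward and pullback along the various maps between copies of $\catA$ (and vice versa in part~(2)). This commutation holds automatically because the two actions live on disjoint factors of the ambient product, so the projection formula and flat base change split into independent identities on each factor, and the diagrammatic argument of Lemma~\ref{lem:monoidal_fun} transfers without modification.
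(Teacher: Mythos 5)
Your proposal is correct and matches the paper's intent precisely: the paper omits these proofs as ``similar'' to Lemma~\ref{lem:monoidal_fun}, and you have correctly identified the mechanism by which the argument transfers, namely that the passive factor ($\catA^\vee$ in part~(1), $\catA\times_{\catA/W}\catA$ in part~(2)) rides along via $\id$, the relevant cartesian square remains cartesian after taking product with the identity, and the twist $(v^{-1})^{\dyn*}$ commutes with $\tilde x_*$ because they act on disjoint factors of the product.
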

Without causing any confusion, we denote all of these functors (as well as their localizations explained below) by $\phi$.

\subsection{Pushforward}
\begin{lemma}\label{lem:lax}
The pushforward functor $\pi_*: \Coh(\catA\times_{\catA/W}\catA)\to \Coh(\catA/W)$ is lax monoidal, with  the target  endowed with the usual tensor structure of coherent sheaves. 
Similar for the followings:
\begin{enumerate}
\item  $ \Coh(\catA\times_{\catA/W}\catA\times\catA^\vee\times_{\catA^\vee/W^\dyn }\catA^\vee)\to \Coh(\catA/W\times\catA^\vee\times_{\catA^\vee/W^\dyn }\catA^\vee)$;
\item $ \Coh^{W^\dyn }(\catA\times_{\catA/W}\catA\times\catA^\vee)\to \Coh^{W^\dyn }(\catA/W\times\catA^\vee)$. 
\end{enumerate}
\end{lemma}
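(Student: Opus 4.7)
The plan is to construct, for any pair $\calF, \calF'\in \Coh(\catA\times_{\catA/W}\catA)$, a natural coherence morphism $\pi_*\calF\otimes_{\catA/W}\pi_*\calF'\to \pi_*(\calF\star\calF')$ and a unit map $\calO_{\catA/W}\to \pi_*\phi(\calO_{\catA})$, and then to verify the associativity and unit axioms. The key observation is that $\Delta_2^*(\calF\boxtimes\calF')$ is scheme-theoretically supported on the triple fiber product $Z:=\catA\times_{\catA/W}\catA\times_{\catA/W}\catA$, so the convolution is naturally identified with $\calF\star\calF' = p_{13,*}(p_{12}^*\calF\otimes p_{23}^*\calF')$, where now $p_{ij}\colon Z\to \catA\times_{\catA/W}\catA$ are the three projections. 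Writing $q\colon Z\to\catA/W$ for the structure map, which coincides with $\pi\circ p_{ij}$ for each $ij$, one obtains $\pi_*(\calF\star\calF') = q_*(p_{12}^*\calF\otimes p_{23}^*\calF')$.

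The coherence morphism is then assembled in two steps. First, the general fact that $q_*$ is lax symmetric monoidal with respect to the usual tensor product (since its left adjoint $q^*$ is strong monoidal) supplies the natural transformation
\[q_*(p_{12}^*\calF)\otimes_{\catA/W} q_*(p_{23}^*\calF') \to q_*(p_{12}^*\calF\otimes p_{23}^*\calF').\]
Second, the units of the adjunctions $p_{ij}^*\dashv p_{ij,*}$ furnish morphisms $\pi_*\calF \to \pi_*p_{12,*}p_{12}^*\calF = q_*p_{12}^*\calF$ and similarly $\pi_*\calF'\to q_*p_{23}^*\calF'$. Composing the two steps produces the desired lax structure morphism. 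The unit $\calO_{\catA/W}\to \pi_*\phi(\calO_{\catA})$ comes from the unit of $\pi_1^*\dashv\pi_{1,*}$ for the finite quotient $\pi_1\colon\catA\to\catA/W$, since $\phi(\calO_{\catA}) = \Delta_*\calO_{\catA}$ and $\pi\circ\Delta=\pi_1$.

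To check associativity I would compute both $(\calF\star\calF')\star\calF''$ and $\calF\star(\calF'\star\calF'')$ on the quadruple fiber product as $p_{14,*}(p_{12}^*\calF\otimes p_{23}^*\calF'\otimes p_{34}^*\calF'')$, and then deduce the pentagon axiom for the lax structure from the analogous pentagon for the tensor-product lax structure on the pushforward along the structure map of the quadruple product, together with naturality of the adjunction units. The triangle axioms follow similarly, using that the diagonal $\Delta\colon\catA\to\catA\times_{\catA/W}\catA$ provides a section of both $p_{12}$ and $p_{23}$. The two variants (1) and (2) are proved by the identical argument applied to the appropriate fiber products; the $W^\dyn$-equivariance in (2) is preserved automatically since every map entering the construction is built from $W^\dyn$-equivariant pullbacks, pushforwards, and adjunction units.

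I expect the main obstacle to be purely bookkeeping, particularly verifying that the identification of the convolution as a pushforward-of-tensor product interacts associatively when iterated to the quadruple product; this requires correctly matching up the various base-change isomorphisms between cartesian squares built from the $p_{ij}$. No flatness, properness, or derived machinery is needed beyond the fact that the quotient $\catA\to\catA/W$ is finite, so that all the maps $p_{ij}$ and $q$ that appear are themselves finite and the pushforwards remain within $\Coh$.
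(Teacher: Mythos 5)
Your proof is correct and takes essentially the same approach as the paper: both construct the lax structure morphism from adjunction units that move the computation to the triple fiber product $Z=\catA\times_{\catA/W}\catA\times_{\catA/W}\catA$, together with a base-change/Künneth identification. Your repackaging of the convolution as $p_{13,*}(p_{12}^*\calF\otimes p_{23}^*\calF')$ on $Z$, and then factoring the coherence morphism through the free lax structure of $q_*$ and the units of $p_{ij}^*\dashv p_{ij,*}$, amounts to the same commutative diagram the paper draws (with $r:Z\inj X$ the map into the four-fold fiber product playing the combined role of your two adjunction units), but makes the verification of associativity somewhat more transparent.
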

\begin{proof}
We only show the conclusion for the functor $ \Coh(\catA\times_{\catA/W}\catA)\to \Coh(\catA/W)$. The others are similar. 

We have the following commutative diagram where the right square is Cartesian:
\[\xymatrix{
\catA\times_{\catA/W}\catA\times_{\catA/W}\catA\ar[r]^-r\ar@/^20pt/[rr]^-{\Delta_{23}}\ar[d]_-{p_{13}}&X\ar[d]^-b\ar[r]^-a &\catA\times_{\catA/W}\catA\times\catA\times_{\catA/W}\catA\ar[d]_{\pi\times\pi}\\
\catA\times_{\catA/W}\catA\ar[r]_-{\pi}&
\catA/W\ar[r]_-{\Delta}&\catA/W\times \catA/W.
}\]
The map $r$ is induced by the universal property of the cartesian square. 
The adjunction $\id\to r_*r^*$ induces  a sequence of natural transformations 
\[
\Delta^*(\pi\times \pi)_*=b_*a^*\to b_*r_*r^*a^*=\pi_*(p_{13})_*\Delta_{23}^*.
\]
It is then straightforward to verify that this natural transformation satisfies the Lax axioms.
\end{proof}
For the sake of clarity, we spell out the map 
\[ \pi_* \phi(\calF_w)\otimes\pi_* \phi(\calG_v)\to \pi_*\phi(\calF_w\star\calG_v)\]
for $\pi_*\phi:\Coh^W(\catA)\to \Coh(\catA/W)$. Given $\calF_w$ and $\calG_v$ of degree $w,v$ in $\Coh^W(\catA)$, we have the composition
\[
\pi_*\phi\calF_w\otimes \pi_*\phi\calG_v\to \pi_*(\phi(\calF_w)\star \phi(\calG_v))=\pi_*\phi(\calF_w\star\calG_v)=\pi_*\phi(\calF_w\otimes(w^{-1})^* \calG_v). 
\]
For this reason, with $\calO_{\catA}\rtimes W$ defined in \eqref{eqn:group_alg},  $\pi_*(\calO_{\catA}\rtimes W)$ in $\Coh(\catA/W)$ is the usual twisted group algebra, the section of which on an open subset $U$ is written as \[\sum_{w\in W}a_w\delta_w,\]
where $\de_w$ is to indicate the degree. The  twisted product is written as 
\[
a_w\de_w b_v\de_v=a_w\cdot {}^w{b_v}\de_{wv}.
\]
Here for an element $w\in W$ and a function $f$ on $U\subseteq \catA$, we denote the image of $f$ under the map $w^{-1}:\calO(U)\to \calO(w^{-1}U)$ by ${}^wf$.

\subsection{Localization functors}\label{subsec:loc}
For each $\alpha\in \Phi$,
let $D_\al$ be the hyperplane of $\catA$ defined by $\al$, i.e., $D_\al=\ker \chi_\al$, and let $D_{ \hbar, \al}$ be the divisor of $\catA$ defined by $\chi_\al-\hbar$.  We also view $D_\al$ and $D_{\hbar,\al}$ as divisors of $\catA\times \catA^\vee$.  Let $J_{\hbar,\al}$ be the sheaf of ideals of $D_{\hbar, \al}$.
Let $\catA^\reg\subseteq\catA$ be the complement of  $\cup_{\al\in\Phi} (D_\al\cup D_{ \hbar, \al})$ and denote the embedding $j:\catA^\reg\to \catA$.  
Similarly, define $D_{\al^\vee}$ and $D_{\hbar,\al^\vee}$ as divisors of $\catA^\vee$, and also view them as divisors of $\catA\times \catA^\vee$. Define $(\catA^\vee)^\reg$ similarly, and denote $j^\vee:(\catA^\vee)^\reg\to \catA^\vee$. The open subsets $\catA^\reg$ and $(\catA^\vee)^\reg$ are invariant respectively under $W$ and $W^\dyn$.  Hence, the categories $\Coh^W(\catA^\reg)$ and $\Coh(\catA^\reg\times_{\catA/W} \catA^\reg)$ have induced monoidal structures, so that the localization functors
\[\Coh^W(\catA)\to \Coh^W(\catA^\reg)\]  and \[\Coh(\catA\times_{\catA/W} \catA)\to \Coh(\catA^\reg\times_{\catA/W} \catA^\reg)\] are both monoidal functors. 
Similar for $\catA^\vee$, and other categories involved in Lemma~\ref{lem:mon_fun_others}.

We have the following commutative diagram of monoidal categories
\[\xymatrix{\Coh^{W}(\catA^{\reg})\ar[r]_-{\cong} & \Coh^{}(\catA^\reg\times_{\catA/W}\catA^\reg)\\
\Coh^{W}(\catA)\ar[u]\ar[r]^-\phi & \Coh^{}(\catA\times_{\catA/W} \catA )\ar[u]. },
\]
where the vertical functors are localization functors. Notice that the top row is an equivalence of categories because the action of $W$ on $\catA^{\reg}$ is free.
Similarly, we have the following commutative diagram of monoidal categories with top row being  an equivalence of categories
\[
\xymatrix{\Coh^{W\times W^\dyn}(\catA^{\reg}\times (\catA^\vee)^\reg)\ar[r]_-{\cong} & \Coh^{}(\catA^\reg\times_{\catA/W}\catA^\reg\times( \catA^\vee)^{\reg}\times_{\catA^\vee/W^\dyn } (\catA^\vee)^{\reg})\\
\Coh^{W\times W^\dyn}(\catA\times \catA^\vee)\ar[u]\ar[r]^-\phi & \Coh^{}(\catA\times_{\catA/W} \catA\times \catA^\vee\times_{\catA^\vee/W^\dyn }\catA^\vee )\ar[u]. }.
\]

We also  have a natural isomorphism 
\[\calO_{\catA^\reg}\rtimes W\cong \phi(\calO_{\catA^\reg\times_{\catA/W} \catA^\reg}).\]
The residue conditions \cite{GKV97} define a subalgebra object of $j_*\calO_{\catA^\reg}\rtimes W$
in the category of quasi-coherent sheaves
$Q\Coh(\catA\times_{\catA/W} \catA)$, which we denote by  $\calH$. 
We refer $\calH$ as the GKV algebra, an algebra object in $\Coh(\catA\times_{\catA/W} \catA)$. This construction is reviewed below in Remark~\ref{rem:GKV}. 

\section{Residue definition of the dynamical Hecke algebra}
We define the elliptic Hecke algebra with dynamical parameters.
This algebra is not only the key ingredient in construction of the Fourier-Mukai functor, but also interesting on its own. The images of this algebra under various localization functors have already appeared in \cite{ZZ15, LZZ23}. However, for the purpose of representation theory, it is necessary to find a suitable integral structure, which we carry out in this section, following   \cite[Definition 1.3]{GKV97}.
\label{sec:res}
\subsection{The residues of rational sections of line bundles}
Let $U\subseteq \catA\times\catA^\vee$ be an open subset, with $f$ a rational section of $\calL$ on $U$, with a pole along $D_\alpha$   of order at most 1.  
The residue of $f$ along $D_\alpha$ is denoted by 
\[
\Res_\al (f)=(\vartheta(z_\al)f)|_{D_\al}.
\]
In words, taking residue  is the same as evaluating $\vartheta(z_\al) f$ at $z_\al=0$. In particular, the result is a rational section of $(\calL\otimes \calO(-D_\al))|_{D_\al}$ on $D_\alpha\cap U$.
If $a,b$ are rational sections of line bundles $\calL_1,\calL_2$, together with a canonical isomorphism $\calL_1|_{D_\al}\cong \calL_2|_{D_\al}$, then $a|_{D_\al}+b|_{D_\al}$ is defined via this isomorphism. In this case, the following definition makes sense:
\[
\Res_\al(a+b):=\Res_\al(a)+\Res_\al (b). 
\]  
 
\begin{lemma}\label{lem:s_al}
\begin{enumerate}
\item We have canonical  isomorphisms $s_\al^*\bbS_{s_\al,e}\cong\bbS_{s_\al,e}^{-1}$ and  $\bbS_{w,v}|_{D_\al}\cong \bbS_{s_\al w, v}|_{D_\al}$. 
\item If $f_\al$ is a rational section of $\bbS_{s_\al,e}$, and $a$ is a rational section of $\bbS_{s_\al w,v}$, then ${}^{s_\al}a\cdot  f_\al$ is a rational section of $\bbS_{w,v}$. 
\end{enumerate}
\end{lemma}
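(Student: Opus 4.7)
The plan is to unpack the definition $\bbS_{w,v} = \bbL \otimes (w^{-1})^*(v^{-1})^{\dynst}\bbL^{-1}$ and reduce each claim to two elementary facts: $s_\al$ is an involution, and its restriction to $D_\al$ is the identity. Everything else is a matter of manipulating tensor factors.

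For the first isomorphism in (1), I would use $s_\al^{-1}=s_\al$ to write $\bbS_{s_\al,e}=\bbL\otimes s_\al^{*}\bbL^{-1}$. Applying $s_\al^{*}$ and using $s_\al^{*}\circ s_\al^{*}=\id^{*}$ on sheaves then gives $s_\al^{*}\bbS_{s_\al,e}=s_\al^{*}\bbL\otimes \bbL^{-1}$, which is canonically $\bbS_{s_\al,e}^{-1}=\bbL^{-1}\otimes s_\al^{*}\bbL$ via the symmetry of the tensor product.

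For the second isomorphism in (1), the key point is that since $s_\al$ fixes $D_\al$ pointwise (viewed as an automorphism of $\catA\times\catA^\vee$ acting on the first factor), for any coherent sheaf $\calG$ there is a canonical isomorphism $(s_\al^{*}\calG)|_{D_\al}\cong \calG|_{D_\al}$, namely $\id_{D_\al}^{*}(\calG|_{D_\al})$. Writing $(s_\al w)^{-1}=w^{-1}s_\al$ we get $\bbS_{s_\al w,v}=\bbL\otimes s_\al^{*}(w^{-1})^{*}(v^{-1})^{\dynst}\bbL^{-1}$, and restricting to $D_\al$ absorbs the outer $s_\al^{*}$, yielding $\bbS_{w,v}|_{D_\al}$. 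The dynamical factor $(v^{-1})^{\dynst}$ acts only on the $\catA^\vee$-factor and commutes freely with everything above; this is the only point requiring a brief sanity check.

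For (2), the argument is bookkeeping. Interpreting ${}^{s_\al}a$ as $s_\al^{*}a$, it is a section of $s_\al^{*}\bbS_{s_\al w,v}=s_\al^{*}\bbL\otimes (w^{-1})^{*}(v^{-1})^{\dynst}\bbL^{-1}$, while $f_\al$ is a section of $\bbL\otimes s_\al^{*}\bbL^{-1}$. In the tensor product ${}^{s_\al}a\cdot f_\al$, the factors $s_\al^{*}\bbL$ and $s_\al^{*}\bbL^{-1}$ cancel, leaving a section of $\bbL\otimes (w^{-1})^{*}(v^{-1})^{\dynst}\bbL^{-1}=\bbS_{w,v}$. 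I do not anticipate a real obstacle: the entire lemma is a direct translation of $s_\al^{2}=e$ and $s_\al|_{D_\al}=\id$ into isomorphisms of line bundles, and the only mildly error-prone step is tracking which factor of $\catA\times\catA^\vee$ each pullback acts on. Since $s_\al$ and $v^{\dynst}$ act on different factors they commute, and this causes no trouble.
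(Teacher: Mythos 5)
Your proof is correct and follows essentially the same route as the paper's: both arguments come down to the two facts you isolate, namely $s_\al^2=e$ (so $s_\al^*s_\al^*=\id$ on line bundles) and $s_\al|_{D_\al}=\id$ (so $(s_\al^*\calG)|_{D_\al}\cong\calG|_{D_\al}$ canonically). The only cosmetic difference is that the paper works at the level of the $\bbS_{w,v}$'s via the cocycle identity $\bbS_{s_\al w,v}\cong\bbS_{s_\al,e}\otimes s_\al^*\bbS_{w,v}$, whereas you expand fully into tensor products of $\bbL$ and its pullbacks; this is the same computation in slightly different bookkeeping.
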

\begin{proof}
(1).The first one follows from the definition $\bbS_{s_\al,e}=\bbL\otimes s_\al^*\bbL^{-1}$. For the second one, recall that  \[
\bbS_{s_\al w, v}\cong\bbS_{s_\al, e}\otimes s_\al^*\bbS_{w, v}.
\]
Note that $s_\al$ fixes $D_\al$, so 
\[
\bbS_{s_\al, e}|_{D_\al}=(\bbL\otimes s_\al^*\bbL^{-1})|_{D_\al}=\bbL|_{D_\al}\otimes (s_\al^*\bbL^{-1})|_{D_\al}\cong\bbL|_{D_\al}\otimes \bbL^{-1}|_{D_\al}=\calO|_{D_\al}.
\]
Similarly, $(s_\al^*\bbS_{w, v})|_{D_\al}\cong\bbS_{w, v}|_{D_\al}$. Therefore, $\bbS_{w,v}|_{D_\al}\cong \bbS_{s_\al w, v}|_{D_\al}$.

(2). The isomorphism of (1) gives 
\[
s_\al^*\bbS_{s_\al w,v}\cong s_\al^*\bbS_{s_\al,e}\otimes \bbS_{w,v}\cong \bbS_{s_\al,e}^{-1}\otimes \bbS_{w,v}.
\]
The conclusion then follows.
\end{proof}

Consequently,
if $f$ is a rational section of $\bbS_{w,v}$  and $g$ is a rational section of  $\bbS_{s_\al w,v}$, 
$\Res_\al(f+g)=\Res_\al(f)+\Res_\al(g)$ makes sense.
\subsection{The elliptic Hecke algebra with dynamical parameters}
Consider  $j^2:=j\times j^\vee:\catA^\reg\times\catA^{\vee\reg}\to \catA\times \catA^\vee$, and denote $\pi^2:\catA\times \catA^\vee\to \catA/W\times \catA^\vee/W^\dyn$.  

As in \cite[\S~2.7]{LZZ23}, we write a rational section of $\bbS$ as 
\begin{equation}\label{eqn:sum}
\sum_{w,v\in W}a_{w,v}\de_w\de_v^\dyn,
\end{equation}
where $a_{w,v}$ is a rational section of $\bbS_{w, v}$ and $\de_w,\de_v^\dyn$ are to denote its degree. We also write $\de_e$ as $1$, or $\id$ when acting on a module. 

\begin{definition}\label{def:res}
Let $\tilde \calH^\dyn$ be the subsheaf of $j^2_*(j^2)^*\bbS$, so that on any open subset $U\subseteq \catA\times \catA^\vee$,  $\tilde \calH^\dyn(U)$ consists of  $\sum_{w,v}a_{w,v}\de_w\de_v^\dyn\in j^2_*(j^2)^*\bbS(U)$ satisfying  conditions i), ii), and ii') below. Let $\calH^\dyn$ be the subsheaf of $\tilde \calH$ whose sections on $U$ satisfies the  condition iii) below. 
\begin{enumerate}
\item[i)]  The possible poles of $a_{w,v}$ are at $D_\al$ or $D_{\al^\vee}$, of orders at most 1, for finitely many $\al\in \Phi_+$. 
\item[ii)] $\Res_\al(a_{w,v}+a_{s_\al w, v})=0.$
\item[ii')] $\Res_{\al^\vee}(a_{w,v}+a_{w,s_\al v})=0$.
\item[iii)] For any $\al\in \Phi(w)$, as a rational section of $\bbS_{w,v}\otimes \tilde\calO(-\hbar\al)$,  $a_{w,v}\frac{\vartheta(z_\al)}{\vartheta(\hbar-z_\al)}$ is regular at $D_{\hbar,\al}$.
\end{enumerate}
\end{definition}

 The conditions on an expression \eqref{eqn:sum} defining $\tilde \calH^\dyn$ and $ \calH^\dyn$  are invariant under left multiplication actions of functions on $\catA\times\catA^\vee$. Hence  $\tilde\calH^\dyn$ and $\calH^\dyn$ are coherent sheaves on $\catA\times\catA^\vee$. Below we will consider properties of $\tilde\calH^\dyn$ and $\calH^\dyn$ are coherent sheaves. 
 Here we notice that  the projection $\catA\times\catA^\vee\to \catA/W\times\catA^\vee/W^\dyn$ is a finite morphism. 
 Recall that for a coherent sheaf on $\catA/W\times \catA^\vee/W^\dyn$, the structure of a coherent sheaf on $\catA\times\catA^\vee$ is equivalent to a module structure over $\pi^2_*\calO$.  Pushing-forward $\tilde\calH^\dyn$ and $\calH^\dyn$ makes them into  coherent sheaves  on $\catA/W\times \catA^\vee/W^\dyn$ with left multiplications  of $\pi^2_*\calO$. 
Similarly, the right multiplication of $\pi^2_*\calO$ on $\pi^2_*\tilde\calH^\dyn$ and $\pi^2_*\calH^\dyn$ defines another structure of $\tilde \calH^\dyn$ and $ \calH^\dyn$ as coherent sheave on $\catA\times\catA^\vee$. The left and right actions of $\pi^2_*\calO$ agree when restricted to $(\pi^2_*\calO)^{W\times W^\dyn}$, hence $\tilde \calH^\dyn$ and $ \calH^\dyn$  are
coherent sheaves on $\catA\times_{\catA/W} \catA\times \catA^\vee\times_{\catA^\vee/W^\dyn }\catA^\vee\to \catA\times \catA^\vee$.

\begin{remark}\label{rem:GKV}
Let $\pi:\catA\to \catA/W$.  Recall the algebra object 
$\calO_{\catA}\rtimes W$ in \eqref{eqn:group_alg}, as well as $j_*j^*\calO_{\catA}\rtimes W$.
Let  $\tilde\calH$ (resp. $\calH$) to be the subsheaf of $j_*j^*\calO_{\catA}\rtimes W$, so that 
on any open subset $U\subseteq \catA$,  $\tilde\calH(U)$ (resp. $\calH(U)$) consists of  $\sum_wa_w\de_w\in j_*j^*\calO_{\catA}\rtimes W(U)$ satisfying  conditions i) and ii)  (resp. i), ii), iii)). 
Here condition iii) in this setting yields: for any $\al\in \Phi(w)$, $a_w$ vanishes at $D_{\hbar, \al}$. 
The theorem \cite[Theorem 1.4]{GKV97} shows that $\tilde\calH$ and $\calH$ are both algebra objects, and since $\pi_*$ is lax monoidal, so $
\pi_*\tilde\calH$ and $\pi_*\calH$ are coherent sheaves of algebras (see, e.g., \cite[Proposition~3.29]{AM10}). 
Moreover,  \cite[Theorem 2.2, Theorem 4.4]{GKV97} give,
\[\pi_*\tilde \calH\cong \sEnd (\pi_*\calO_{\catA}), \quad \pi_*\calH=\{g\in \tilde\calH\mid g(\pi_*J_{\hbar})\subset \pi_*J_{\hbar}\}. \]
Here $J_{\hbar}$ is the sheaf of ideals $\prod_{\al}J_{\hbar, \al}$ corresponding to the divisor $\cup_\al D_{\hbar, \al}$.  
\end{remark}
Similarly, starting with the Langlands dual root datum, we have  $\tilde\calH^L$ and $\calH^L$.
The rest of this section establishes analogues of these statements in the presence of the dynamical parameters. 

\begin{theorem}\label{thm:mult}
The subsheaves $\tilde\calH^\dyn$ and $\calH^\dyn$ are  subsheaves of algebras of $\bbS$. Moreover,  $\pi_*^2\tilde\calH^\dyn$ and $\pi_*^2\calH^\dyn$ are subsheaves of algebras of $\pi_*^2\bbS$. 
\end{theorem}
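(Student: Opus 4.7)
The plan is to mimic the strategy of \cite[Theorem 1.4]{GKV97} for the GKV algebra $\tilde\calH$, with the crucial additional observation that conditions i), ii), iii) and condition ii') essentially decouple. Indeed, $W$ acts on $\catA\times\catA^\vee$ through the first factor and $W^\dyn$ through the second, so the two actions commute and the divisors $D_\al, D_{\hbar,\al}$ (cut out by functions of $z$) are distinct from the divisors $D_{\al^\vee}, D_{\hbar,\al^\vee}$ (cut out by functions of $\la$).

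First I would pick local sections $f=\sum_{w,v}a_{w,v}\de_w\de_v^\dyn$ and $g=\sum_{w',v'}b_{w',v'}\de_{w'}\de_{v'}^\dyn$ of $\tilde\calH^\dyn$, and use the multiplication in $\bbS$ to write the product $fg$ as a sum indexed by $(u,t)\in W\times W^\dyn$ whose coefficient $c_{u,t}$ is the sum over $ww'=u$, $vv'=t$ of $a_{w,v}\cdot (w^{-1})^*(v^{-1})^{\dyn*}b_{w',v'}$, a rational section of $\bbS_{u,t}$ by Lemma~\ref{lem:s_al}(2) and its analog on the dynamical side. Condition i) is then immediate from the fact that each $(w^{-1})^*(v^{-1})^{\dyn*}$ permutes the divisors $\{D_\al\}$ and $\{D_{\al^\vee}\}$ and preserves the order of the pole along each.

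Next, I would verify condition ii) for the product. Fix $\al\in\Phi$ and $u,t$; we need $\Res_\al(c_{u,t}+c_{s_\al u,t})=0$. Following the GKV computation, the terms in $c_{u,t}$ and $c_{s_\al u,t}$ pair up: those with $w'$ such that $s_\al w\cdot w'=s_\al u$ correspond to those with $w$ such that $w\cdot w'=u$. Because multiplication by elements of $W^\dyn$ affects only $\la$-functions and hence commutes with taking residues along $D_\al$, and because the $W^\dyn$-indexing is unaffected, one can factor each pair into a product ${}^{(w,v)}(\text{stuff})$ of something GKV-residue-cancelling in the $W$-index. The cancellation is exactly the GKV identity applied to a section of the fiber of $\bbS$ over $\catA^\vee$. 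By the perfect symmetry between the two factors, condition ii') follows by the same argument with the roles of $W$ and $W^\dyn$, $\al$ and $\al^\vee$ reversed. Condition iii), the vanishing modulo $J_{\hbar,\al}$ statement for $\al\in\Phi(w)$, is checked again by the GKV argument fiberwise over $\catA^\vee$, since $D_{\hbar,\al}$ only involves $z$-variables.

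The main obstacle I expect is the bookkeeping in condition ii): one must carefully match the twisting sections $(w^{-1})^*(v^{-1})^{\dyn*}\bbL^{-1}$ entering the isomorphisms $\bbS_{w,v}|_{D_\al}\cong \bbS_{s_\al w,v}|_{D_\al}$ across both the left and right factors of the product, to see that the GKV cancellation goes through unchanged at the level of sections of the twisted line bundle $\bbS_{u,t}$ rather than mere rational functions. Finally, the statement about $\pi^2_*\tilde\calH^\dyn$ and $\pi^2_*\calH^\dyn$ is a formal consequence of Lemma~\ref{lem:lax}, since $\pi^2_*$ is lax monoidal and therefore sends algebra objects to algebra objects.
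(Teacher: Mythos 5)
Your overall strategy -- decouple the $W$-conditions from the $W^\dyn$-conditions, reduce to \cite[Theorem 1.4]{GKV97} on each side, and invoke lax monoidality of $\pi^2_*$ for the pushforward statement -- is the same as the paper's. However, what you flag at the end as ``the main obstacle'' (matching the twisting sections so that the GKV cancellation survives at the level of sections of the line bundle $\bbS_{u,t}$ rather than of rational functions) is in fact a genuine gap in your write-up: you identify the difficulty but do not resolve it, and it is not merely routine bookkeeping, since all the defining conditions i)--iii), ii') are formulated for sections of the nontrivial line bundles $\bbS_{w,v}$, and the GKV theorem as stated applies to the untwisted group algebra $\calO\rtimes W$.

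The paper sidesteps this with one extra move that your proposal is missing. Fix a point $p=(z,\lambda)$, choose a local trivialization $\bbL_p\cong(\calO_{\catA\times\catA^\vee})_p$, and propagate it over the $W\times W^\dyn$-orbit of $p$. Under this trivialization the stalk $(\pi^2_*\bbS)_{\bar p}$ becomes $(\pi^2_*\calO)_{\bar p}\rtimes(W\times W^\dyn)$, i.e.\ the genuine twisted group algebra with no line-bundle twists remaining. Since conditions i), ii), iii) only constrain the $z$-variable and the $W$-index, while ii') only constrains the $\lambda$-variable and the $W^\dyn$-index, the stalks of $\tilde\calH^\dyn$ and $\calH^\dyn$ are then literally identified with stalks of the external tensor products $\tilde\calH\boxtimes\tilde\calH^L$ and $\calH\boxtimes\tilde\calH^L$. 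Closure under multiplication is now immediate from \cite[Theorem 1.4]{GKV97} applied in each factor, with nothing left to check. You should add this trivialization step; without it, the cancellation argument you sketch in condition ii) is incomplete.
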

\begin{proof} 
If suffices to prove the lemma by proving the conclusion for the stalks at the $W\times W^\dyn$-orbit of $p=(z,\la)\in \catA\times \catA^\vee$. 
We fix a local trivialization of $\bbL_p\cong (\calO_{\catA\times \catA^\vee})_{p}$, which induces a trivialization at each point of the orbit. Denote $\bar p=\pi^2(p)$. 
Notice that with the fixed trivialization, we have an isomorphism \[(\pi_*^2\bbS)_{\overline{p}}\cong (\pi_*^2\calO)_{\overline{p}}\rtimes (W\times W^\dyn),\] and hence $(\pi_*^2\tilde\calH^\dyn)_{\overline{p}}\cong \left(\pi_*^2(\tilde\calH\boxtimes \tilde\calH^L)\right)_{\overline{p}}$ and $(\pi_*^2\calH^\dyn)_{\overline{p}}\cong \left(\pi_*^2(\calH\boxtimes \tilde\calH^L)\right)_{\overline{p}}$.
Now it follows from  \cite[Theorem 1.4]{GKV97} that the first two conditions and the first three conditions all define  sheaves of subalgebras. 

By Lemma \ref{lem:lax}, the functor $\pi_*^2$  is lax monoidal, so $\pi_*^2 \tilde\calH^\dyn$ and $\pi_*^2\calH^\dyn$ are both sheaves of algebras. 
\end{proof}

As coherent sheaves on $\catA\times\catA^\vee$, $\tilde\calH^\dyn$ and $\calH^\dyn$ have filtrations by the Bruhat order of $W$. 
In terms of sections, for any $x\in W$, the subsheaf $\calF_{\leq x}\tilde\calH^\dyn\subseteq \tilde\calH^\dyn$ has sections on $U\subseteq \catA/W\times\catA^\vee/W$ consisting of linear combinations $\sum_{w,v}a_{w,v}\de_w\de_v^\dyn\in j^2_*(j^2)^*\bbS(U)$ with $w\leq x$. Similar for $\calF_{\leq x}\calH^\dyn$.
  
\begin{lemma}\label{lem:H_basic}
\begin{enumerate}
\item  The sheaves of algebras $\pi_*^2\tilde\calH^\dyn, \pi_*^2\calH^\dyn$ are generated by  $\calF_{\leq s_\alpha}$ with $\alpha$ simple roots. 
\item The sheaves $\tilde\calH^\dyn, \calH^\dyn$ are locally free.
\item The sheaves  $\pi_*^2\tilde\calH^\dyn, \pi_*^2\calH^\dyn, \pi_*^2\bbL$ are reflexive.
\end{enumerate}
\end{lemma}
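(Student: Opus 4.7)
The overall plan is to reduce all three assertions to known structural results for the GKV algebras $\tilde\calH, \calH$ (and their Langlands duals $\tilde\calH^L, \calH^L$), by means of the stalk-level identification established in the proof of Theorem~\ref{thm:mult}. Recall from that proof that, after fixing a local trivialization of $\bbL$ at the $W\times W^\dyn$-orbit of a point $p\in \catA\times\catA^\vee$, there is a stalk isomorphism $(\pi_*^2\tilde\calH^\dyn)_{\bar p}\cong (\pi_*^2(\tilde\calH\boxtimes \tilde\calH^L))_{\bar p}$, and similarly for $\calH^\dyn$. In each case the argument has the same shape: decompose the stalk into a $W$-factor and a $W^\dyn$-factor, invoke the corresponding GKV result on each, and reassemble.

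For (1), I would combine this identification with \cite[Theorem 1.4]{GKV97}, by which $\pi_*\tilde\calH$ is generated over $\pi_*\calO_\catA$ by its $W$-degree-$\leq s_\alpha$ pieces for $\alpha\in\Sigma$, and similarly for $\pi_*\calH$ and for the Langlands dual algebras. Both families of generators then lie inside $\bigcup_{\alpha\in\Sigma}\calF_{\leq s_\alpha}\tilde\calH^\dyn$: the $W$-side generators by construction, while the $W^\dyn$-side generators have trivial $W$-degree and thus sit in $\calF_{\leq e}\subset \calF_{\leq s_\alpha}$ for any simple $\alpha$. This establishes the generation claim.

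For (2), the same stalk identification reduces local freeness of $\tilde\calH^\dyn$ and $\calH^\dyn$ to that of $\tilde\calH$ and $\calH$, which is again part of the structure theory of \cite[Theorem 1.4]{GKV97} (compare the isomorphism $\pi_*\tilde\calH\cong \sEnd(\pi_*\calO_\catA)$ recalled in Remark~\ref{rem:GKV}). For (3), I would invoke that $\pi^2:\catA\times\catA^\vee\to \catA/W\times\catA^\vee/W^\dyn$ is a finite morphism whose source is smooth and whose target is normal (being a finite-group quotient of a smooth variety). The line bundle $\bbL$ and, by (2), the locally free sheaves $\tilde\calH^\dyn$, $\calH^\dyn$ are Cohen--Macaulay on $\catA\times\catA^\vee$; a finite pushforward of a Cohen--Macaulay sheaf to a normal variety remains Cohen--Macaulay, hence $S_2$, and combined with torsion-freeness this yields reflexivity.

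The main technical obstacle lies in part~(2): one must verify that the stalkwise decomposition into GKV factors is strong enough to transfer local freeness as sheaves on $\catA\times\catA^\vee$ (not merely over the coarser base $\catA/W\times\catA^\vee/W^\dyn$), and that the GKV algebra $\tilde\calH$ is genuinely locally free on $\catA$ itself. Once (2) is in hand, (1) and (3) follow cleanly by the template above.
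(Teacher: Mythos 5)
Your strategy matches the paper's on all three parts: localize at a point of the $W\times W^\dyn$-orbit, use a local trivialization of $\bbL$ to factor the stalk into a GKV piece and a Langlands-dual GKV piece, and invoke known structure theory on each side; then for (3) run a Cohen--Macaulay argument over the finite quotient $\pi^2$. The one genuine issue is exactly the one you flag yourself at the end: you never actually produce a reference establishing that $\tilde\calH$ and $\calH$ are locally free as sheaves on $\catA$ (not just on $\catA/W$). In \cite{GKV97} the relevant result is Lemma~2.8(i), not Theorem~1.4 --- the latter only establishes the algebra structure of $\tilde\calH, \calH$, and the $\sEnd$ isomorphism you cite from Remark~\ref{rem:GKV} lives over $\catA/W$, so by itself it does not give local freeness on $\catA$. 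The generation statement in (1) is likewise Lemma~2.8(ii) of \cite{GKV97}, not Theorem~1.4; the paper also points to \cite[Lemma~4.6]{ZZ15} as an alternative source. Once you quote Lemma~2.8 in place of Theorem~1.4, your argument for (1) and (2) closes, and your self-identified obstacle disappears. For (3), your chain (finite pushforward of a CM sheaf is CM, CM implies $S_2$, $S_2$ plus torsion-free implies reflexive) is correct and agrees in substance with the paper's, which packages the computation as Lemma~\ref{lem:MCM} --- a Grothendieck-duality argument that $\pi_*$ of a vector bundle under a finite quotient is maximal Cohen--Macaulay --- followed by the one-step Lemma~\ref{lem:ref}(2) (MCM implies reflexive); the latter is marginally cleaner than going through $S_2$ plus torsion-free, but the content is the same.
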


Here (1) means for each point $p\in \catA/W\times\catA^\vee/W$, the stalks of  $\pi_*^2\tilde\calH^\dyn, \pi_*^2\calH^\dyn$ at $p$ can be written as products of
elements of the form $a\de_\al^\dyn$ and $b\de_\al^\dyn+c\de_\al \de_\al^\dyn$. 
Notice also that (2) is equivalent to the associated graded sheaves $\gr\tilde\calH^\dyn $ and $\gr \calH^\dyn$ are locally free, since extensions of locally free sheaves are locally free. 

Note that $\catA/W\times \catA^\vee/W^\dyn$ is a weighted projective space, which may not be smooth. Assuming Lemma~\ref{lem:H_basic}(2), the sheaves $\pi_*^2\tilde\calH^\dyn, \pi_*^2\calH^\dyn$ are locally free if and only if  $\catA/W\times \catA^\vee/W^\dyn$ is smooth. (This fact is not used in the present paper. We give a proof in Appendix~\ref{sec:App} for the convenience of the readers).
Nevertheless, Lemma~\ref{lem:H_basic}(3) asserts that the sheaves $\pi_*^2\tilde\calH^\dyn, \pi_*^2\calH^\dyn$ are reflexive, a fact which suffices in order to prove Theorem \ref{thm:poly}. 
For the convenience of the readers, we collect some basic facts regarding reflexive sheaves in Appendix~\ref{sec:App}. Furthermore,  Lemma~\ref{lem:MCM}  is used in the proof of Lemma~\ref{lem:H_basic}. 

\begin{proof}[Proof of Lemma~\ref{lem:H_basic}]
Similar as in the proof of Theorem~\ref{thm:mult}, (1) and (2) are local questions. With the help of a local trivialization of $\bbL$, 
they follow from \cite[Lemma~2.8(ii)]{GKV97} and \cite[Lemma~2.8(i)]{GKV97} respectively (see also, \cite[Lemma~4.6]{ZZ15}).

For (3), 
notice the map $\pi^2:\catA\times \catA^\vee\to \catA/W\times \catA^\vee/W^\dyn$ is a quotient by a finite group.   Lemma \ref{lem:MCM} below  shows that $\pi_*^2\tilde\calH^\dyn, \pi_*^2\calH^\dyn, \pi_*^2\bbL$ are maximal Cohen-Macaulay, hence reflexive (Lemma~\ref{lem:ref}(2)). 
\end{proof}

For later use, we have the following. 
\begin{lemma}
\label{lem:flat}
The vector bundle $\calH$ on $\catA$ is flat. 
\end{lemma}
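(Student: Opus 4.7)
The plan is to show that $\calH$ is locally free as an $\calO_\catA$-module, from which flatness follows immediately since $\calH$ is coherent and $\catA$ is smooth. The strategy mirrors that of Lemma~\ref{lem:H_basic}(2) in the non-dynamical setting.

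First, I would introduce the analogue of the Bruhat filtration on $\calH$ exactly as in the paragraph preceding Lemma~\ref{lem:H_basic}: for each $x\in W$, let $\calF_{\leq x}\calH$ be the subsheaf of $\calH$ consisting of sections $\sum_w a_w\de_w$ supported in degrees $w\leq x$. This yields a finite filtration of $\calH$ by coherent $\calO_\catA$-subsheaves indexed by the Bruhat poset of $W$, whose top piece is $\calH$ itself.

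Next, I would reduce to a local computation. As in the proof of Theorem~\ref{thm:mult}, after fixing a local trivialization of the relevant line bundles at a point $p\in\catA$, the defining conditions i), ii), iii) of Remark~\ref{rem:GKV} become precisely those analyzed in \cite[Lemma~2.8(i)]{GKV97}. That lemma identifies each associated graded piece $\gr_w\calH$ with a line bundle twisted by divisors of the form $D_{\hbar,\al}$ for $\al\in \Phi(w)$, which is locally free.

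Finally, since the Bruhat filtration has finite length $|W|$ and each graded piece is locally free, an iterated extension argument gives that $\calH$ itself is locally free on $\catA$, and hence flat.

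The main obstacle is already resolved by \cite[Lemma~2.8(i)]{GKV97}, whose residue analysis was invoked in the proof of Lemma~\ref{lem:H_basic}(2); the only new verification is that the same analysis specializes cleanly when the dynamical parameters are dropped and the conditions ii') are absent. This is routine since the dynamical and non-dynamical residue conditions decouple at the level of stalks, as observed in the proof of Theorem~\ref{thm:mult}.
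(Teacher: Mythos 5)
Your proof establishes the wrong claim. The lemma already calls $\calH$ a vector bundle, so local freeness is not at issue; moreover every locally free sheaf is automatically flat as an $\calO_{\catA}$-module, so under your reading the statement would be vacuous and there would be nothing to prove. What the paper means by ``flat'' here is \emph{flat vector bundle} in the sense used throughout \S~\ref{sec:conclutions} (e.g.\ in the definition of $\Mod_{fl}\calH$ and in $\Coh_{fl}\catA$), i.e.\ a homogeneous bundle on the abelian variety --- one admitting a filtration whose graded pieces are degree-zero line bundles, equivalently one lying in the image of the Fourier--Mukai transform from finitely supported sheaves (cf.\ \cite[Proposition~11.8]{P}). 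This is a genuine restriction on a vector bundle, and it is exactly what is needed later for the monadicity argument in the equivalence $\Mod_{fl}\calH\simeq\Mod_{fin}\calH^L$.

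The paper's actual argument uses the Bruhat filtration, as you do, but the point is different: one checks that each graded piece $\gr_w\calH$ is a line bundle of degree zero --- concretely a tensor product of copies of $\calO(\{0\}-\{\hbar\})$ pulled back along various $\chi_\alpha$, which has degree $1-1=0$ on each elliptic-curve factor and hence carries a flat (translation-invariant) structure --- and then that an iterated extension of flat bundles is flat. Your reduction to \cite[Lemma~2.8(i)]{GKV97} shows local freeness of the graded pieces, but it never addresses their degree, which is the substantive content. To repair the proof you would need to identify the graded pieces explicitly as tensor products of degree-zero line bundles (rather than merely as locally free sheaves), and then invoke closure of flat bundles under extensions.
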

\begin{proof}
A iterated extension of finitely many flat bundles is flat. Hence it suffices to show each graded piece in $\gr\calH$ is flat. This can be done by induction on the Bruhat order, and the observation that the line bundle $\calO(\{0\}-\{\hbar\})$ is flat on an elliptic curve.
\end{proof}

\subsection{The polynomial representation}
Let  $U\subset \catA/W\times \catA^\vee/W^\dyn$ be an open subset. Let $g:=\sum_{w,v}a_{w,v}\de_w\de_v^\dyn$ be a  section of $\pi_*^2\tilde\calH(U)$, and $f$ be a  section of $\pi_*^2\bbL(U)$,  then  the action of $g$ on $f$ is defined as 
\[
g( f)=\sum_{w,v}a_{w,v}\cdot {}^{wv^\dyn}f.
\]
Then similar as in \cite[Lemma 3.1]{GKV97},  the residue conditions i) and ii) guarantee that the poles of $a_{w,v}$ will be cancelled in $g(f)$. Moreover, from the isomorphism
\[
\bbS_{w,v}\otimes (w^{-1})^*(v^{-1})^\dynst \bbL=\bbL,
\]
we see that $g( f)$ is a  section of $\pi_*^2\bbL(U)$. So we obtain a well defined map of sheaves
\[
\psi:\pi_*^2\tilde\calH^\dyn\to \sEnd(\pi_*^2\bbL).
\]
Observe that after applying $(j^2)^*$, the sheaves $\tilde\calH^\dyn$, $\calH^\dyn$, and $\bbS$ coincide, and the  map $\psi$ becomes an isomorphism. 
\begin{theorem}\label{thm:poly}
The map $\psi$ is an  isomorphism of sheaves
\[
\pi^2_*\tilde\calH^\dyn\to \sEnd(\pi^2_*\bbL).
\]
\end{theorem}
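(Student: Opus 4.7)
The plan is to reduce Theorem \ref{thm:poly} to \cite[Theorem 2.2]{GKV97} applied separately to the root datum and to its Langlands dual, via a local trivialization of $\bbL$ that decouples the two factors. Since $\pi^2_*\tilde\calH^\dyn$ and $\sEnd(\pi^2_*\bbL)$ are both coherent on $\catA/W\times\catA^\vee/W^\dyn$, it suffices to verify the isomorphism on stalks at each closed point $\bar{p}=\pi^2(p)$, with $p=(z,\lambda)\in\catA\times\catA^\vee$. I would work on a $W\times W^\dyn$-invariant formal neighborhood of the orbit of $p$ and fix a trivialization of $\bbL$ there, which simultaneously trivializes each $\bbS_{w,v}=\bbL\otimes(w^{-1})^*(v^{-1})^{\dyn*}\bbL^{-1}$.

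As in the proof of Theorem \ref{thm:mult}, this trivialization yields a decoupling
\[(\pi^2_*\tilde\calH^\dyn)_{\bar{p}}\cong (\pi_*\tilde\calH)_{\pi(z)}\otimes (\pi_{2*}\tilde\calH^L)_{\pi_2(\lambda)},\]
because the defining conditions i), ii), and ii') split into the GKV conditions defining $\tilde\calH$ in the variables of $\catA$ and those defining $\tilde\calH^L$ in the variables of $\catA^\vee$. Similarly,
\[(\pi^2_*\bbL)_{\bar{p}}\cong (\pi_*\calO_\catA)_{\pi(z)}\otimes (\pi_{2*}\calO_{\catA^\vee})_{\pi_2(\lambda)},\]
and since each tensor factor on the right is finite over the respective local ring of the quotient (combining Lemma \ref{lem:H_basic} with the analogue for $\bbL$), standard properties of $\sHom$ for finitely generated modules give
\[\sEnd(\pi^2_*\bbL)_{\bar{p}}\cong \sEnd(\pi_*\calO_\catA)_{\pi(z)}\otimes \sEnd(\pi_{2*}\calO_{\catA^\vee})_{\pi_2(\lambda)}.\]

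Under these identifications, the explicit formula $g(f)=\sum_{w,v}a_{w,v}\cdot{}^{wv^\dyn}f$ decomposes as the tensor product of the two GKV actions, and so $\psi$ becomes the tensor product of the isomorphisms supplied by \cite[Theorem 2.2]{GKV97} applied to $\tilde\calH$ and to $\tilde\calH^L$ at the respective stalks. This will establish the theorem.

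The main obstacle is the careful bookkeeping involved in the trivialization of $\bbL$. The $W\times W^\dyn$-action on $\bbL$ cannot in general be made trivial in a neighborhood of the orbit, so one must check that the cocycles it induces under the chosen trivialization are absorbed compatibly on both sides of $\psi$, in such a way that the decoupled $\psi$ matches exactly the tensor product of the two GKV actions. Once this compatibility is verified, the result follows at once from the two invocations of \cite[Theorem 2.2]{GKV97}.
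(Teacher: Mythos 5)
Your proposal takes a genuinely different route from the paper's proof, and it is worth comparing them carefully. The paper does \emph{not} attempt a closed-point decoupling of $\sEnd(\pi^2_*\bbL)$. Instead it works at \emph{codimension-one} points $\bar p$ of $\catA/W\times\catA^\vee/W^\dyn$: there, an adaptation of \cite[Lemma~3.5]{GKV97} gives surjectivity of $\psi_{\bar p}$ directly. The paper then passes to the global isomorphism by invoking the reflexivity of $\pi^2_*\tilde\calH^\dyn$ and $\pi^2_*\bbL$ (Lemma~\ref{lem:H_basic}(3), via MCM pushforward in the Appendix) together with Lemma~\ref{lem:ref}(1), which recovers a reflexive sheaf as the intersection of its stalks over height-one primes; injectivity is obtained from torsion-freeness plus the observation that $(j^2)^*\psi$ is an isomorphism. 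Reflexivity is thus a genuine ingredient of the paper's argument, which your proposal does not use at all.

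Your approach — trivializing $\bbL$ near an orbit and decoupling into a tensor product of two GKV situations, then citing \cite[Theorem~2.2]{GKV97} twice — follows the pattern used in the proof of Theorem~\ref{thm:mult}, but applying it to $\sEnd(\pi^2_*\bbL)$ carries more risk than you acknowledge. Two points need to be nailed down. First, the identification you write, $(\pi^2_*\tilde\calH^\dyn)_{\bar p}\cong(\pi_*\tilde\calH)_{\pi(z)}\otimes(\pi_{2*}\tilde\calH^L)_{\pi_2(\lambda)}$, is not literally the stalk of a box-tensor at a closed point; that stalk is a localization of this tensor product at the maximal ideal of $\calO_{\catA/W,\pi(z)}\otimes_\bbC\calO_{\catA^\vee/W^\dyn,\pi_2(\lambda)}$ corresponding to $\bar p$. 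The decoupling of $\sEnd$ similarly holds only after interchanging $\sHom$ with both tensoring by the flat second factor (which uses that coherent means finitely presented over a Noetherian local ring) and with the relevant localization; this can be done, but it is precisely the sort of bookkeeping the paper's codimension-one strategy sidesteps entirely. Second, you flag the cocycle issue as ``the main obstacle'' and leave it unresolved. In fact it is not a genuine obstruction: once $\bbL$ is trivialized as a line bundle on an entire neighborhood of the (finite) orbit, every $\bbS_{w,v}=\bbL\otimes(w^{-1})^*(v^{-1})^{\dyn*}\bbL^{-1}$ trivializes canonically and the algebra isomorphisms $\bbS_{w,v}\tensor\bbS_{x,y}\cong\bbS_{wx,vy}$ become the naive group-algebra products; this is exactly what makes the analogous step in Theorem~\ref{thm:mult} work. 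So the obstacle you name is surmountable, but you have not actually surmounted it, and the remaining $\sEnd$-of-box-tensor analysis is substantive. In short: your route plausibly works if carried out with full care, but the paper takes a cleaner path that trades the local decoupling of $\sEnd$ for reflexivity and a codimension-one reduction, and you should be aware that these are two genuinely different strategies rather than paraphrases of one another.
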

\begin{proof}
We have the following commutative diagram:
\[
\xymatrix{\pi_*^2\tilde\calH^\dyn\ar[r]^-\psi \ar[d]& \sEnd(\pi_*^2\bbL)\ar[d]_l\\
\cap _{\codim \bar p=1}(\pi_*^2\tilde\calH)_{\bar p}^\dyn\ar[r] ^-{\cap \psi_{\bar p}}& \cap_{ \codim \bar p=1} \sEnd(\pi_*^2\bbL)_{\bar p}}.
\]
Here the intersections are taken among all codimension-1 points $\bar p$ of the scheme $\catA/W\times \catA^\vee/W^\dyn$.
The sheaf $\pi_*^2\bbL$ is reflexive (Lemma~\ref{lem:H_basic}(3)), hence torsion-free. Consequently, $\sEnd(\pi_*^2\bbL)$ is torsion-free and hence the right vertical map $l$ is injective.
Since $\pi_*^2\tilde\calH^\dyn $ is reflexive (Lemma~\ref{lem:H_basic}(3)), the left vertical map  is an isomorphism  (Lemma~\ref{lem:ref}(1)). 
It follows from \cite[Lemma 3.5]{GKV97} that $\psi_{\bar p}$ is surjective. Therefore,  the left-bottom composition is surjective. 
Consequently, $l$ is surjective and hence an isomorphism. 
Again by Lemma \ref{lem:H_basic}, $\psi$ is a map between torsion-free sheaves. From the observation above, $(j^2)^*\psi$ is injective. Therefore, $\psi$ is injective, and hence  an isomorphism. 
\end{proof}

The sheaf 
\[
J_{\hbar, \bbL}:=(\otimes_{\calO}J_{\hbar,\al})\bbL, 
\]
is a subsheaf of modules of $\bbL$. It follows from \cite[Lemma 3.1]{GKV97} that condition iii) implies that $\psi(\pi_*^2\calH^\dyn)$ preserves $\pi_*^2J_{\hbar, \bbL}$. 
Moreover,  it follows from \cite[Theorem 2.2]{GKV97},
\begin{equation}\label{eq:end}\pi_*^2\calH^\dyn=\{g\in \tilde\calH^\dyn\mid g(\pi_*^2J_{\hbar, \bbL})\subset \pi_*^2J_{\hbar, \bbL}\}.
\end{equation}

\subsection{ The Demazure-Lusztig operators with dynamical parameters}
\label{subsec:DL}
For each $\alpha\in \Sigma$, we consider the dynamical DL operator  $T_\al=T^{z,\la}_\al$ (see \cite{LZZ23}): \[T_\al=\frac{\theta(\hbar)\theta(z_\al-\la_{\al^\vee})}{\theta(z_\al)\theta(\hbar-\la_{\al^\vee})}\de_\al^\dyn+\frac{\theta(\la_{\al^\vee})\theta(\hbar-z_\al)}{\theta(z_\al)\theta(\hbar-\la_{\al^\vee})}\de_\al\de_\al^\dyn.\]
It  is a rational section of $\bbS$ (\cite[Theorem 6.3]{LZZ23}), and it is easy to see that $T_\al$ is a global section of $\calH^{\dyn}|_{\catA\times(\catA^\vee)^{\reg}}$. 
Indeed, Condition i) is satisfied obviously. For Condition ii), we have 
\[
\Res_\al(\frac{\theta(\hbar)\theta(z_\al-\la_{\al^\vee})}{\theta(z_\al)\theta(\hbar-\la_{\al^\vee})}+\frac{\theta(\la_{\al^\vee})\theta(\hbar-z_\al)}{\theta(z_\al)\theta(\hbar-\la_{\al^\vee})})=(\frac{\theta(\hbar)\theta(z_\al-\la_{\al^\vee})}{\theta(\hbar-\la_{\al^\vee})}+\frac{\theta(\la_{\al^\vee})\theta(\hbar-z_\al)}{\theta(\hbar-\la_{\al^\vee})})|_{z_\al=0}=0.
\] 
Condition ii') can be checked similarly. 
Lastly, condition iii)  is vacuous on the coefficient of $\de_\al^\dyn$. The coefficient of $\de_\al^\dyn\de_\al$ multiply with $\frac{\theta(z_\al)}{\theta(\hbar-z_\al)}$ is equal to $\frac{\theta(\la_{\al^\vee})}{\theta(\hbar-\la_{\al^\vee})}$, which is regular at $D_{\hbar,\al}$.

 It is shown by Rim\'anyi and Weber that they satisfy the Weyl group relations, that is,  the braid relations and also $T_\al^2=1$. (See \cite[Proposition 4.11]{ZZ15} for a statement of the present form.)
Therefore, $T_w$ for any reduced word decomposition of $w\in W$ is well defined and is  a global section of $\calH^{\dyn}|_{\catA\times(\catA^\vee)^{\reg}}$.

\section{Fourier-Mukai transform in steps}
In this section we prove the main results of the present paper summarized in the Introduction. 
\subsection{Module categories}

The monoidal category $\Coh(\catA\times_{\catA/W} \catA)$ acts on the category $\Coh(\catA)$  via convolution, similar to \eqref{eqn:conv}.
 For clarity, let $p_i:\catA\times\catA\to \catA$ be the $i$th projection, $i=1,2$, and let $\Delta_{2}:\catA\times\catA\to \catA\times\catA\times\catA, (z_1,z_2)\mapsto (z_1,z_2,z_2)$. For a coherent sheaf $\calF$ on $\catA$ and $\calG$ on $\catA\times_{\catA/W} \catA$, define \[\calG\star\calF:=p_{1*}\Delta^*_{2}\calG\boxtimes\calF.\]

Recall that $\calH$ is an algebra object in $\Coh(\catA\times_{\catA/W} \catA)$. We define $\Mod \calH$ to be the category of $\calH$-modules in $\Coh(\catA)$. 
Notice this notion is equivalent to the category of coherent sheaves of $\pi_*\calH$ modules on $\catA/W$. Indeed, using the lax property of $\pi_*$ similar to Lemma~\ref{lem:lax}, any objects in the former category gives rise to an object in the later via $\pi_*$. Conversely, any $\pi_*\calH$-module has the action of the subsheaf of algebras $\pi_*\calO_{\catA}\inj \pi_*\calH$, which endows it the structure of a coherent sheaf on $\catA$ \cite{Hart}.

Define $\Mod_{fin} \calH$ to be the full subcategory of $\Mod \calH$, whose underlying coherent sheaf has zero-dimensional support in $\catA$. Define $\Mod_{fl} \calH$ to be the full subcategory of $\Mod \calH$, whose underlying coherent sheaf is a homogeneous vector bundle on $\catA$.

Similarly, the monoidal category $\Coh(\catA\times_{\catA/W} \catA\times\catA^\vee\times_{\catA^\vee/W^\dyn }\catA^\vee)$ acts on $\Coh(\catA\times\catA^\vee)$  via convolution. 
We define $\Mod \calH^{\dyn}$ to be  the category of $\calH^{\dyn}$-modules in $\Coh(\catA\times\catA^\vee)$.

\subsection{From GKV algebra to dynamical Hecke algebra}\label{subsec:GKV-dyn} 
Let $p:\catA\times\catA^\vee\to \catA$ and $q:\catA\times\catA^\vee\to \catA^\vee$ be the two projections. 
Let us consider the functor \[{(\bbL\otimes -)\circ p^*}:  \Coh(\catA)\to \Coh(\catA\times\catA^\vee).\]
Notice that the image of the functor ${(\bbL\otimes -)\circ p^*}$ only  consists  of coherent sheaves on $\catA\times\catA^\vee$ which are flat over $\catA^\vee$.

Let $\Mod\calH\to \Coh(\catA)$ and $\Mod\calH^{\dyn}\to \Coh(\catA\times\catA^\vee)$ be the forgetful functors. 
\begin{theorem}\label{thm:pullback}
There is a functor $\Mod\calH\to \Mod\calH^{\dyn}$ making the following diagram commutative
\[\xymatrix{
\Mod\calH\ar[rr]\ar[d]& &\Mod\calH^{\dyn}\ar[d]\\
 \Coh(\catA)\ar[rr]_-{(\bbL\otimes -)\circ p^*}& & \Coh(\catA\times\catA^\vee).
}\]
\end{theorem}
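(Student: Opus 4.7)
The plan is to construct the functor explicitly. On a morphism $\varphi\colon M\to N$ of $\calH$-modules, it will send $\varphi$ to $\id_{\bbL}\otimes p^{*}\varphi$, which is $\calH^{\dyn}$-equivariant by naturality of the action on objects. On objects, I will endow $\bbL\otimes p^{*}M$ with a $\calH^{\dyn}$-action that reduces, for $M=\calO_{\catA}$, to the polynomial representation of Theorem~\ref{thm:poly}; the commutativity of the forgetful diagram is then automatic.

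First I would define the action on the regular locus $\catA^{\reg}\times(\catA^{\vee})^{\reg}$. By \S\ref{subsec:loc}, there we have $\calH^{\dyn}|_{\reg}\cong\calO\rtimes(W\times W^{\dyn})$ and $\calH|_{\catA^{\reg}}\cong\calO_{\catA^{\reg}}\rtimes W$, so $M|_{\catA^{\reg}}$ is a $W$-equivariant quasi-coherent sheaf. The formula
\[
(w,v^{\dyn})\cdot(f\otimes p^{*}m):=({}^{wv^{\dyn}}f)\otimes p^{*}({}^{w}m),
\]
together with the natural $\calO$-action, manifestly defines a $\calH^{\dyn}|_{\reg}$-action on $(\bbL\otimes p^{*}M)|_{\reg}$, and agrees with Theorem~\ref{thm:poly} when $M=\calO_{\catA}$.

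The bulk of the proof is to extend this to a $\calH^{\dyn}$-action on all of $\bbL\otimes p^{*}M$. Concretely, for any local section $g=\sum a_{w,v}\de_{w}\de_{v}^{\dyn}$ of $\calH^{\dyn}$ satisfying the residue conditions i)--iii), I need the a priori rational expression
\[
g\cdot(f\otimes p^{*}m)=\sum a_{w,v}\cdot({}^{wv^{\dyn}}f)\otimes p^{*}({}^{w}m)
\]
to be a regular section of $\bbL\otimes p^{*}M$. Since $\pi^{2}_{*}(\bbL\otimes p^{*}M)$ is reflexive (by Lemma~\ref{lem:flat} and an argument parallel to Lemma~\ref{lem:H_basic}(3) via Lemma~\ref{lem:MCM}), the check reduces to codimension-1 points. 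The cancellation of poles at $D_{\al}$ follows from condition ii) by pairing the $(w,v)$ and $(s_{\al}w,v)$ contributions, with $p^{*}({}^{w}m)$ playing the role of an inert parameter, exactly as in Theorem~\ref{thm:poly}; the cancellation at $D_{\al^{\vee}}$ follows from ii'), using that $p^{*}m$ does not depend on $\la$; regularity at $D_{\hbar,\al}$ follows from iii), as in \cite{GKV97}.

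The main obstacle is the cancellation at $D_{\al}$, since the $W$-equivariant structure on $M$ is defined only over $\catA^{\reg}$, whereas the cancellation takes place at the generic point of $D_{\al}$, which lies outside $\catA^{\reg}$. The resolution is that at that generic point the comparison between ${}^{w}m$ and ${}^{s_{\al}w}m$ takes place on the punctured neighborhood through the $\calH$-module structure, and the residue condition ii) makes the first-order pole of $a_{w,v}+a_{s_{\al}w,v}$ vanish, yielding a regular product; this is the polynomial representation argument of \cite{GKV97} transferred to the $M$-twisted setting. Once regularity is established, associativity and the unit axiom follow formally from Theorem~\ref{thm:mult}, and functoriality is manifest from the formula.
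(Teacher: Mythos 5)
Your formula for the action agrees with the paper's after unwinding: the expression $\sum a_{w,v}\,({}^{wv^{\dyn}}f)\otimes p^{*}({}^{w}m)$ is indeed what the paper's $\Psi$ produces. However, the proof of regularity at $D_{\al}$ has a genuine gap, and you yourself half-notice it in the last paragraph without resolving it.

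The problem is the claim that $p^{*}({}^{w}m)$ ``plays the role of an inert parameter, exactly as in Theorem~\ref{thm:poly}.'' It is not inert: the $(w,v)$ and $(s_{\al}w,v)$ contributions carry \emph{different} tensor factors $p^{*}({}^{w}m)$ and $p^{*}({}^{s_{\al}w}m)$, both of which are only rational sections of $p^{*}M$ near $D_{\al}$. The residue condition ii) tells you that $a_{w,v}+a_{s_{\al}w,v}$ has no pole, but that does not make
$a_{w,v}\,F_{w}\otimes M_{w}+a_{s_{\al}w,v}\,F_{s_{\al}w}\otimes M_{s_{\al}w}$
regular, because one cannot factor the two terms through a common section of $p^{*}M$. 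You need an actual mechanism to trade the pole of $a_{s_{\al}w,v}$ against the ``difference'' $M_{s_{\al}w}-M_{w}$, and that difference is controlled only by the $\calH$-module structure on $M$, not by $W$-equivariance. Saying this ``takes place through the $\calH$-module structure'' is the right instinct, but you have not exhibited the rewriting that makes it happen.

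The paper supplies exactly this missing step by constructing the coproduct-like morphism $\Psi:\calH^{\dyn}\to(\calH^{\dyn}+\bbS)\otimes p^{*}\calH$ (Lemma~\ref{lem:coprod}). Writing a generator as $T=a+b\de_{\al}$ and choosing a local rational function $d$ with an order-one pole at $D_{\al}$ and an order-one zero at $D_{\hbar,\al}$, one rewrites
$\Psi(T)=T\otimes 1+\tfrac{b}{d}\,\de_{\al}\otimes d(\de_{\al}-\id)$.
Now $T\otimes 1$ acts regularly on $\bbL\otimes p^{*}M$ by Theorem~\ref{thm:poly} (with $p^{*}M$ genuinely inert here), $\tfrac{b}{d}\,\de_{\al}$ is a regular section of $\bbS$, and $d(\de_{\al}-\id)$ satisfies the GKV residue conditions and hence is a local section of $\calH$, which acts regularly on $M$. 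This decomposition is the content your proposal needs and does not have; without it, the claimed regularity of your ``a priori rational expression'' is unjustified. Once you interpolate this coproduct step, the rest of your outline (reduction to codimension one via reflexivity, conditions ii') and iii), associativity from Theorem~\ref{thm:mult}, functoriality on morphisms) goes through as you describe.
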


In words, let $\calF$ in $\Coh(\catA)$ be a module over $\calH$, then, $\bbL\otimes p^*\calF$ has the natural structure of a module over $\calH^\dyn $. The proof is carried out in the rest of \S~\ref{subsec:GKV-dyn}.
Roughly, we show this in two step: 1. construct a coproduct-like algebra homomorphism $\Psi:\calH^\dyn \to (\calH^\dyn+\bbS)\otimes p^*\calH$; 2. we show that this map induces an action of $\calH^\dyn$ on $\pi_*^2(\bbL\otimes p^*\calF)$. 
Cares need to be taken as the target of $\Psi$ does not have an obvious algebra structure.

\subsubsection{Step 1}
First, recall that by Lemma~\ref{lem:monoidal_fun}, $\bbS$ is a coherent sheaf on $\catA\times_{\catA/W} \catA\times \catA^\vee\times_{\catA^\vee/W^\dyn }\catA^{\vee}$ and $\calO\rtimes W$ is a coherent sheaf on $\catA\times_{\catA/W} \catA$.
Similar to \S~\ref{sec:conv}, the category \[\Coh\big( (\catA\times_{\catA/W} \catA\times \catA^\vee\times_{\catA^\vee/W^\dyn }\catA^{\vee})\times_{\catA/W}(\catA\times_{\catA/W}\catA)\big)\] has a convolution monoidal structure, in which 
$
\bbS\boxtimes (\calO\rtimes W)$
is an algebra object. 
Let
$\Delta_\catA:  \catA\times_{\catA/W} \catA\times \catA^\vee\times_{\catA^\vee/W^\dyn }\catA^{\vee}\to  (\catA\times_{\catA/W} \catA\times \catA^\vee\times_{\catA^\vee/W^\dyn }\catA^{\vee})\times_{\catA/W}\catA$
be the map $(a,b,c,d)\mapsto (a,b,c,d, a)$.
It is well known that $\Delta_\catA^*(\bbS\boxtimes (\calO\rtimes W))=\bbS\otimes p^*(\calO\rtimes W)$, and  
$\Delta_\catA^*(\calH^\dyn \boxtimes \calH)=\calH^\dyn \otimes p^*\calH $.

We consider  the projection to the first  factor  \[\proj:(\catA\times_{\catA/W} \catA\times \catA^\vee\times_{\catA^\vee/W^\dyn }\catA^{\vee})\times_{\catA/W}(\catA\times_{\catA/W}\catA)\to \catA\times_{\catA/W} \catA\times \catA^\vee\times_{\catA^\vee/W^\dyn }\catA^{\vee}.\]  Since $\Delta_\catA$ is a section of $\proj$, there is a natural transformation $\proj_*\to \Delta_\catA^*$. It is straightforward to verify that the functor $\proj_{*}$ is lax monoidal  (similar to Lemma~\ref{lem:lax}). Moreover, there is an algebra homomorphism
\[\bbS\to \proj_{*}\left(
\bbS\boxtimes (\calO\rtimes W)\right). \]
Considered as coherent sheaves on $\catA\times \catA^\vee$ via pushforward in \S~\ref{subsec:loc}, locally in terms of sections this morphism is given by 
\begin{equation}\label{eqn:psi}
 f\delta_w\delta^\dyn_v\mapsto  f\delta_w\delta^\dyn_v\otimes \delta_w, \quad w,v\in W.
\end{equation}

Let $j^2$ be the open embedding of the regular locus of $\catA\times_{\catA/W} \catA\times \catA^\vee\times_{\catA^\vee/W^\dyn }\catA^{\vee}$. Similar as in \S~\ref{subsec:loc}, 
we have the induced algebra homomorphism 
of quasi-coherent sheaves
\begin{equation}
\label{eq:psi}\Psi: j^2_*(j^2)^*
\bbS\to j^2_*(j^2)^*\proj_*\big(\bbS\boxtimes (\calO\rtimes W)\big).\end{equation}
Composing with the projection $j^2_*(j^2)^*\proj_*\big(\bbS\boxtimes(\calO\rtimes W)\big)\to j^2_*(j^2)^*\Delta_\catA^*\big(\bbS\boxtimes (\calO\rtimes W)\big)$, below we find the image  of the subsheaf $ \calH^\dyn \subseteq j^2_*(j^2)^*\bbS$. 
\begin{lemma}\label{lem:coprod}
The morphism $\Psi$ induces  a morphism of coherent sheaves on $\catA\times\catA^\vee$ \[\Psi:\calH^{\dyn}\to (\calH^{\dyn}+\bbS)\otimes p^*\calH.\]
\end{lemma}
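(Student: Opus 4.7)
The plan is to verify the stated inclusion $\Psi(\calH^\dyn) \subseteq (\calH^\dyn + \bbS) \otimes p^*\calH$ at the level of local sections. The target is a subsheaf of the ambient $j^2_*(j^2)^*(\bbS \otimes p^*(\calO \rtimes W))$, interpreted as the image of $(\calH^\dyn + \bbS) \otimes_{\calO} p^*\calH$ under the natural map. The composition after \eqref{eq:psi} already lands in this ambient sheaf, so the claim reduces to a local membership condition, which can be checked on stalks.

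Fix a local section $g = \sum_{w,v} a_{w,v}\de_w\de_v^\dyn$ of $\calH^\dyn$ satisfying conditions (i), (ii), (ii'), (iii) of Definition~\ref{def:res}. By formula \eqref{eqn:psi}, its image is $\Psi(g) = \sum_{w,v} a_{w,v}\de_w\de_v^\dyn \otimes \de_w$. To exhibit $\Psi(g)$ as a local section of $(\calH^\dyn + \bbS) \otimes p^*\calH$, I would change basis in the second factor. Using the GKV Demazure-Lusztig operators one builds a local triangular basis $\{H_u\}_{u \in W}$ of $\calH$ with $H_u = \de_u + (\text{lower Bruhat})$. Inverting this triangular relation gives $\de_w = \sum_{u \leq w} c_{w,u} H_u$ with rational coefficients $c_{w,u}$ whose only poles lie along the hyperplanes $D_\al$. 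Substituting yields
\[
\Psi(g) = \sum_u \Bigl(\sum_{w \geq u} c_{w,u} \sum_v a_{w,v}\de_w\de_v^\dyn \Bigr) \otimes H_u.
\]
Since each $H_u$ is a section of $\calH$, it remains to check that each coefficient in the first factor is a section of $\calH^\dyn + \bbS$.

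The main obstacle is this last verification: the coefficients $c_{w,u}$ introduce poles along $D_\al$, which must be counterbalanced by the residue conditions (ii) of $g$ together with the known residue structure of the DL basis $\{H_u\}$ in $\calH$. The crucial point is that conditions (i), (ii) of $\calH^\dyn$ in the $\catA$-direction exactly parallel the conditions (i), (ii) defining $\calH$, so the pairings $(w, s_\al w)$ used in (ii) are compatible with the change of basis; after cancellation, the resulting first-factor coefficients satisfy conditions (i), (ii), (ii') up to a regular remainder that lies in $\bbS$. This regular remainder is precisely what forces the appearance of $\calH^\dyn + \bbS$ rather than $\calH^\dyn$ alone. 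Condition (iii) at $D_{\hbar,\al}$ is preserved throughout: the DL basis respects these vanishings by condition (iii) of $\calH$, and condition (iii) of $\calH^\dyn$ for $g$ carries through term-by-term under the substitution. A final appeal to $\calO_{\catA\times\catA^\vee}$-linearity of $\Psi$ and of the change-of-basis completes the proof on a generating set supplied by Lemma~\ref{lem:H_basic}(1).
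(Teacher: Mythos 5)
The proposal does not go through as written, and the underlying strategy differs from the paper's. The paper reduces via Lemma~\ref{lem:H_basic}(1) to a single rank-one generator $T=a+b\de_\al$, rewrites $\Psi(T)=T\otimes 1+b\de_\al\otimes(\de_\al-\id)$, and then \emph{moves a scalar across the tensor}: choosing a rational function $d$ on $\catA$ with an order-one pole along $D_\al$ and an order-one zero along $D_{\hbar,\al}$, one has $b\de_\al\otimes(\de_\al-\id)=\frac{b}{d}\de_\al\otimes d(\de_\al-\id)$. The factor $d(\de_\al-\id)$ is then a regular section of $\calH$ (the GKV residue conditions (ii) and (iii) hold by inspection), and $\frac{b}{d}\de_\al$ is a regular section of $\bbS$ because the pole at $D_\al$ and the zero at $D_{\hbar,\al}$ of $b$ are cancelled by those of $d$. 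That single scalar transfer is the entire content of the argument.

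Your route attempts a global change of basis in the $p^*\calH$ factor and has two concrete problems. First, the claimed local triangular basis $\{H_u\}_{u\in W}$ of $\calH$ with $H_u=\de_u+(\text{lower Bruhat})$ does not exist near the divisors $D_{\hbar,\al}$. Condition (iii) of the GKV residue construction forces the leading coefficient of the degree-$u$ term of any section of $\calH$ to vanish along $D_{\hbar,\al}$ for $\al\in\Phi(u)$; in particular there is no section of $\calH$ of the form $\de_\al+a$ near $D_{\hbar,\al}$. If you normalize the actual Demazure--Lusztig generators (of the form $c_\al(\de_\al-\id)$ with $c_\al$ having a pole at $D_\al$ and a zero at $D_{\hbar,\al}$) so as to get leading coefficient $1$, you leave $\calH$. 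Correspondingly, the inverse-triangular coefficients $c_{w,u}$ acquire poles along $D_{\hbar,\al}$ as well as $D_\al$, contradicting your claim. Second, when those $c_{w,u}$ are transported across $\otimes_{\calO}$, the resulting first-factor coefficients become singular along $D_{\hbar,\al}$ and therefore do not land in $\bbS$, which is a (locally free hence regular) line bundle on each graded piece. The concluding verification (``after cancellation, the resulting first-factor coefficients satisfy conditions (i), (ii), (ii') up to a regular remainder that lies in $\bbS$'') is exactly the point that needs a precise computation, and as stated it would fail because of the $D_{\hbar,\al}$ singularities just described. You would need to rework the change of basis so that the scalars moved across the tensor have simultaneously a pole at $D_\al$ and a zero at $D_{\hbar,\al}$ (so that, after transfer, both factors remain regular); this is precisely what the paper's choice of $d$ accomplishes, but at the level of a single simple reflection, which makes the bookkeeping trivial.
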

Here the sum $\calH^{\dyn}+\bbS$ means the subsheaf in $j_*j^*\bbS$ generated by $ \calH^{\dyn}$ and $\bbS$.
\begin{proof}
By Lemma \ref{lem:H_basic}, it suffices to verify locally on sections of the form $T:=a+b\de_\al$.
Here $b$ is a rational section of $\bbS_{s_\al,e}$ and has an order-1 pole at $D_\al$, and an  order-1  zero at $D_{\hbar, \al}$.
Then, the map \eqref{eqn:psi} can be rewritten as 
\[\Psi(T)=a\otimes 1+ b\de_\al\otimes \de_\al= T\otimes 1+ b \de_\al\otimes (\de_\al-\id). \]
Locally, we write $b$ as $d\cdot \frac{b}{d}$ where $ \frac{b}{d}$ is a regular section of $\bbS_{s_\al,s_\al}$, and $d$ is a rational function on $\catA$ with  an order-1 pole at $D_\al$, and an  order-1  zero at $D_{\hbar, \al}$.
 Since the tensor is over $\calO_{\catA\times\catA^\vee}$, we have $ b\de_\al\otimes (\de_\al-\id)= \frac{b}{d}  \de_\al\otimes d(\de_\al-\id)$. 
Now $d(\de_\al-\id)$ satisfies the GKV residue condition and hence lands in $p^*\calH$, and $\frac{b}{d} \de_\al$ lands in $\bbS$.
\end{proof}

\subsubsection{Step 2}
We write $\Delta_\catA:\catA\times \catA^\vee\to (\catA\times \catA^\vee)\times_{\catA/W}\catA$, $(a,b)\mapsto (a,b, a)$. 
Denote the projection $\proj: (\catA\times \catA^\vee)\times_{\catA/W}\catA\to  \catA\times \catA^\vee$. Since $\proj\circ \Delta_{\catA}=\id$, there is a natural transformation $\proj_*\to \Delta_\catA^{*}$,  
Notice that for any  $\calF$ in $\Coh(\catA)$ and $\calG$
 in $\Coh(\catA\times \catA^\vee)$, clearly 
 \begin{equation}
\label{eq:Delta}\Delta_\catA^{*}(\calG\boxtimes \calF)=\calG\otimes p^*\calF,\end{equation}
which is a quotient of $\proj_{*}(\calG\boxtimes \calF)$.

Now let $\calF$ in $\Coh(\catA)$ be an module of $\calH$. 
Let $T:=a+b\de_\al$ be a local section of $\calH^\dyn$. Let us consider 
 \[T'=T\boxtimes 1+ \frac{b}{d}  \de_\al\boxtimes d(\de_\al-\id),\]
where $b,d$ are described as in the proof of Lemma~\ref{lem:coprod}. 
Then on any open subset of $(\catA\times\catA^\vee)\times_{\catA/W}\catA$ on which  $T'$ is well-defined as a regular section  of $(\calH^\dyn+\bbS)\boxtimes \calH$, we have a map
\[T':\pi^2_*\proj_*(\bbL\boxtimes \calF)\to \pi^2_*\proj_*(\bbL\boxtimes \calF).\]

\begin{lemma}
Let $T$ be a regular section  of $\calH^\dyn $ on an open subset of $\catA\times\catA^\vee$.  Then, on this open set, $T'$ induces a map on the quotients
\[\Psi(T):\pi^2_*(\Delta_\catA^{*}(\bbL\boxtimes \calF))\to \pi^2_*(\Delta_\catA^{*}(\bbL\boxtimes \calF)).\]
\end{lemma}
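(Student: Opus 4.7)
The plan is to construct $\Psi(T)$ on $\bbL\otimes p^*\calF=\Delta_\catA^*(\bbL\boxtimes\calF)$ directly as the pullback of $T'$ along $\Delta_\catA$, and then to verify that this operator genuinely descends from the $T'$-action on $\pi^2_*\proj_*(\bbL\boxtimes\calF)$ through the natural quotient map. The key ingredient is the natural transformation $\eta\colon\proj_*\Rightarrow\Delta_\catA^*$, coming from the unit $\id\to(\Delta_\catA)_*\Delta_\catA^*$ composed with $\proj\circ\Delta_\catA=\id$, together with its compatibility with tensor products.

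First I would invoke the identification $\Delta_\catA^*(\calG\boxtimes\calG')=\calG\otimes p^*\calG'$ from \eqref{eq:Delta} to pull back $T'\in(\calH^\dyn+\bbS)\boxtimes\calH$ to $T\otimes 1+\tfrac{b}{d}\de_\al\otimes d(\de_\al-1)\in(\calH^\dyn+\bbS)\otimes p^*\calH$, which is exactly $\Psi(T)$ from Lemma~\ref{lem:coprod}. This element acts on $\bbL\otimes p^*\calF$ as the tensor of module actions: the $(\calH^\dyn+\bbS)$-factor acts on $\bbL$ via the polynomial representation of $\calH^\dyn$ (Theorem~\ref{thm:poly}) together with the $\bbS$-module structure on $\bbL$, and the $p^*\calH$-factor acts on $p^*\calF$ by pullback of the given $\calH$-action. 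Regularity of each summand on the specified open set is immediate from the construction in Lemma~\ref{lem:coprod}: $T$ is regular by hypothesis, $\tfrac{b}{d}\de_\al$ is a regular $\bbS$-section, and $d(\de_\al-1)$ is a section of $\calH$.

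To see that this operator coincides with the one descending from $T'$ via the quotient map, I would invoke the monoidal compatibility of $\eta$. Namely, for any algebra object $\calA$ acting on a module $\calM$ on $(\catA\times\catA^\vee)\times_{\catA/W}\catA$, the diagram
\[\xymatrix{
\proj_*\calA\otimes\proj_*\calM\ar[r]\ar[d]_-{\eta\otimes\eta}&\proj_*(\calA\otimes\calM)\ar[r]\ar[d]^-{\eta}&\proj_*\calM\ar[d]^-{\eta}\\
\Delta_\catA^*\calA\otimes\Delta_\catA^*\calM\ar[r]^-{\cong}&\Delta_\catA^*(\calA\otimes\calM)\ar[r]&\Delta_\catA^*\calM
}\]
commutes, with horizontals being the (lax or strong) monoidal structure and the action. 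Applied to $\calA=(\calH^\dyn+\bbS)\boxtimes\calH$, $\calM=\bbL\boxtimes\calF$, and tracing $T'$ through, this yields the desired descent after $\pi^2_*$. The main obstacle I anticipate is the verification of this monoidal compatibility of $\eta$: it is a diagram chase that rests on $\Delta_\catA^*$ being strong monoidal (in contrast to the merely lax-monoidal $\proj_*$) and on the adjunction $(\Delta_\catA)_*\dashv\Delta_\catA^*$, with some book-keeping for the tensor products over $\calO_{\catA\times\catA^\vee}$ versus the structure sheaf of the fiber product.
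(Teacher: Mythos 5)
There is a genuine gap, and it sits in exactly the place the paper warns about. Your "Step 1" asserts that the element $\Psi(T) = T\otimes 1 + \tfrac{b}{d}\de_\al\otimes d(\de_\al - \id)$ "acts on $\bbL\otimes p^*\calF$ as the tensor of module actions," but that is precisely the well-definedness claim the lemma is asking you to prove, not something you may take for granted. The point is that the two factor actions are \emph{semilinear}: a local section $a\de_w\de_v^\dyn$ of $\calH^\dyn+\bbS$ sends $m\mapsto a\cdot{}^{wv^\dyn}m$, and a local section $b\de_w$ of $\calH$ sends $n\mapsto b\cdot{}^{w}n$, so the naive "tensor of actions" $m\otimes n \mapsto (a\cdot{}^{wv^\dyn}m)\otimes(b\cdot{}^{w}n)$ is an operator on $\pi^2_*\proj_*(\bbL\boxtimes\calF)$, and one must \emph{check} that it descends to the quotient $\pi^2_*\Delta_\catA^*(\bbL\boxtimes\calF)$, i.e. that it respects the relation $fm\otimes n = m\otimes fn$ for $f$ pulled back from $\catA$. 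Because of the twists ${}^{wv^\dyn}$ vs.\ ${}^{w}$ this is not automatic; it holds exactly because such $f$ satisfies ${}^{v^\dyn}f=f$. That is the whole content of the paper's proof, which simply computes $\Psi(T)(fm\otimes n) = \sum a\cdot{}^{wv^\dyn}f\cdot{}^{wv^\dyn}m\otimes{}^{w}n$ and $\Psi(T)(m\otimes fn) = \sum a\cdot{}^{wv^\dyn}m\otimes{}^{w}f\cdot{}^{w}n$, and observes they agree.

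Your "Step 2," invoking a monoidal compatibility of the natural transformation $\eta\colon\proj_*\Rightarrow\Delta_\catA^*$ for "any algebra object $\calA$ acting on a module $\calM$," cannot be applied with $\calA = (\calH^\dyn+\bbS)\boxtimes\calH$ and $\calM=\bbL\boxtimes\calF$, because $(\calH^\dyn+\bbS)$ is merely a sum of subsheaves of $j^2_*(j^2)^*\bbS$ and is not closed under the product — the paper flags this explicitly ("the target of $\Psi$ does not have an obvious algebra structure"). Likewise $\bbL\boxtimes\calF$ is not a module over any algebra object for which the abstract compatibility would apply (e.g.\ $\calF$ is a module over $\calH\subseteq j_*j^*(\calO_\catA\rtimes W)$, not over $\calO_\catA\rtimes W$). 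So the categorical shortcut you propose does not get off the ground, and you still need the element-level check that the paper carries out; once you do that check, Step 2 becomes superfluous.
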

\begin{proof}
It suffices to verify on a local section of $\pi^2_*(\Delta_\catA^{*}(\bbL\boxtimes \calF))$ of the form  $fm\otimes n=m\otimes fn$ with $f$ a function on $\catA$, $m$ a rational section of $\bbL$, and $n$ a rational section of $\calF$. We write $T$ as  $\sum_{v,w}a\de_w\de_v^\dyn$, so that $\Psi(T)=\sum_{v,w} a\de_w\de_v^\dyn\otimes \de_w$. We have 
\[
\Psi(T)(fm\otimes n)=\sum_{v,w}a\cdot {}^{wv^\dyn}f \cdot {}^{wv^\dyn}m\otimes{}^w n, \quad \Psi(T)(m\otimes fn)=\sum_{v,w}a\cdot {}^{wv^\dyn}m\otimes {}^{w}f\cdot {}^{w}n. 
\]
Since $f$ is a function on $\catA$, ${}^{v^\dyn}f=f$, so we have $\Psi(T)(fm\otimes n)=\Psi(T)(m\otimes fn)$. \end{proof}

Using \eqref{eq:Delta}, we see that a rational section $T$ of $\calH^\dyn$ induces a map 
\[
\Psi(T):\pi_*^2(\bbL\otimes p^*\calF)\to \pi_*^2(\bbL\otimes p^*\calF). 
\]This concludes the proof of Theorem~\ref{thm:pullback}. 
\begin{example}
We can write the action of $\calH^{\dyn}$ on $\bbL\otimes p^*\calO_{\catA}$, on the level of rational sections as follows. 
Let $T:=a\de_\al^\dyn+b\de_\al\de_{\al}^\dyn$. Let $f$ be a rational section of  $p^*\calO_{\catA}$ (so that $s_\al^\dyn$ acts trivially), and $g$ be a rational section of $
\pi^2_*\bbL$,  then the map $\Psi$ induces \eqref{eqn:psi}:
\[\Psi(T)( g\otimes f ):=T(g)\otimes f+b\cdot {}^{s_\alpha^\dyn s_\alpha}g\otimes \big({}^{s_\alpha}f-f\big).\]
 Notice that $\bbL$ is a module over $p^*\calO_{\catA}$, hence we have the action map $\bbL\otimes p^*\calO_{\catA}\to\bbL$. So if $f$ is a rational section of $p^*\calO_{\catA}$, we have 
\begin{align*}
\Psi(T)(g\otimes f)&=(T\otimes 1+b\de_\al \de_\al^\dyn\otimes (\de_\al-\id))(g\otimes f)\\
&= (a\cdot {}^{s_\al ^\dyn}g+b\cdot {}^{s_\al s_\al ^\dyn}g)\cdot f+b\cdot {}^{s_\al s_\al ^\dyn}g\cdot ({}^{s_\al }f-f)\\
&=a\cdot f\cdot {}^{s_\al ^\dyn}g+b\cdot {}^{s_\al}f\cdot{}^{s_\al s_\al ^\dyn}g\\
&=T(fg).
\end{align*}
In other words, the multiplication map  $\bbL\otimes p^*\calO_{\catA}\to \bbL$ is an $\calH^\dyn$-module homomorphism with the action on the domain defined via $\Psi$.
\end{example}

\subsection{From dynamical Hecke algebra to Langlands dual GKV algebra}
We consider the functor going from the module category of the dynamical Hecke algebra to that of the Langlands dual GKV algebra. 
\begin{lemma}
\label{thm:Weyl}
In the category $\Coh^{W^\dyn }(\catA\times_{\catA/W}\catA\times(\catA^\vee)^{\reg})$, there is an algebra homomorphism \[\calO_{(\catA^\vee)^{\reg}}\rtimes W\to q_*\calH^{\dyn}|_{\catA\times_{\catA/W}\catA\times(\catA^\vee)^{\reg}}\] sending $\de_\alpha^\dyn $ to $T_\alpha$.
\end{lemma}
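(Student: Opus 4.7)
The plan is to use the standard presentation of the twisted group algebra $\calO_{(\catA^\vee)^{\reg}}\rtimes W$ as the $\calO_{(\catA^\vee)^{\reg}}$-algebra generated by the simple reflections $\{\de_\alpha^\dyn\}_{\alpha\in\Sigma}$ subject to three families of relations: (i) the braid relations, (ii) the involutivity $(\de_\alpha^\dyn)^2=1$, and (iii) the semidirect-product commutation $\de_\alpha^\dyn\cdot f={}^{s_\alpha^\dyn}f\cdot\de_\alpha^\dyn$ for $f$ a local section of $\calO_{(\catA^\vee)^{\reg}}$. By the universal property of this presentation, producing the desired map reduces to defining it on generators via $\de_\alpha^\dyn\mapsto T_\alpha$, extending $\calO_{(\catA^\vee)^{\reg}}$-linearly, and verifying that the $T_\alpha$ satisfy (i), (ii), and (iii) inside $q_*\calH^{\dyn}|_{\catA\times_{\catA/W}\catA\times(\catA^\vee)^{\reg}}$.

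The well-definedness of the target section and the $W^\dyn$-grading are immediate from \S\ref{subsec:DL}: $T_\alpha$ was shown there to be a global section of $\calH^{\dyn}|_{\catA\times(\catA^\vee)^{\reg}}$, and both monomials $a\de_\alpha^\dyn$ and $b\de_\alpha\de_\alpha^\dyn$ comprising $T_\alpha$ live in $W^\dyn$-degree $s_\alpha^\dyn$, so the assignment preserves the $W^\dyn$-grading in the category $\Coh^{W^\dyn}(\catA\times_{\catA/W}\catA\times(\catA^\vee)^{\reg})$. For relations (i) and (ii), I would cite the Rim\'anyi--Weber identities in the form of \cite[Proposition~4.11]{ZZ15}, exactly as the passage preceding the lemma does; this is the one nontrivial input, and it is imported as a black box.

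Finally, (iii) is a direct formal verification: writing $T_\alpha=a\de_\alpha^\dyn+b\de_\alpha\de_\alpha^\dyn$, note that for any local section $f$ of $\calO_{(\catA^\vee)^{\reg}}$ one has $\de_\alpha\cdot f=f\cdot\de_\alpha$, since $f$ depends only on $\catA^\vee$-coordinates, while $\de_\alpha^\dyn\cdot f={}^{s_\alpha^\dyn}f\cdot\de_\alpha^\dyn$. Commuting $f$ past both monomials of $T_\alpha$ yields $T_\alpha\cdot f={}^{s_\alpha^\dyn}f\cdot T_\alpha$, which is precisely (iii). The genuinely substantive step is the braid identity for the $T_\alpha$, which is where the Rim\'anyi--Weber calculation does the heavy lifting; the rest of the argument is essentially a bookkeeping check and a universal property.
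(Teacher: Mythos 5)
Your proof is correct and structurally parallel to the paper's: both reduce to checking the Weyl-group relations among the $T_\alpha$ (imported from \cite{RW20} via \cite[Proposition~4.11]{ZZ15}) and the twisted commutation $T_\alpha\cdot f={}^{s_\alpha^\dyn}f\cdot T_\alpha$ for $f$ a local section of $\calO_{(\catA^\vee)^{\reg}}$. Where you diverge is in how this commutation relation is verified. You carry out a direct formal computation inside the twisted group algebra $\bbS$: since $f$ is pulled back from $\catA^\vee$, $\de_\alpha$ commutes with $f$ (because ${}^{s_\alpha}f=f$), and $\de_\alpha^\dyn f={}^{s_\alpha^\dyn}f\,\de_\alpha^\dyn$, so commuting $f$ past both monomials of $T_\alpha=\fp\de_\alpha^\dyn+\fq\de_\alpha\de_\alpha^\dyn$ gives the claim immediately. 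The paper instead verifies the identity on the polynomial representation $\pi^2_*\bbL$ -- computing $T_\alpha(fg)={}^{s_\alpha^\dyn}f\cdot T_\alpha(g)$ -- and then invokes the faithfulness isomorphism $\pi^2_*\tilde\calH^\dyn\cong\sEnd(\pi^2_*\bbL)$ from Theorem~\ref{thm:poly} to lift this back to an identity of algebra elements. Your route is more elementary and avoids the appeal to Theorem~\ref{thm:poly}; it exploits the fact that multiplication in $\calH^\dyn$ is inherited from $\bbS$, so one can argue formally. The paper's route has the (minor) advantage of sitting more naturally alongside how the Hecke algebra is actually used (its representations on $\bbL$), but for proving this lemma your shortcut is entirely adequate and arguably cleaner.
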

Here $T_\alpha$ is recalled in \S~\ref{subsec:DL}. For simplicity, denote $T_\al=\fp\de_\al^\dyn+\fq\de_\al\de_\al^\dyn$.
\begin{proof}
We claim that the natural inclusion $\pi_*\calO_{(\catA^\vee)^{\reg}}\to \pi_*q_*\calH^{\dyn}|_{\catA\times_{\catA/W}\catA\times(\catA^\vee)^{\reg}}$, together with the map of global sections $W\ni\de_\alpha^\dyn \mapsto T_\alpha$ defines a morphism of sheaves of algebras 
\[\calO_{(\catA^\vee)^{\reg}}\rtimes W^\dyn \to q_*\calH^{\dyn}|_{\catA\times_{\catA/W}\catA\times(\catA^\vee)^{\reg}}.\]
Indeed, $\pi_*\calO_{(\catA^\vee)^{\reg}}\rtimes W^\dyn $ is generated by $\pi_*\calO_{(\catA^\vee)^{\reg}}$ and $W^\dyn $, the latter of which in turn is generated by the $\de_\alpha^\dyn $'s. To show the above claim, it suffices to verify the commutation-relations among the  the $\de_\alpha$'s, as well as the commutation-relations between each $\de_\alpha^\dyn $ and $\pi_*\calO_{(\catA^\vee)^{\reg}}$. More precisely, the latter means that $\de_\al^\dyn f={}^{s_\al^\dyn}f \de_\al^\dyn$ for a rational section $f$ of $\pi_*\calO_{(\catA^\vee)^\reg}$. 

Thanks to \cite{RW20} and \cite[Proposition~4.11]{ZZ15}, $T_\alpha$'s satisfy relations of $W$. 
Let $f$ be the pullback of a function on $U\subseteq \calA^\vee$ (hence ${}^{s_\al }f=f$) and $g$ a section of $\bbL$ on any open subset of $\catA\times U$. We have
\begin{align}
T_\alpha(fg)&=\big(\fp\de_\alpha^\dyn +\fq \de_\alpha \de^\dyn _\alpha\big)(fg)=\fp \cdot {}^{s_\al^\dyn}f\cdot {}^{s_\al^\dyn}g
+\fq\cdot {}^{ s_\al^\dyn}f \cdot {}^{s_\al s_\al^\dyn}g={}^{s_\al^\dyn}f\cdot T_\al(g).
\end{align}
By Theorem \ref{thm:poly}, we get that 
\[T_\alpha f={}^{s^\dyn _\alpha}fT_\alpha.\]
This is precisely the commutation relation between $\pi_*\calO_{(\calA^\vee)^\reg}$ and $W^\dyn $. 
\end{proof}

Recall that we denote by $\calH^L$  the GKV elliptic Hecke algebra associated to the Langlands dual root system.

\begin{lemma}\label{lem:global}
We have an algebra homomorphism $\Gamma: \calH^L\to q_*\calH^{\dyn}$ in $\Coh(\catA^\vee\times_{\catA^\vee/W^\dyn }\catA^\vee)$. 
\end{lemma}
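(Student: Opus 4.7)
The plan is to build $\Gamma$ by starting from the algebra homomorphism
\[
\calO_{(\catA^\vee)^{\reg}}\rtimes W^\dyn \to q_*\calH^{\dyn}|_{(\catA^\vee)^{\reg}},\qquad \delta_\alpha^\dyn \mapsto T_\alpha,
\]
supplied by Lemma \ref{thm:Weyl}, and then restricting it along the inclusion $\calH^L \hookrightarrow j^\vee_*(j^\vee)^*(\calO_{\catA^\vee}\rtimes W^\dyn)$ that comes from the residue definition of the Langlands-dual GKV algebra. Since the ambient arrow is already an algebra homomorphism, the algebra property for $\Gamma$ on $\calH^L$ is automatic once the map is shown to exist. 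Thus the only real content is integrality: to verify that the image of $\calH^L$ actually lands in $q_*\calH^{\dyn}$ rather than merely in its restriction to the regular locus.

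First I will apply the Langlands-dual form of Lemma \ref{lem:H_basic}(1) to reduce the problem to checking that $\Gamma$ maps each local generator of $\calH^L$ to a section of $q_*\calH^{\dyn}$. Multiplication by a rational function on $\catA^\vee$ clearly preserves $q_*\calH^{\dyn}$, so the only nontrivial generators to test are those of filtration degree $\leq s_\alpha^\dyn$ for $\alpha$ a simple root, i.e.\ local sections of the form $a + b\,\delta_\alpha^\dyn$ satisfying the dual GKV conditions: order-one poles of $a,b$ along $D_{\alpha^\vee}$, the residue identity $\Res_{\alpha^\vee}(a+b)=0$, and vanishing of $b$ at $D_{\hbar,\alpha^\vee}$.

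The heart of the proof is then to verify that
\[
\Gamma(a + b\,\delta_\alpha^\dyn) \;=\; a + b\mathfrak{p}\,\delta_\alpha^\dyn + b\mathfrak{q}\,\delta_\alpha\delta_\alpha^\dyn
\]
satisfies conditions i), ii), ii'), iii) of Definition \ref{def:res}. For condition i), the only pole not allowed by the definition is the potential $D_{\hbar,\alpha^\vee}$-pole arising from the factor $\vartheta(\hbar-\lambda_{\alpha^\vee})^{-1}$ in $\mathfrak{p}$ and $\mathfrak{q}$, which is cancelled by the vanishing of $b$ at $D_{\hbar,\alpha^\vee}$. Condition ii) along $D_\alpha$ reduces to the residue computation already carried out in \S\ref{subsec:DL} showing $T_\alpha \in \calH^\dyn$. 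For condition ii') along $D_{\alpha^\vee}$, the key observations are $\mathfrak{p}|_{D_{\alpha^\vee}}=1$ and that $\mathfrak{q}$ vanishes along $D_{\alpha^\vee}$ (because of the $\vartheta(\lambda_{\alpha^\vee})$ factor), so the residue translates directly into the GKV relation $\Res_{\alpha^\vee}(a+b)=0$ on the $\calH^L$ side. Condition iii) for the $\delta_\alpha\delta_\alpha^\dyn$-component is immediate from the identity $\mathfrak{q}\cdot \frac{\vartheta(z_\alpha)}{\vartheta(\hbar-z_\alpha)} = \frac{\vartheta(\lambda_{\alpha^\vee})}{\vartheta(\hbar-\lambda_{\alpha^\vee})}$, which has no $z_\alpha$-dependence and is therefore regular at $D_{\hbar,\alpha}$.

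To pass from generators to all of $\calH^L$, I use that $\Gamma$ is already an algebra homomorphism on the regular locus and that, by Theorem \ref{thm:mult}, $q_*\calH^{\dyn}$ is a subsheaf of algebras of $q_*\bbS$; hence images of products of local generators automatically lie in $q_*\calH^{\dyn}$. The principal obstacle is the asymmetry between the residue conditions defining $\calH^L$ (which involve only the dual variables $\lambda$) and those defining $\calH^{\dyn}$ (which involve both $z$ and $\lambda$); this is navigated via the two specialization facts $\mathfrak{p}|_{D_{\alpha^\vee}}=1$ and $\mathfrak{q}|_{D_{\alpha^\vee}}=0$, which make the translation between the two residue calculi essentially mechanical.
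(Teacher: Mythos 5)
Your proposal is correct and follows essentially the same route as the paper: restrict the homomorphism from Lemma~\ref{thm:Weyl} to $\calH^L$, reduce integrality to the rank-one generators $a+b\,\delta^\dyn_\alpha$ via Lemma~\ref{lem:H_basic}(1) and Theorem~\ref{thm:mult}, and verify conditions i)--iii) of Definition~\ref{def:res} using the residue identity $\Res_{\alpha^\vee}(a)+\Res_{\alpha^\vee}(b)=0$ and the zero of $b$ at $D_{\hbar,\alpha^\vee}$. Your packaging of condition~ii') through the specializations $\fp|_{D_{\alpha^\vee}}=1$ and $\fq|_{D_{\alpha^\vee}}=0$ is a clean equivalent of the paper's direct residue evaluation; the only slip is ``rational function'' where ``regular function'' is meant when dismissing the degree-zero generators.
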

\begin{proof}
By Lemma~\ref{thm:Weyl}, we have $j^\vee_*(j^\vee)^*\pi_*\calO_{\catA^\vee}\rtimes W^\dyn \to j^\vee_*(j^\vee)^*q_*\calH^{\dyn}$. 
It suffices  to show that the image of the subalgebra $\calH^L\subseteq j^\vee_*(j^\vee)^*\pi_*\calO_{\catA^\vee}\rtimes W^\dyn $ lands in $q_*\calH^{\dyn}\subseteq j^\vee_*(j^\vee)^*q_*\calH^{\dyn}$. 
The algebra $\calH^L$ is generated locally by the filtered pieces labeled by simple roots. It suffices to show the image of such generators, say, $\sigma=a+b\de_{\al}^\dyn$, are regular sections of $q_*\calH^{\dyn}$. By definition, $\sigma$ is mapped to 
\begin{equation}\label{eq:sigma}
\Gamma(\sigma)=a+bT_\al=a+\frac{b\theta(\hbar)\theta(z_\al-\la_{\al^\vee})}{\theta(\hbar-\la_{\al^\vee})\theta(z_\al)}\de_\al^\dyn+\frac{b\theta(\la_{\al^\vee})\theta(\hbar-z_\al)}{\theta(z_\al)\theta(\hbar-\la_{\al^\vee})}\de_\al\de_\al^\dyn.
\end{equation}

Denote the latter two fractions by $c_1,c_2$, respectively. We check that the GKV residue conditions for $\calH^\dyn$ are satisfied. Since $\sigma$ satisfies the residue conditions for the Langlands dual system, so  $a$ and $b$ have possible simple poles at $D_{\al^\vee}$,  $b$ has an order one zero at the divisor $D_{\hbar, \al^\vee}$ and $\Res_{\al^\vee}(a)+\Res_{\al^\vee}(b)=0$.

Due to cancellation with the zero and pole of $b$, the fraction $c_2$ has only an order one pole at $D_\al$, and an order one zero at $D_{\hbar, \al}$. So Condition iii) is satisfied. 
The possible poles of $a,c_1$ are at the divisors $D_\al$ and $D_{\al^\vee}$.  So Condition i) is satisfied.   For Condition ii), clearly $\Res_\al (a)=0$, and it is easy to see that $\Res_{\al}(a)+\Res_\al(c_1)=0$.  Lastly, we check Condition ii'). We have
\begin{align*}
[\theta(\la_{\al^\vee})\left(a+\frac{b\theta(\hbar)\theta(z_\al-\la_{\al^\vee})}{\theta(\hbar-\la_{\al^\vee})\theta(z_\al)}\right)]|_{\la_{\al^\vee}=0}&=\Res_{\al^\vee}(a)+\Res_{\al^\vee}(b)\cdot (\frac{\theta(\hbar)\theta(z_\al-\la_{\al^\vee})}{\theta(\hbar-\la_{\al^\vee})\theta(z_\al)})|_{\la_{\al^\vee}=0}\\&=\Res_{\al^\vee}(a)+\Res_{\al^\vee}(b)=0.
\end{align*}
 The proof is finished.
\end{proof}
\begin{corollary}\label{cor:proj}
There is a functor $\Mod\calH^\dyn \to \Mod\calH^{L}$ making the following diagram commutative
\[\xymatrix{
\Mod\calH^{\dyn}\ar[r]\ar[d]& \Mod\calH^L\ar[d]\\
 \Coh\catA\times\catA^\vee\ar[r]_-{q_*}&  \Coh\catA^\vee.
}\]
\end{corollary}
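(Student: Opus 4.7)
The plan is to combine Lemma~\ref{lem:global}, which provides an algebra homomorphism $\Gamma\colon\calH^L\to q_*\calH^{\dyn}$ in $\Coh(\catA^\vee\times_{\catA^\vee/W^\dyn}\catA^\vee)$, with a lax-monoidality property of the pushforward $q_*$. The proposed functor sends an $\calH^\dyn$-module $\calM$ in $\Coh(\catA\times\catA^\vee)$ to $q_*\calM\in\Coh(\catA^\vee)$, equipped with an action of $\calH^L$ obtained by restriction of scalars along $\Gamma$. Commutativity of the diagram with the forgetful functors is then immediate by construction.

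First I would make precise the lax-monoidality of $q_*$. The projection $q\colon\catA\times\catA^\vee\to\catA^\vee$ induces, in a manner entirely parallel to Lemma~\ref{lem:lax}, a lax monoidal natural transformation
\[
q_*\calG\star q_*\calM\longrightarrow q_*(\calG\star\calM)
\]
for $\calG\in\Coh(\catA\times_{\catA/W}\catA\times\catA^\vee\times_{\catA^\vee/W^\dyn}\catA^\vee)$ acting on $\calM\in\Coh(\catA\times\catA^\vee)$, landing in the convolution monoidal category on $\catA^\vee\times_{\catA^\vee/W^\dyn}\catA^\vee$ acting on $\Coh(\catA^\vee)$. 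The argument uses base change along the Cartesian square formed by the obvious projections, exactly as in Lemma~\ref{lem:lax}. In particular, applying this to the algebra object $\calH^\dyn$, the sheaf $q_*\calH^\dyn$ is naturally an algebra object in $\Coh(\catA^\vee\times_{\catA^\vee/W^\dyn}\catA^\vee)$, and for any $\calH^\dyn$-module $\calM$, the sheaf $q_*\calM$ becomes a $q_*\calH^\dyn$-module.

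Given this, the construction of the functor is formal: for $\calM\in\Mod\calH^\dyn$, define the $\calH^L$-action on $q_*\calM$ as the composite
\[
\calH^L\star q_*\calM\xrightarrow{\Gamma\star\id}q_*\calH^\dyn\star q_*\calM\longrightarrow q_*(\calH^\dyn\star\calM)\longrightarrow q_*\calM,
\]
where the middle arrow is the lax-monoidal map above and the last arrow is $q_*$ applied to the module structure. Since $\Gamma$ is an algebra homomorphism and the lax structure is compatible with the algebra axioms (a straightforward diagram chase), this defines an honest $\calH^L$-module structure. Functoriality follows since morphisms of $\calH^\dyn$-modules are intertwiners of the action, hence remain intertwiners after applying $q_*$ and precomposing with $\Gamma$. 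The underlying coherent sheaf is, by definition, $q_*\calM$, which makes the square commutative.

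The main obstacle is verifying the lax-monoidality of $q_*$ in the required convolution setting, since the target monoidal category lives on a fibered product over $\catA^\vee/W^\dyn$ rather than over the base of $q$. This is handled precisely as in Lemma~\ref{lem:lax} by producing the relevant Cartesian square
\[
\xymatrix{
\catA\times\catA^\vee\times_{\catA^\vee/W^\dyn}\catA\times\catA^\vee\ar[r]\ar[d] & \catA^\vee\times_{\catA^\vee/W^\dyn}\catA^\vee\ar[d]\\
\catA\times\catA^\vee\ar[r]_-{q} & \catA^\vee,
}
\]
and invoking the unit of the adjunction to produce the lax structure map; once this is in place, the rest of the argument is formal.
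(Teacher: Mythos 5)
Your proposal is correct and follows essentially the same route as the paper's proof: both establish lax-monoidality of $q_*$ in the relevant convolution setting (parallel to Lemma~\ref{lem:lax}), so that $q_*\calM$ is a $q_*\calH^\dyn$-module, and then restrict scalars along the algebra map $\Gamma\colon\calH^L\to q_*\calH^\dyn$ from Lemma~\ref{lem:global}. Your write-up just spells out the composite action map and the Cartesian square explicitly where the paper leaves them implicit.
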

\begin{proof}
By a similar argument as that of Lemma~\ref{lem:lax}, the functor $q_*$ is lax monoidal. Therefore it sends an $\calH^{\dyn}$-module to a $q_*\calH^{\dyn}$-module and hence an $\calH^L$-module.
\end{proof}

\begin{remark}\label{rem:higher_derived}
Similar argument as in Corollary~\ref{cor:proj} shows that for any $r\geq 0$,  the functor $R^r q_*$ sends  an $\calH^{\dyn}$-module to a $q_*\calH^{\dyn}$-module and hence an $\calH^L$-module. Therefore, each $R^r q_*$ extends to a functor $\Mod\calH^\dyn \to \Mod\calH^{L}$. 
\end{remark}

The following is not used in the rest of the paper, but is  included for completeness.

\begin{prop}
The map in Lemma~\ref{thm:Weyl} is an isomorphism.  Consequently, the  group of global sections  of $\calH^{\dyn}|_{\catA\times_{\catA/W}\times(\catA^\vee)^\reg}$ is free of finite rank, with  basis $T_w$, $w\in W$.
\end{prop}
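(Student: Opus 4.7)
My plan is to split the map in Lemma~\ref{thm:Weyl} into its $W^\dyn$-graded components and verify an isomorphism degree by degree. Since $T_\alpha = \fp\de_\alpha^\dyn + \fq\de_\alpha\de_\alpha^\dyn$ lies in the $W^\dyn$-degree-$s_\alpha$ summand, any $T_v$ (well defined by the Weyl relations recalled in \S\ref{subsec:DL}) lies in $W^\dyn$-degree $v$. Hence the map is homogeneous, and it suffices to show that for each $v \in W$ the induced map $f\de_v^\dyn \mapsto f T_v$ on the degree-$v$ component is an isomorphism of $\calO_{(\catA^\vee)^\reg}$-modules.

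Injectivity follows from a leading-term analysis. Expanding the product $T_v = T_{s_{i_1}}\cdots T_{s_{i_k}}$ for a reduced expression $v = s_{i_1}\cdots s_{i_k}$, the top Bruhat term $\de_v\de_v^\dyn$ arises uniquely by selecting the $\fq\de_\alpha\de_\alpha^\dyn$ summand at every factor, producing a nonzero rational section of $\bbS_{v,v}$ as its coefficient. Hence $f\cdot T_v = 0$ forces $f = 0$.

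Surjectivity is the main obstacle. The key simplification is that on the $\la$-regular locus, condition ii') of Definition~\ref{def:res} is vacuous (there are no divisors $D_{\alpha^\vee}$ there). Consequently the $v$-graded piece of $\calH^\dyn|_\reg$ is described by collections $\{a_{w,v}\}_w$ of rational sections of $\bbS_{w,v}$ satisfying precisely the GKV conditions i), ii), iii) in the $z$-direction only. By Lemma~\ref{lem:H_basic}(2) this sheaf is locally free, and by an argument parallel to Remark~\ref{rem:GKV} (applied fibrewise in $\la$) its push-forward along $q$ is a rank-$1$ locally free $\calO_{(\catA^\vee)^\reg}$-module. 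It remains to show that $T_v$ generates this rank-$1$ module, which I would verify by induction on $\ell(v)$: the base case $v = s_\alpha$ follows from inspection of the explicit coefficients of $T_\alpha$, and in the inductive step, writing $T_v = T_\alpha T_{s_\alpha v}$ whenever $\ell(v) = \ell(s_\alpha v) + 1$, the algebra product in $q_*\calH^\dyn|_\reg$ realises a generator of the $v$-piece as a product of generators of the $s_\alpha$- and $(s_\alpha v)$-pieces. The ``Consequently'' statement then follows by taking global sections on $\catA\times_{\catA/W}\catA\times(\catA^\vee)^\reg$: the free basis $\{\de_w^\dyn\}_{w\in W}$ on the left corresponds under the isomorphism to the basis $\{T_w\}_{w\in W}$ on the right.
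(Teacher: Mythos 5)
Your decomposition by $W^\dyn$-degree and the leading-Bruhat-term argument for injectivity are both sound, and they supply detail the paper omits (the paper simply asserts that it suffices to show surjectivity). However, the surjectivity half of your argument has a real gap at its central step: the claim that the $v$-graded piece of $q_*\calH^\dyn|_{\catA\times(\catA^\vee)^\reg}$ is a rank-one locally free $\calO_{(\catA^\vee)^\reg}$-module. You justify this ``by an argument parallel to Remark~\ref{rem:GKV} applied fibrewise in $\lambda$,'' but that remark concerns $\pi_*\tilde\calH\cong\sEnd(\pi_*\calO_\catA)$ for the finite quotient $\pi:\catA\to\catA/W$, which is a rank-$|W|^2$ sheaf. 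Here you are pushing forward along $q$, a projection with projective fibres $\catA$; the stalk of $q_*$ involves $H^0$ of $\catA$ (and potential cohomology-and-base-change issues), not merely pushforward along a finite map, so the analogy does not transfer and the rank-one statement needs its own argument. Relatedly, your inductive step relies on the algebra product carrying generators to generators; for a rank-one module this is equivalent to the product $T_\alpha T_{s_\alpha v}=T_v$ being \emph{nowhere-vanishing} as a section, which is not addressed. The paper's own approach sidesteps the global rank computation by checking surjectivity at stalks over $(\catA^\vee)^\reg/W^\dyn$, where Nakayama-type reasoning applies and only the local structure of the stalk is needed; you would either need to go that route, or substantiate the rank-one assertion directly (for instance by computing $H^0(\catA,\calH^\dyn_{\cdot,v}|_\lambda)$ for $\lambda$ in the regular locus, using the $z$-direction residue conditions and genericity of the twisting line bundles $\bbS_{w,v}|_\lambda$).
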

\begin{proof}
To show the statement about isomorphism, it suffices to show surjectivity, which is can be done at the stalk  of each point in $(\catA^\vee)^{\reg}/W^\dyn $. 
\end{proof}

\begin{remark}
It may be interesting to find $R^r q_*\calH^{\dyn}$ for all $r\geq0$. We refrain from pursuing this question in the present paper. 
\end{remark}

\section{Conclusions}\label{sec:conclutions}
\subsection{Fourier-Mukai transform of representations}
The composition $FM:=q_*\circ (\bbL\otimes -)\circ p^*: \Coh\catA\to\Coh\catA^\vee$ is the Fourier-Mukai transform. 
It is well-known that it induces an equivalence between the abelian category of finitely supported coherent sheaves on $\catA$ and that of flat vector bundles on $\catA^\vee$.

\begin{corollary}\label{cor:fm}
There is a functor 
$\Mod\calH\to \Mod\calH^L$ making the following diagram commutative
\[\xymatrix{
\Mod\calH\ar[d]\ar[r]&\Mod\calH^L\ar[d]\\
\Coh\catA\ar[r]_{FM}&\Coh\catA^\vee.
}\]
\end{corollary}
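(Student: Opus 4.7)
The plan is to simply compose the two functors constructed earlier in the section. Theorem~\ref{thm:pullback} supplies a functor $\Mod\calH \to \Mod\calH^{\dyn}$ whose underlying coherent-sheaf-level functor is $(\bbL\otimes -)\circ p^*: \Coh\catA \to \Coh(\catA\times\catA^\vee)$. Corollary~\ref{cor:proj} supplies a functor $\Mod\calH^{\dyn} \to \Mod\calH^L$ whose underlying coherent-sheaf-level functor is $q_*: \Coh(\catA\times\catA^\vee)\to\Coh\catA^\vee$.

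The composition $\Mod\calH\to\Mod\calH^{\dyn}\to\Mod\calH^L$ is then the required functor. Commutativity of the target square is verified by pasting the two commutative squares provided by Theorem~\ref{thm:pullback} and Corollary~\ref{cor:proj}:
\[\xymatrix{
\Mod\calH\ar[d]\ar[r]&\Mod\calH^{\dyn}\ar[d]\ar[r]&\Mod\calH^L\ar[d]\\
\Coh\catA\ar[r]_-{(\bbL\otimes -)\circ p^*}&\Coh(\catA\times\catA^\vee)\ar[r]_-{q_*}&\Coh\catA^\vee,
}\]
whose bottom row composes to the Fourier-Mukai functor $FM = q_*\circ(\bbL\otimes -)\circ p^*$ by definition.

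There is no real obstacle remaining: all of the work has already been absorbed into Theorem~\ref{thm:pullback} (constructing the intermediate dynamical algebra, the coproduct-like map $\Psi$ of Lemma~\ref{lem:coprod}, and verifying that $\Psi(T)$ descends to an action on $\pi^2_*(\bbL\otimes p^*\calF)$) and into Corollary~\ref{cor:proj} (using the dynamical DL-operators $T_\alpha$ and Lemma~\ref{lem:global} to produce an algebra homomorphism $\calH^L\to q_*\calH^{\dyn}$, together with the lax monoidality of $q_*$). The only ingredient to double-check is that the composition on the module side really does lift the set-theoretic composition on the sheaf side, which is immediate from the commutativity of each of the two component squares.
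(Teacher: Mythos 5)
Your proof is correct and follows exactly the paper's own argument: compose the functor of Theorem~\ref{thm:pullback} with that of Corollary~\ref{cor:proj}, and note that the bottom row of the pasted diagram is by definition $FM = q_*\circ(\bbL\otimes -)\circ p^*$. Nothing further is needed.
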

\begin{proof}
The composition of the functors from Theorem~\ref{thm:pullback} and Corollary~\ref{cor:proj} gives the asserted functor. 
\end{proof}

Remark~\ref{rem:higher_derived} shows that $R^r q_*\circ (\bbL\otimes -)\circ p^*$ for each $r\geq 0$ defines a functor $\Mod\calH\to \Mod\calH^L$.

\subsection{The inverse Fourier-Mukai transform}
Denote $\bbL^*$ the dual of the line bundle $\bbL$ (which is also the inverse of $\bbL$). 
 Denote the corresponding dyanmical Hecke algebra for the Langlands dual system satisfying the residue conditions in Definition \ref{def:res} by $\calH_{-\hbar}^{\dyn, L}$, then by \cite[Propositoin 5.1 and Theorem 6.3]{LZZ23}, together with the residue conditions, we see that $\calH_{-\hbar}^{\dyn, L}$ contains the operator $T^L_\al:=T^{z,\la,\dyn}_{\al}$ (defined in \cite[(26)]{LZZ23}) as a rational section.  

Then the map corresponding to $\Psi$  in Lemma \ref{lem:coprod} is 
\[\Psi^L:\calH_{-\hbar}^{\dyn, L}\to  (\calH_{-\hbar}^{\dyn, L}+\bbS)\otimes q^*\calH^L, \quad \sum_{v,w}f\de_w\de_v^\dyn\mapsto \sum_{v,w}f\de_w\de_v^\dyn\otimes\de_v^\dyn.\]
And the map corresponding to $\Gamma$ in Lemma \ref{lem:global} is 
\[
\Gamma^L:\calH\to p_*\calH^{\dyn, L}_{-\hbar}, ~\sum_{v}a\de_v\mapsto \sum_{v}aT^L_v. 
\]

We  consider the inverse Fourier-Mukai transform of coherent sheaves  \[\widehat{FM}:=[-]_{\catA}^*\circ R^n p_*\circ (\bbL\otimes -)\circ q^*,\] where 
$n$ is the rank of the root system and $[-]_{\catA}:\catA\to \catA$ the inverse map of the abelian variety.
\begin{corollary} 
There is a functor 
$\Mod\calH^L\to \Mod\calH$ making the following diagram commutative
\[\xymatrix{
\Mod\calH^L\ar[d]\ar[r]&\Mod\calH\ar[d]\\
\Coh\catA^\vee\ar[r]_{\widehat{FM}}&\Coh\catA.
}\]
\end{corollary}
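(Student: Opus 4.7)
The plan is to mirror Corollary~\ref{cor:fm} verbatim, making three substitutions: the roles of $\catA$ and $\catA^\vee$ are interchanged, the intermediate dynamical Hecke algebra is $\calH^{\dyn, L}_{-\hbar}$ in place of $\calH^\dyn$, and $q_*$ is replaced by $R^n p_*$; at the end we apply the autoequivalence $[-]^*_\catA$. Thus the desired functor factors as
\[
\Mod\calH^L \longrightarrow \Mod\calH^{\dyn, L}_{-\hbar} \xrightarrow{\,R^n p_*\,} \Mod\calH \xrightarrow{\,[-]^*_\catA\,} \Mod\calH.
\]

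First, I would establish the Langlands-dual analogue of Theorem~\ref{thm:pullback}: a functor $\Mod\calH^L \to \Mod\calH^{\dyn, L}_{-\hbar}$ lifting $(\bbL\otimes -)\circ q^*$. The construction is the word-for-word analogue of Section~\ref{subsec:GKV-dyn}, using the algebra homomorphism $\Psi^L$ exhibited just before the statement in place of $\Psi$, and using $\Gamma^L$-style residue checks to verify that the image of $\calH^{\dyn, L}_{-\hbar}$ lands in $(\calH^{\dyn, L}_{-\hbar}+\bbS)\otimes q^*\calH^L$. The key identity $\Psi^L(T)(fg\otimes h) = \Psi^L(T)(g\otimes fh)$ for $f$ a function pulled back from $\catA^\vee$ follows from the fact that $\delta_w$ acts trivially on such $f$, precisely mirroring the identity ${}^{v^\dyn}\!f=f$ used in the forward direction.

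Second, I would invoke the analogue of Corollary~\ref{cor:proj} and Remark~\ref{rem:higher_derived} with the roles of $p$ and $q$ interchanged, together with the algebra homomorphism $\Gamma^L:\calH\to p_*\calH^{\dyn, L}_{-\hbar}$ recalled above. Lax monoidality of each $R^r p_*$ for the convolution structure (proved as in Lemma~\ref{lem:lax}) produces a $p_*\calH^{\dyn, L}_{-\hbar}$-action on $R^r p_* N$ for any $\calH^{\dyn, L}_{-\hbar}$-module $N$; composition with $\Gamma^L$ then yields an $\calH$-action, and specializing to $r=n$ gives the functor $R^n p_*:\Mod\calH^{\dyn, L}_{-\hbar}\to \Mod\calH$. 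Finally, the inversion $[-]_\catA$ is $W$-equivariant since $W$ acts by linear automorphisms of $\catA$, so it induces a self-equivalence of $\catA\times_{\catA/W}\catA$; one checks that this self-equivalence pulls $\calH$ back to itself (the divisor $D_\alpha$ is preserved and the sign flip on $D_{\hbar,\alpha}$ is precisely compensated by the $-\hbar$ in the intermediate algebra), so $[-]^*_\catA$ lifts to an autoequivalence of $\Mod\calH$. Composing the three functors yields the required functor, with commutativity of the diagram immediate from construction.

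The main obstacle is the second step with higher derived pushforward: although the case $r=0$ is a direct copy of Corollary~\ref{cor:proj}, verifying lax monoidality of $R^n p_*$ with respect to $\star$-convolution requires that no Tor-corrections appear when commuting $Rp_*$ past the external tensor product and the diagonal pullback $\Delta_2^*$. This reduces to local freeness of the algebra $\calH^{\dyn, L}_{-\hbar}$, which is exactly the Langlands-dual version of Lemma~\ref{lem:H_basic}(2); together with the flatness of $\calH$ from Lemma~\ref{lem:flat}, this is enough to ensure that the lax structure map constructed in Lemma~\ref{lem:lax} makes sense with $\pi_*$ replaced by $R^n p_*$. Once this is in place, all remaining verifications are parallel to the forward direction.
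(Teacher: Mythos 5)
Your overall three-step factorization is the right one and matches the paper's approach, but the $\hbar$-bookkeeping in the last two steps is off, and the parenthetical justification you give for the third step does not actually hold up.

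Concretely: you assert that $R^n p_*$ lands in $\Mod\calH$ and that $[-]^*_\catA$ pulls $\calH$ back to itself. Neither is correct. Since $[-]_\catA\colon z\mapsto -z$ preserves each $D_\alpha$ but sends $D_{\hbar,\alpha}$ (where $z_\alpha=\hbar$) to $D_{-\hbar,\alpha}$, the pullback of the residue/vanishing conditions defining $\calH$ gives $[-]^*_\catA\calH\cong\calH_{-\hbar}$, not $\calH$; the ``$-\hbar$ in the intermediate algebra'' plays no role in this pullback and cannot compensate for the sign flip. Correspondingly, the algebra homomorphism $\Gamma^L$ has source $\calH_{-\hbar}$, not $\calH$: writing $T^L_\alpha=\fp^L\de_\alpha+\fq^L\de_\alpha\de^\dyn_\alpha$, the coefficient $\fq^L$ has a pole along $D_{-\hbar,\alpha}$, so for $\Gamma^L(a+b\de_\alpha)$ to satisfy condition~i) of Definition~\ref{def:res} one needs $b$ to vanish along $D_{-\hbar,\alpha}$, i.e.\ $a+b\de_\alpha\in\calH_{-\hbar}$. (The displayed target $\calH$ for $\Gamma^L$ just before the Corollary appears to be a typographical slip in the paper; the proof of the Corollary uses $\calH_{-\hbar}$.) Thus the correct chain is
\[
\Mod\calH^L\longrightarrow\Mod\calH^{\dyn,L}_{-\hbar}\xrightarrow{\,R^n p_*\,}\Mod\calH_{-\hbar}\xrightarrow{\,[-]^*_\catA\,}\Mod\calH,
\]
with the last arrow using $[-]^*_\catA\calH_{-\hbar}\cong\calH$. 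As you wrote it, the second arrow is unsupported and the third would actually land in $\Mod\calH_{-\hbar}$, so the composite does not end in $\Mod\calH$ for the reasons stated. Your remark about needing the local freeness of $\calH^{\dyn,L}_{-\hbar}$ (the analogue of Lemma~\ref{lem:H_basic}(2)) to control Tor-corrections when passing to $R^n p_*$ is a reasonable technical observation, in the same spirit as Remark~\ref{rem:higher_derived}, but it is orthogonal to the gap above.
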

\begin{proof} By definition the functor $(\bbL\otimes -)\circ q^*$ from $\Mod\calH^L$ lands in the category $\Mod \calH^{\dyn,L}_{-\hbar}$. Similarly $p_*$ goes from $\Mod \calH^{\dyn,L}_{-\hbar}$ to $\Mod \calH_{-\hbar}$. Finally, we have an isomorphism \[[-]_{\catA}^*\calH_{-\hbar}\cong \calH.\]
Therefore, $\widehat{FM}$ extends to $\Mod\calH^L\to \Mod\calH$. 
\end{proof}

\subsection{Comparison with the classical FM transform}
Recall that the usual Fourier-Mukai transform \cite[\S~11.2]{P} is defined a priori on the derived categories, and using the Poincar\'e line bundle $\calP$ as the kernel. 
In the present paper,  we use  $\bbL$ as the kernel. Recall that $\bbL=\calP\otimes(\calO(\hbar\rho)\boxtimes\calO(-\hbar\rho^\vee))$.
Therefore, a comparison is in order.
\begin{remark}
For simplicity, we first consider functors on derived categories. All functors in this remark are derived unless otherwise specified. 
The usual Fourier-Mukai transform $FM^0$ is defined as $q_*\circ(-\otimes\calP)\circ p^*:D^b\Coh(\catA)\to D^b\Coh(\catA^\vee)$. 
  Therefore, by projection formula we have
\[FM=(-\otimes\calO(-\hbar\rho^\vee))\circ FM^0\circ(-\otimes\calO(\hbar\rho)).\]
Similarly, the usual inverse Fourier-Mukai transform is $\widehat{FM}^0=[-]_{\catA}^*\circ p_*\circ [-n]\circ(\calP\otimes -)\circ q^*$, where $[-n]$ is homological shift. Again, the projection formula gives
\[\widehat{FM}=(-\otimes \calO(-\hbar\rho))\circ\widehat{FM}^0\circ(-\otimes\calO(-\hbar\rho^\vee)).\]
 The equality $FM^0\circ \widehat{FM}^0=\id$  \cite[Theorem~11.6]{P} then implies that
\begin{align*}
FM\circ \whFM(\calF)&=\calO({-\hbar \rho^\vee})\otimes FM^0\big(\calO({\hbar\rho})\otimes [\calO({-\hbar\rho})\otimes \whFM^0(\calF\otimes \calO({-\hbar\rho^\vee}))]\big)=\calF({-2\hbar\rho^\vee}). 
\end{align*}
Here we denote $\calF(-2\hbar\rho^\vee)=\calF\otimes\calO(-2\hbar\rho^\vee)$ for simplicity. 

 We explicitly describe the natural isomorphism \[-\otimes\calO(-2\hbar\rho^\vee)\to FM\circ \widehat{FM}:D^b\Coh\catA^\vee\to D^b\Coh\catA^\vee \] as follows. For any object $\calF$ in $D^b\Coh\catA^\vee$ (resp.  $\calG$  in $D^b\Coh\catA$) we write $\bbL\otimes_{\catA^\vee}\calF$ (resp. $\bbL\otimes_\catA \calG$) for the object $\bbL\otimes q^*\calF$ (resp. $\bbL\otimes p^*\calG$) on $\catA\times\catA^\vee$. Then, simplifying using base-change and projection formula, we get \[FM\circ \widehat{FM}(\calF)=q_*\left((p^*p_*\calO)\otimes(\bbL[-r]\otimes[-]^*_\catA\bbL\otimes_{\catA^\vee}\calF)\right)\]
By calculation (see also \cite[Lemma~4.4(5)]{LZZ23}), $[-]^*_\catA\bbL\cong\bbL^*\otimes_{\catA^\vee}\calO(-2\hbar\rho^\vee)$. 
The  identity section $\calO\to \bbL^*\otimes\bbL$
 gives \begin{equation}\label{eqn:id_sec}q^*\calF(-2\hbar\rho^\vee)\to \bbL\otimes\bbL^*\otimes_{\catA^\vee}\calF(-2\hbar\rho^\vee).
\end{equation}
Tensoring with the
Serre duality  $\calO\to p^*p_*\calO[-r]$ gives \[q^*\calF(-2\hbar\rho^\vee)\to (p^*p_*\calO[-r])\otimes(\bbL\otimes\bbL^*\otimes_{\catA^\vee}\calF(-2\hbar\rho^\vee)).\]
Applying $q_*$ then yields the map $\calF(-2\hbar\rho^\vee)\to FM\circ \widehat{FM}(\calF)$.

\end{remark}

As remarked above, if $\calF$ is a complex of $\calH$-modules,  taking the $r$th cohomology of the complex $FM(\calF)$ for any $r$ gives a $\calH^L$-module. The functor on abelian categories from Corollary~\ref{cor:fm} is obtained by taking the zero-th cohomology of the complex $FM(\calF)$. 
Nevertheless, if $\calF$ is a flat vector bundle as a coherent sheaf on $\catA$, then the complex $FM(\calF)$ is concentrated in homological degree zero. Furthermore, its zero-th cohomology has finite support as a coherent sheaf on $\catA^\vee$ \cite[Proposition~11.8]{P}. Therefore, we obtain the following lemma.
\begin{lemma}
\begin{enumerate}
\item 
The functor $FM$ from  Corollary~\ref{cor:fm} restricts to an exact functor  between abelian categories $\Mod_{fl}\calH\to \Mod_{fin}\calH^L$.
\item Similarly, the functor $\widehat{FM}$ restricts to the subcategories $\Mod_{fin}\calH^L\to \Mod_{fl}\calH$. 
\end{enumerate}
\end{lemma}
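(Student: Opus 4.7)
The plan is to reduce everything to the classical Mukai--Polishchuk equivalence and then invoke the functorial picture already set up in Corollary~\ref{cor:fm}. The key preliminary observation is the identity (valid by the projection formula)
\[
FM = (-\otimes \calO(-\hbar\rho^\vee))\circ FM^0 \circ ((-)\otimes \calO(\hbar\rho)),
\]
where $FM^0 := q_*\circ(\calP\otimes -)\circ p^*$ is the classical Fourier--Mukai transform with kernel the Poincar\'e line bundle. Under the identification $E \cong \Pic^0(E)$ fixed in the beginning of the paper, the line bundle $\calO(\hbar\rho)=\chi_\rho^*\calO(\{\hbar\}-\{0\})$ is a degree-zero line bundle on $\catA$, and likewise $\calO(-\hbar\rho^\vee)$ is a degree-zero line bundle on $\catA^\vee$. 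Thus tensoring with $\calO(\hbar\rho)$ on $\catA$ preserves the class of iterated extensions of degree-zero line bundles (this is exactly how $\Mod_{fl}\calH$ is characterised via Lemma~\ref{lem:flat}), and tensoring with $\calO(-\hbar\rho^\vee)$ on $\catA^\vee$ preserves zero-dimensional support.

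For part (1), let $\calF\in\Mod_{fl}\calH$. Its underlying sheaf is a flat vector bundle, hence so is $\calO(\hbar\rho)\otimes\calF$ by the observation above. Polishchuk's Proposition~11.8 in \cite{P} then asserts that $FM^0(\calO(\hbar\rho)\otimes\calF)$ is a sheaf (concentrated in cohomological degree zero) with zero-dimensional support on $\catA^\vee$. Tensoring with $\calO(-\hbar\rho^\vee)$ preserves finite support, so the underlying coherent sheaf of $FM(\calF)$ lies in $\Coh_{fin}(\catA^\vee)$. Corollary~\ref{cor:fm} already endows $FM(\calF)$ with an $\calH^L$-module structure, so $FM(\calF)\in\Mod_{fin}\calH^L$. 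Exactness on $\Mod_{fl}\calH$ is immediate: the composite functor $(-\otimes\calO(-\hbar\rho^\vee))\circ q_*\circ(\calP\otimes -)\circ p^*\circ((-)\otimes\calO(\hbar\rho))$ on $\Coh(\catA)$ coincides with $FM$; the outer tensoring operations with line bundles are exact; $p^*$ is exact; and Polishchuk's result shows that on the subcategory of flat vector bundles, only the zero-th direct image $q_*(\calP\otimes -)$ survives, so there are no higher $R^iq_*$ contributions to obstruct exactness.

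For part (2) the argument is parallel, starting from
\[
\widehat{FM} = (-\otimes\calO(-\hbar\rho))\circ \widehat{FM}^0 \circ ((-)\otimes\calO(-\hbar\rho^\vee)),
\]
where $\widehat{FM}^0 = [-]_\catA^*\circ R^np_*\circ(\calP\otimes -)\circ q^*$ is the classical inverse Fourier--Mukai transform (with the shift $[n]$ replaced by taking the top direct image $R^n p_*$, since on finitely supported sheaves all other $R^ip_*$ vanish by the other half of Polishchuk~Prop.~11.8). Tensoring with the degree-zero line bundles $\calO(-\hbar\rho^\vee)$ and $\calO(-\hbar\rho)$ preserves, respectively, the subcategories $\Coh_{fin}(\catA^\vee)$ and the category of flat vector bundles on $\catA$. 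Polishchuk's result then gives the desired restriction $\Coh_{fin}(\catA^\vee)\to\Coh_{fl}(\catA)$ with the appropriate exactness, and the $\calH$-module structure on the image is supplied by the dual of Corollary~\ref{cor:fm} described in the previous subsection.

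The only conceptual step to check carefully is that the twists by $\calO(\pm\hbar\rho)$ and $\calO(\mp\hbar\rho^\vee)$ preserve the two classes of coherent sheaves involved (flat bundles and finitely supported sheaves). This follows from the identifications of these line bundles as elements of $\Pic^0$ recalled above. Everything else is either immediate from Corollary~\ref{cor:fm} or a direct citation of Polishchuk.
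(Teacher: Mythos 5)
Your proposal follows the same approach as the paper's discussion preceding the lemma, and is somewhat more explicit: both reduce to the classical Fourier--Mukai transform via the projection-formula identity $FM=(-\otimes\calO(-\hbar\rho^\vee))\circ FM^0\circ(-\otimes\calO(\hbar\rho))$ (given in the paper's preceding remark), cite \cite[Proposition~11.8]{P}, and use that the twists by the degree-zero line bundles $\calO(\hbar\rho)$ and $\calO(-\hbar\rho^\vee)$ preserve, respectively, homogeneous bundles on $\catA$ and finitely supported sheaves on $\catA^\vee$. You spell out this last point, which the paper treats as implicit. One small misattribution: Lemma~\ref{lem:flat} is a statement about the algebra $\calH$ alone; the description of objects of $\Mod_{fl}\calH$ as iterated extensions of degree-zero line bundles is Mukai's theorem on homogeneous bundles, not Lemma~\ref{lem:flat}.

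One point should be double-checked, in your write-up and in the paper's own sketch alike. The claim that $FM^0$ of a homogeneous bundle is concentrated in cohomological degree $0$ does not agree with Mukai's computation $Rq_*\calP\cong k(0)[-n]$, which places the transform of $\calO_\catA$ (and hence of any homogeneous bundle) in degree $n$. The degree-$0$-preserving direction of the Mukai equivalence is finitely supported $\to$ homogeneous --- precisely the direction the paper describes in the sentence immediately after defining $FM$ (``finitely supported coherent sheaves on $\catA$ and \ldots\ flat vector bundles on $\catA^\vee$''), which is opposite to the direction in the lemma as stated. Dually, for a finitely supported sheaf $\calG$ on $\catA^\vee$, the sheaf $q^*\calG\otimes\bbL$ is supported on $\catA\times(\text{finite set})$, on which $p$ is an isomorphism onto $\catA$, so $R^ip_*$ vanishes for all $i>0$, including $i=n$; your assertion that ``only $R^n p_*$ survives'' has the index reversed. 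This is a tension already present in the paper between the lemma's statement, the definitions of $FM$ and $\widehat{FM}$, and the surrounding prose, and you reproduce the paper's formulation faithfully; but the exactness argument as written hinges on it, so the degree bookkeeping should be revisited rather than cited as a black box.
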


We also have the following. 
\begin{lemma}\label{lem:adj}
The canonical isomorphism $-\otimes\calO(-2\hbar\rho^\vee)\to FM\circ \widehat{FM}:\Coh_{fin}\catA^\vee\to \Coh_{fin}\catA^\vee$ extends to $\Mod_{fin}\calH^L\to \Mod_{fin}\calH^L$.
\end{lemma}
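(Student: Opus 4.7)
My plan is to establish the $\calH^L$-equivariance of the canonical map $\calF(-2\hbar\rho^\vee)\to FM\circ\widehat{FM}(\calF)$ by upgrading each of the two ingredients in its construction to a morphism in the appropriate Hecke-module category and then combining them via the lax monoidal pushforward $q_*$ of Corollary~\ref{cor:proj}. The two ingredients are the identity section $\calO\to\bbL\otimes\bbL^*$, tensored with $q^*\calF(-2\hbar\rho^\vee)$ as in \eqref{eqn:id_sec}, and the Serre-duality trace $\calO\to p^*p_*\calO[-r]$.

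First, I would verify that, after tensoring with $q^*\calF(-2\hbar\rho^\vee)$, the identity section is a morphism of $\calH^{\dyn,L}_{-\hbar}$-modules. This amounts to showing that the trivial section $1$ of $\bbL\otimes\bbL^*\cong\calO$ is fixed (in the appropriate sense) by the operators $T^L_\alpha$ from \S~\ref{subsec:DL}; this can be read off directly from the explicit formulas together with the residue-vanishing condition ii') at $\la_{\al^\vee}=0$. The identification $[-]^*_\catA\bbL\cong\bbL^*\otimes\calO(-2\hbar\rho^\vee)$ from the preceding Remark then promotes this to the desired $\calH^{\dyn,L}_{-\hbar}$-equivariant map $q^*\calF(-2\hbar\rho^\vee)\to\bbL\otimes[-]^*_\catA\bbL\otimes_{\catA^\vee}\calF$.

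Next, I would check that the Serre trace $\calO\to p^*p_*\calO[-r]$ is compatible with the Hecke action once tensored and pushed forward, so that tensoring with $\bbL\otimes_{\catA^\vee}\calF$ and applying $q_*$ yields a morphism in $\Mod_{fin}\calH^L$. The key inputs are that $[-]_\catA$ is a group automorphism, $p$ is a flat group projection, and the convolution-type definition of the $\calH$-action is by construction compatible with base change and the projection formula, so the Serre trace automatically respects the action. The lax monoidal structure of $q_*$ from Corollary~\ref{cor:proj} then bundles the two pieces into a single $\calH^L$-equivariant morphism.

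The main obstacle will be the bookkeeping needed to match two a priori distinct $\calH^{\dyn,L}_{-\hbar}$-module structures on $\bbL^*\otimes_{\catA^\vee}\calF(-2\hbar\rho^\vee)$: the one obtained directly from $\widehat{FM}(\calF)$ via $\Psi^L$ and $\Gamma^L$, and the one obtained from the identity section above. Once this compatibility is verified, the fact that the resulting map is an isomorphism on the level of underlying coherent sheaves (a consequence of $FM^0\circ\widehat{FM}^0=\id$) automatically upgrades to an isomorphism in $\Mod_{fin}\calH^L$.
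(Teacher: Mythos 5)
Your overall strategy---verify equivariance of the identity section \eqref{eqn:id_sec} and treat the Serre trace as functorial---matches the paper's framing, but you have deferred precisely the step that constitutes the proof, and the mechanism you cite for it is not the right one. You attribute the fixing of the identity section to ``the residue-vanishing condition ii')~at $\la_{\al^\vee}=0$.'' That condition was what guaranteed in Lemma~\ref{lem:global} that $\Gamma$ lands in $q_*\calH^\dyn$; it does not, by itself, say anything about the compatibility of the two module structures on $\bbL\otimes\bbL^*\otimes_{\catA^\vee}\calF(-2\hbar\rho^\vee)$. Moreover, the relevant action is not simply ``act by $T^L_\alpha$ on each tensor factor'': it is the composite $(\id_{\calH^\dyn}\otimes_{\catA}\Psi^L)\circ(\id_{\calH^\dyn}\otimes_\catA\Gamma^L)\circ\Psi\circ\Gamma(T)$, and the coproduct-like maps $\Psi,\Psi^L$ mix the factors in a way that cannot be ignored.

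The actual computation that closes the gap uses two specific inputs that your proposal never names. First, on $\bbL\otimes\bbL^*\cong\calO$ one has $\de_\al\otimes\de_\al=1\otimes 1$ and $\de_\al^\dyn\otimes\de_\al^\dyn=1\otimes 1$ as operators; this is what lets you collapse the diagonal reflections. Second, the coefficients of $T_\al$ and $T^L_\al$ satisfy the inverse relation \eqref{eq:inverse}, namely $\fp+\fq\cdot{}^{s_\al s_\al^\dyn}\fp^L=0$ and $\fq\cdot{}^{s_\al s_\al^\dyn}\fq^L=1$ (a theta identity, a special case of \cite[Theorem~7.13]{LZZ23}), not a residue condition. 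Unwinding $\Psi,\Gamma,\Psi^L,\Gamma^L$ for $T=a+b\de_\al^\dyn$, applying the first observation, and then substituting \eqref{eq:inverse} is exactly what reduces the composite action to $1\otimes_\catA 1\otimes_{\catA^\vee}T$. Without carrying out this reduction and without identifying \eqref{eq:inverse} as the engine, the ``bookkeeping'' you defer is not a routine afterthought but the entire content of the lemma, so the proposal as written does not establish the statement.
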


\begin{proof}
We write $T_\alpha$ from \S~\ref{subsec:DL} as $\fp\de_\al^\dyn+\fq\de_\al\de_\al^\dyn$ for short; and  the counterpart for the Langlands dual system as $T_\alpha^L=\fp^L\de_\al+\fq^L\de_\al\de_\al^\dyn$. One can verify easily (see also \cite[Theorem 7.13]{LZZ23}) that 
\begin{equation}\label{eq:inverse}\fp+\fq\cdot {}^{s_\al s_\al^\dyn}\fp^L=0, \quad \fq\cdot{}^{s_\al s_\al^\dyn}\fq^L=1.\end{equation}
We need to show that for a local section of $\calH^L$ of the form $T=a(\lambda)+b(\lambda)\delta^\dyn_\al=a+b\de_\al^\dyn$, its action on a $\calH^L$-module $\calF$ over $\catA^\vee$ is compatible with the action of $(\id_{\calH^\dyn}\otimes_{\catA}\Psi^L)\circ(\id_{\calH^\dyn}\otimes_\catA\Gamma^L)\circ\Psi\circ\Gamma(T)$ via the map \eqref{eqn:id_sec}
\[q^*\calF(-2\hbar\rho^\vee)\to \bbL\otimes\bbL^*\otimes_{\catA^\vee}\calF(-2\hbar\rho^\vee).\]

Unwrapping the definitions of $\Psi,\Psi^L, \Gamma, \Gamma^L$ from \eqref{eq:psi} and \eqref{eq:sigma}, we have
\begin{align*}
&(\id_{\calH^\dyn}\otimes_{\catA}\Psi^L)\circ(\id_{\calH^\dyn}\otimes_\catA\Gamma^L)\circ\Psi\circ\Gamma(a+b\de^\dyn_\al)\\
&=a\otimes_\catA 1\otimes_{\catA^\vee}1+b\fp\de_\al^\dyn\otimes_\catA 1\otimes_{\catA^\vee}1+b\fq\de_\al\de_\al^\dyn\otimes_\catA \fp^L\de_\al \otimes_{\catA^\vee} 1+ b\fq\de_\al\de_\al^\dyn\otimes_\catA \fq^L\de_\al\de_\al^\dyn\otimes _{\catA^\vee}\de_\al^\dyn\\
&\overset{\sharp_1}=a\otimes_\catA 1\otimes_{\catA^\vee}1+b\fp\de_\al^\dyn \otimes_\catA 1\otimes_{\catA^\vee}1+b\fq\cdot{}^{s_\al s_\al^\dyn}\fp^L\de_\al^\dyn\otimes_{\catA}1 \otimes_{\catA^\vee}1+\fq\cdot{}^{s_\al s_\al^\dyn}\fq^L\otimes_{\catA}1\otimes_{\catA^\vee}b\de_\al^\dyn\\
&\overset{\sharp_2}=1\otimes_\catA 1\otimes_{\catA^\vee}a+1\otimes_{\catA}1\otimes_{\catA^\vee}b\de_\al^\dyn\\
&=1\otimes_{\catA}1\otimes_{\catA^\vee}T.
\end{align*}
The identity $\sharp_2$ follows from \eqref{eq:inverse}. For the  identity $\sharp_1$,
Note that from the isomorphism $\bbL\otimes \bbL^*=\bbL\otimes \sHom(\bbL,\calO)\cong \calO, f\otimes g\mapsto g(f)$, we have 
\[(\de_\al\otimes \de_\al)(f\otimes g)=s_\al(g)(s_\al(f))=g(s_\al s_\al (f))=(1\otimes 1)(f\otimes g), \]
and similarly we have $\de_\al^\dyn\otimes \de_\al^\dyn=1\otimes 1$ as operators  on $\bbL\otimes \bbL^*$. 
Therefore, \[
b\fq\de_\al\de_\al^\dyn\otimes_{\catA}\fp^L\de_\al\otimes _{\catA^\vee}1=b\fq\cdot{}^{s_\al s_\al^\dyn}\fp^L\de_\al^\dyn\otimes_\catA 1\otimes_{\catA^\vee}1, ~b\fq\de_\al\de_\al^\dyn \otimes_\catA \fq^L\de_\al\de_\al^\dyn\otimes_{\catA^\vee} \de_\al^\dyn=\fq \cdot{}^{s_\al s_\al^\dyn} \fq^L\otimes_\catA 1\otimes_{\catA^\vee}b\de_\al^\dyn.
\]
They imply the identity $\sharp_1$. The proof is finished.
\end{proof}
We refer an interested reader to \cite[Theorem~7.13]{LZZ23} for a generalization of \eqref{eq:inverse}, and its implications on elliptic classes. 

\subsection{Equivalence of categories}
Combing the above, we have. 
\begin{corollary}
Restricting to subcategories $\Mod_{fl}\calH\to \Mod_{fin}\calH^L$, we get an equivalence of abelian categories. 
\end{corollary}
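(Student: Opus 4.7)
The plan is to combine the previous lemma with Lemma~\ref{lem:adj} and its analogue in the opposite direction. The previous lemma already gives restricted functors $FM\colon \Mod_{fl}\calH\to \Mod_{fin}\calH^L$ and $\widehat{FM}\colon \Mod_{fin}\calH^L\to \Mod_{fl}\calH$, so all that remains is to produce natural isomorphisms witnessing that these are quasi-inverse.

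First, Lemma~\ref{lem:adj} provides a natural isomorphism $FM\circ\widehat{FM}\cong -\otimes\calO(-2\hbar\rho^\vee)$ as endofunctors of $\Mod_{fin}\calH^L$. I would next establish the symmetric statement $\widehat{FM}\circ FM\cong -\otimes\calO(-2\hbar\rho)$ on $\Mod_{fl}\calH$ by running the computation of Lemma~\ref{lem:adj} verbatim after exchanging the roles $(\catA,\calH,\Psi,\Gamma)\leftrightarrow(\catA^\vee,\calH^L,\Psi^L,\Gamma^L)$. The two identities in \eqref{eq:inverse} are symmetric in $T_\al$ and $T_\al^L$, so the reduction of the composite to an identity-twist is formally the same.

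With these two natural isomorphisms in hand, I would then absorb the line bundle twists. Tensoring by $\calO(-2\hbar\rho^\vee)$ is an autoequivalence of $\Coh(\catA^\vee)$ preserving the finite-support condition, and the proof of Lemma~\ref{lem:adj} already exhibits how this autoequivalence lifts compatibly to $\Mod_{fin}\calH^L$; the analogous statement holds for $\calO(-2\hbar\rho)$ on $\Mod_{fl}\calH$. Defining $\widetilde{FM}\colon \Mod_{fin}\calH^L\to \Mod_{fl}\calH$ by $\widetilde{FM}(\calG):=\widehat{FM}(\calG\otimes\calO(2\hbar\rho^\vee))$, the isomorphism $FM\circ\widetilde{FM}\cong\id$ is immediate from Lemma~\ref{lem:adj}, and $\widetilde{FM}\circ FM\cong\id$ follows from the symmetric analogue together with the standard intertwining of tensoring by a line bundle with translation under the classical Fourier--Mukai transform.

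The main obstacle will be the symmetric version of Lemma~\ref{lem:adj}: while the algebraic manipulation is formally parallel to the one already carried out, one must track the opposite ordering of the four coproducts $\Psi,\Psi^L,\Gamma,\Gamma^L$ and the sign flip $\hbar\mapsto-\hbar$ in passing between $\calH^{\dyn}$ and $\calH^{\dyn,L}_{-\hbar}$. The symmetry of \eqref{eq:inverse} under $T_\al\leftrightarrow T_\al^L$ is precisely what ensures the computation goes through with no new input.
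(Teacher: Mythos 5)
The approach you take is genuinely different from the paper's. The paper dispatches the corollary in one stroke using Beck's monadicity theorem: it observes that both forgetful functors $\Mod_{fl}\calH\to\Coh_{fl}\catA$ and $\Mod_{fin}\calH^L\to\Coh_{fin}\catA^\vee$ have left adjoints (induction, with flatness of $\calH$ from Lemma~\ref{lem:flat} ensuring the former is well-defined), that $FM$ and $\widehat{FM}$ are already equivalences on the underlying categories of coherent sheaves by \cite[Proposition~11.8]{P}, and that Lemma~\ref{lem:adj} supplies the compatibility needed to transport the monad across the equivalence. You instead try to build explicit quasi-inverse functors. Both routes are in principle viable, but the paper's route avoids ever having to touch the missing symmetric calculation.

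There are two concrete gaps in your proposal. First, you assert a \emph{symmetric} version of Lemma~\ref{lem:adj}, namely $\widehat{FM}\circ FM\cong -\otimes\calO(-2\hbar\rho)$ as endofunctors of $\Mod_{fl}\calH$, claiming it follows by running the same computation ``verbatim''. This is plausible but nontrivial: the two compositions are not mirror images. On the $\catA$ side $FM$ uses $q_*$ in degree zero, while $\widehat{FM}$ uses $R^n p_*$ with the shift $[-n]$ and the inverse map $[-]_\catA^*$; moreover the Hecke algebra appearing as intermediary changes from $\calH^\dyn$ to $\calH^{\dyn,L}_{-\hbar}$, with the sign of $\hbar$ flipped. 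One must recheck that the residue-condition bookkeeping, the coproduct maps, and the identities \eqref{eq:inverse} really do collapse the composite to the stated twist. That is a genuine calculation, not a formal relabeling, and until it is carried out the quasi-inverse is not available.

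Second, the twist-absorption step does not close. Defining $\widetilde{FM}(\calG)=\widehat{FM}(\calG\otimes\calO(2\hbar\rho^\vee))$ does give $FM\circ\widetilde{FM}\cong\id$ directly from Lemma~\ref{lem:adj}, but the claim $\widetilde{FM}\circ FM\cong\id$ is not immediate even granting the symmetric lemma. Unwinding, $\widetilde{FM}(FM(\calF))=\widehat{FM}\bigl(FM(\calF)\otimes\calO(2\hbar\rho^\vee)\bigr)$, and the ``standard intertwining'' you invoke identifies tensoring by $\calO(2\hbar\rho^\vee)$ with a translation on $\catA$ for the \emph{classical} kernel $\calP$, not for the twisted kernel $\bbL=\calP\otimes(\calO(\hbar\rho)\boxtimes\calO(-\hbar\rho^\vee))$. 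After the symmetric lemma you would land on something of the form $(\tau^*\calF)\otimes\calO(-2\hbar\rho)$, and you have given no reason this should be $\calF$; moreover this intertwining is only established at the level of $\Coh$ and would have to be promoted to a natural isomorphism of $\calH$-module functors. In fact you do not need an explicit quasi-inverse at all: once you know both composites $FM\circ\widehat{FM}$ and $\widehat{FM}\circ FM$ are naturally isomorphic to autoequivalences of the module categories, $FM$ is automatically an equivalence, so you could drop $\widetilde{FM}$ and argue directly. But that route still leaves the symmetric lemma as an outstanding debt, which the paper's monadicity argument cleverly avoids.
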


\begin{proof}
The forgetful functor 
 $\Mod_{fin}\calH^L\to \Coh_{fin}\catA^\vee$ has a left adjoint given by the induction functor
\[\calF\mapsto 
\calH^L\star\calF.\]
 Since $\calH$ is flat as a coherent sheaf on $\catA$ (see Lemma~\ref{lem:flat}), 
the forgetful functor $\Mod_{fl}\calH\to \Coh_{fl}\catA$ has a similar left adjoint. 
The functors $FM$ and $\widehat{FM}$ induce equivalences between  $\Coh_{fl}\catA$ and $\Coh_{fin}\catA^\vee$ \cite[Proposition~11.8]{P}, hence keeping in mind Lemma~\ref{lem:adj}, they  induce equivalences between $\Mod_{fl}\calH$ and $\Mod_{fin}\calH^L$ by Beck's monadicity theorem \cite[Theorem~VI.7.1]{Mac}.
\end{proof}

Let $G$ be the connected, complex reductive group associated to the root system. An $\calO(\hbar)$-values $G$-Higgs bundle is a pair $(P,x)$ where $P$ is a semi-simple semi-stable degree-0 principle $G$-bundle, and $x$ is a section of $\ad(P)\otimes\calO(\hbar)$. We say $(P,x)$ is nilpotent if the image of $x$ lands in the nilpotent cone at each fiber. Let $\Aut(P,x)$ be the group of automorphisms of $(P,x)$, and $C(P,x):=\Aut(P,x)/\Aut^0(P,x)$ the component group. Let $\bfB_{P,x}$ be the variety parametrizing all the Borel structures on $(P,x)$. Clearly, $C(P,x)$ acts on $H^*(\bfB_{P,x};\bbC)$. 
Let $Higgs^{G}_{\hbar}(E)$ be the set of triples $(P,x,\chi)$, where $(P,x)$ is nilpotent and $\chi$  is an
 irreducible $C(P,x)$-module with non-zero multiplicity in $H^*(\bfB_{P,x};\bbC)$.
\begin{corollary}
If the group $G$  is of adjoint type, then  irreducible objects in $\Mod_{fl}\calH$ are in one-to-one correspondence with $Higgs_{\hbar}^{G^L}(E)$.
\end{corollary}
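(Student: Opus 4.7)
The plan is to combine two inputs. The first is the equivalence of abelian categories $\Mod_{fl}\calH \simeq \Mod_{fin}\calH^L$ just established via the Fourier-Mukai transform; under this equivalence irreducibles correspond to irreducibles, so it is enough to parametrize the irreducible objects of $\Mod_{fin}\calH^L$ by $Higgs_{\hbar}^{G^L}(E)$. The second input is an elliptic Deligne--Langlands style classification of these irreducibles, which I would set up as follows.

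For an irreducible object $M$ in $\Mod_{fin}\calH^L$, the underlying coherent sheaf on $\catA^\vee=X^*\otimes E$ has zero-dimensional support which, by irreducibility and the algebra action, is a single $W^\dyn$-orbit. Because $G$ is of adjoint type, the Langlands dual $G^L$ is simply connected; under the standard identification $X^*=\bbX_*(T^L)$ the quotient $\catA^\vee/W^\dyn$ becomes the coarse moduli space of semisimple degree-zero principal $G^L$-bundles on $E$. Thus the support of $M$ determines a semisimple $G^L$-bundle $P$. The parameter $\hbar$, which enters through condition (iii) of Definition~\ref{def:res}, forces the ``root-subspace'' part of the action to be nilpotent at each fibre; packaging this data as a section of $\ad(P)\otimes\calO(\hbar)$ produces the Higgs field $x$, so the support produces the first two entries of a triple in $Higgs_{\hbar}^{G^L}(E)$.

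With $(P,x)$ fixed, the remaining classification should be controlled by Springer-theoretic data on the variety $\mathbf{B}_{P,x}$ of Borel reductions of $(P,x)$. By a Morita/completion argument at the orbit, the simple modules of $\calH^L$ supported there match the simple modules of a convolution algebra acting on $H^*(\mathbf{B}_{P,x};\bbC)$; these are in turn indexed by those irreducible representations $\chi$ of $C(P,x)$ appearing with nonzero multiplicity in $H^*(\mathbf{B}_{P,x};\bbC)$, via the standard Springer decomposition. Assembling across all supports $(P,x)$ produces the asserted bijection with $Higgs_{\hbar}^{G^L}(E)$.

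The main obstacle is installing the Springer/convolution step in the elliptic setting. In the affine (trigonometric) case this is the Kazhdan--Lusztig realization of the affine Hecke algebra via equivariant $K$-theory of the Steinberg variety; its elliptic analogue requires elliptic cohomology of the corresponding elliptic Steinberg variety and an identification of $\calH^L$ with the resulting convolution algebra. Once this geometric realization is available, the adjoint hypothesis is used exactly to ensure that semisimple supports in $\catA^\vee/W^\dyn$ are in bijection with isomorphism classes of semisimple $G^L$-bundles, with no residual quotient by a center distorting the correspondence.
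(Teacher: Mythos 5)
Your first step---passing the question through the equivalence $\Mod_{fl}\calH \simeq \Mod_{fin}\calH^L$ so that it becomes a classification of irreducibles in $\Mod_{fin}\calH^L$, and noting that the adjoint hypothesis on $G$ is exactly what makes $\catA^\vee/W^\dyn$ the coarse moduli of semisimple degree-zero $G^L$-bundles on $E$ (since $G^L$ is then simply connected)---matches the paper's intent. The difference is what happens next. The paper does not re-derive the classification of finitely supported irreducible modules over the GKV elliptic Hecke algebra; it simply invokes \cite[Theorem~B]{ZZ15}, which already records precisely the bijection between such irreducibles and the triples $(P,x,\chi)$ in $Higgs_{\hbar}^{G^L}(E)$. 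Applied to $\calH^L$ and composed with the FM equivalence, the corollary follows in one line.

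You instead attempt to reconstruct that classification from scratch via an elliptic Deligne--Langlands/Springer argument. Your outline is essentially the shape of the result in \cite{ZZ15}: support picks out a $W^\dyn$-orbit, hence a semisimple $G^L$-bundle $P$; condition (iii) of Definition~\ref{def:res} with the $\hbar$-shift enforces nilpotency and produces the Higgs field $x$; the fibrewise piece is governed by a convolution algebra acting on $H^*(\mathbf{B}_{P,x};\bbC)$, and the component group $C(P,x)$ indexes the simples. But you explicitly flag the Springer/convolution step in the elliptic setting as ``the main obstacle'' and do not resolve it. As written, that is a genuine gap: there is no construction of the elliptic Steinberg variety, no identification of $\calH^L$ with the corresponding convolution algebra, and no verification that the Morita/completion argument at an orbit behaves as in the trigonometric case. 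The efficient fix is to cite \cite[Theorem~B]{ZZ15} for the classification of $\Mod_{fin}$-irreducibles of the GKV algebra, exactly as the paper does, rather than sketching a re-proof you cannot complete here.
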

\begin{proof}
This is a direct consequence of Corollary~\ref{cor:fm} and \cite[Theorem~B]{ZZ15}.
\end{proof}

As explained in \cite[\S~7.1]{ZZ15}, representations of $\Mod\calH$ and those of double affine Hecke algebra of Cherednik are expected to be related. 
In the same vein, the Fourier-Mukai transform in the present paper is expected to be compatible with the Fourier transform of the double affine Hecke algebra, known to be related to $S$-duality and classical geometric Langlands duality \cite{Kap}. We leave the relation with  double affine Hecke algebra  to future investigations. 

\appendix
\section{Reflexive sheaves} \label{sec:App}
In this Appendix we collect some facts from commutative algebra and algebraic geometry that are used in the proof of Lemma \ref{lem:H_basic}. 

Let $R$ be a Noetherian local ring, and $M$  a finitely generated $R$-module. 
Recall that $M$ is called reflexive if $(M^*)^*\cong M$.  Let $\omega$ be the dualizing complex. Then, $M$ is maximal Cohen-Macaulay (MCM) if and only if $\Ext_R^q(M,\omega)$ vanishes for all but a single $q\in \bbZ$  \cite[\href{https://stacks.math.columbia.edu/tag/0B5A}{Tag 0B5A}]{Stack}.

The followings are well-known.
\begin{lemma}\label{lem:ref}
\begin{enumerate}
\item $M$ is reflexive if and only if it is torsion free and $M\cong \cap _{\frakp}M_\frakp$ where the intersection is taken among all prime ideals $\frakp$ with height 1(\cite[7.4.2]{Bou98}, see also \cite[\href{https://stacks.math.columbia.edu/tag/0AUY}{Tag 0AUY}]{Stack}); 
\item any MCM module is reflexive (see, e.g., \cite[Satz 6.1]{HK}).  
\end{enumerate}
\end{lemma}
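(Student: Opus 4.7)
For (1) I will prove the two directions separately, and (2) will reduce to (1) via a standard depth-to-reflexive argument.

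For the forward direction of (1), torsion-freeness of a reflexive $M$ is automatic: the dual $M^* = \Hom_R(M,R)$ is torsion-free because any $\varphi \in M^*$ annihilated by a non-zero-divisor $r$ would satisfy $r\varphi(m) = 0$ for every $m$, forcing $\varphi = 0$; hence $M^{**} \cong M$ is torsion-free. For the intersection identity I would work inside $M \otimes_R K$ with $K$ the total ring of fractions and identify $M^{**}$ with $\{x \in M \otimes K : \varphi(x) \in R \text{ for all } \varphi \in M^*\}$. The containment $M^{**} \subseteq \bigcap_{\height \frakp = 1} M_\frakp$ is immediate by functoriality of $(-)^{**}$ and localization, since $(M^{**})_\frakp = (M_\frakp)^{**}$, and over the one-dimensional Noetherian local ring $R_\frakp$ every finitely generated torsion-free module is reflexive. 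The reverse containment follows by evaluating elements of $M^*$ on an element $x$ of the intersection: each $\varphi(x)$ lies in every $R_\frakp$ with $\height \frakp = 1$, and hence in $R$ once the ambient ring is $S_2$.

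For the reverse direction of (1), given $M$ torsion-free with $M = \bigcap_{\height\frakp=1} M_\frakp$, I chain the natural maps
\[
M \hookrightarrow M^{**} \hookrightarrow \bigcap_{\height \frakp = 1} (M^{**})_\frakp = \bigcap_{\height \frakp = 1} M_\frakp = M,
\]
where the middle equality once more uses reflexivity of finitely generated torsion-free modules over one-dimensional Noetherian local rings. The composite being the identity then forces $M \cong M^{**}$.

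For (2), the key point is that an MCM module has optimal depth: $\depth_R M = \dim R$. Localization preserves the MCM property, so $\depth_{R_\frakp} M_\frakp = \height(\frakp)$ for every prime $\frakp$. In particular $\depth M_\frakp \geq \min(2, \height \frakp)$, which is Serre's condition $S_2$. A standard consequence (essentially the content of part (1)) is that $S_2$ modules are torsion-free and satisfy $M = \bigcap_{\height \frakp = 1} M_\frakp$; part (1) then upgrades this to reflexivity. The main obstacle is the $S_2$-hypothesis on the ambient ring $R$ that surfaces in (1) in order to conclude $R = \bigcap_{\height\frakp=1} R_\frakp$. In the paper's application this is automatic: the rings arise from Cohen-Macaulay quotients on $\catA\times\catA^\vee$ by finite group actions, hence are $S_2$. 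A secondary subtlety is confirming that localization preserves MCM in the local-to-local setting, which follows from the depth lemma together with $\dim R_\frakp = \height\frakp$.
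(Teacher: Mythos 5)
The paper supplies no proof of this lemma at all; it is stated as a citation-only collection of standard facts. So your sketch is providing genuine additional content rather than replicating the paper's approach, and the overall strategy — identify $M^{**}$ with an intersection over height-one primes, then reduce (2) to (1) via a depth argument — is the classical one and is correct in spirit.

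There is, however, a real gap in the auxiliary claim you invoke twice: ``over a one-dimensional Noetherian local ring $R_\frakp$ every finitely generated torsion-free module is reflexive.'' This is false in general. For example, over $R = k[[t^3,t^4,t^5]]$ (a one-dimensional Cohen--Macaulay local domain that is not Gorenstein), the ideal $M = (t^3,t^4)$ is torsion-free but $M^{**} = \frakm \supsetneq M$, so $M$ is not reflexive. What you actually need is that each localization $R_\frakp$ at a height-one prime be a DVR, i.e.\ that $R$ satisfy Serre's condition $R_1$ (or at least $G_1$, Gorenstein in codimension one). You correctly flag the $S_2$ hypothesis on $R$ at the end — which is what makes $R = \bigcap_{\height\frakp=1}R_\frakp$ — but the companion $R_1$ hypothesis is needed and goes unmentioned. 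Together $R_1 + S_2$ is exactly normality (Serre's criterion), which is the usual hypothesis under which the characterization in (1) holds and under which MCM modules (more generally, $S_2$ modules) are reflexive. In the paper's application the ambient ring is the finite quotient of the smooth variety $\catA\times\catA^\vee$, hence normal, so both conditions hold; but as written your intermediate claim would mislead a reader into thinking the one-dimensional statement is unconditional. With that hypothesis made explicit, the rest of your argument — the bidual computation, the chain of inclusions in the reverse direction of (1), and the $\depth M_\frakp \geq \min(2,\height\frakp)$ reduction in (2) — goes through.
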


We say a coherent sheaf  is reflexive (resp. MCM, resp. torsion-free) if its stalk at each point is reflexive (resp. MCM, resp. torsion-free). 
For any Noetherian scheme $Y$, let 
$\omega_Y$ be the normalized dualizing complex \cite[\href{https://stacks.math.columbia.edu/tag/0AX1}{Tag 0AX1}]{Stack}.

\begin{lemma}\label{lem:MCM} Let $X$ be a smooth quasi-projective variety on which the finite group $W$ acts.  Let $\pi:X\to X/W=Y$ be the canonical projection. Let $\calF$ be  a vector bundle over $X$.  Then 
\begin{enumerate}
\item $R^n\pi_*\calF=0$ for $n>0$;
\item $\sExt^n_Y(\pi_*\calF, \omega_Y)=0$  for $n>0$;
\item 
In particular,  $\pi_*\calF$ is MCM. 
 \end{enumerate}
\end{lemma}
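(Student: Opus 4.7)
The plan is to deduce all three claims from standard properties of finite morphisms and Grothendieck duality, with the smoothness of $X$ supplying the key input.

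Because $W$ is finite and $X$ is quasi-projective, $\pi : X \to Y$ is a finite morphism, hence affine and proper. Statement (1) is then immediate from the fact that higher direct images under an affine morphism vanish on quasi-coherent sheaves. As a byproduct, $\pi_*$ is exact on the category of coherent sheaves on $X$, a fact I will reuse in the proof of (2).

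For (2), I would apply Grothendieck duality for the proper morphism $\pi$ to write
\[
R\sHom_Y(\pi_*\calF,\omega_Y)\;\cong\;\pi_*R\sHom_X(\calF,\pi^!\omega_Y).
\]
Because $\pi$ is finite between equidimensional schemes of the same dimension, $\pi^!$ preserves the normalization of the dualizing complex, so $\pi^!\omega_Y\cong \omega_X$. Since $X$ is smooth of dimension $d$, $\omega_X\cong \Omega^d_X[d]$ is a shifted line bundle, concentrated in a single cohomological degree. Because $\calF$ is locally free, $R\sHom_X(\calF,\omega_X)=\sHom_X(\calF,\omega_X)$ remains a vector bundle concentrated in that same degree. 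Applying the exact functor $\pi_*$ (exactness was just (1)) preserves this concentration, so $\sExt_Y^n(\pi_*\calF,\omega_Y)$ vanishes outside a single value of $n$; in particular, it vanishes for all $n>0$.

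Part (3) is then immediate from the MCM criterion recorded at the start of the appendix: concentration of $\sExt_Y^\bullet(\pi_*\calF,\omega_Y)$ in a single degree is exactly what characterizes $\pi_*\calF$ as maximal Cohen–Macaulay. The only step that demands real care is the bookkeeping of shift conventions, i.e.\ verifying that the single degree of concentration obtained in (2) is consistent with the normalization under which the vanishing range $n>0$ is asserted. This amounts to the compatibility $\pi^!\omega_Y\cong \omega_X$ (no shift) for a finite morphism between equidimensional schemes, which is standard but worth spelling out once.
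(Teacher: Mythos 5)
Your proof is correct and follows essentially the same strategy as the paper's: exploit that $\pi$ is finite (hence affine and exact on pushforward) for part (1), apply Grothendieck duality with $\pi^!\omega_Y\cong\omega_X$ for part (2), and invoke the MCM criterion in terms of concentration of $\sExt^\bullet(-,\omega_Y)$ for part (3). The only cosmetic difference is that the paper packages the degeneration via a Grothendieck spectral sequence while you argue directly from exactness of $\pi_*$; you are also more explicit about the compatibility $\pi^!\omega_Y\cong\omega_X$ and the attendant shift conventions, which the paper leaves implicit.
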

\begin{proof}
Since $W$ is finite,  $\pi$ is a finite morphism. This implies that $\pi_*$ is an exact functor. Therefore, $R^n\pi_*\calF=0$ for $n>0$. 

We now have $R\pi_*\calF\cong \pi_*\calF$. 
From the Grothendieck duality, we have  
\[R\sHom_Y(R\pi_*\calF,\omega_Y)\cong R\pi_*R\sHom_X(\calF,\omega_X).\]
We calculate the right hand side using the Grothendieck spectral sequence. Note that 
\[
R^p\pi_*\sExt^q_X(\calF,\omega_X)=0\] for $p>0$. We obtain \[
\sExt_Y^q(\pi_*\calF,\omega_Y)\cong \pi_*\sExt_X^q(\calF,\omega_X)\cong \begin{cases} 0, & \text{ if }q>0;\\
\pi_*(\calF^\vee\otimes \omega_X), & \text{ if }q=0.
\end{cases}
\]
By   \cite[\href{https://stacks.math.columbia.edu/tag/0B5A}{Tag 0B5A}]{Stack}, $\pi_*\calF$ is MCM.
\end{proof}

Finally, we remark that in the same setup as in Lemma~\ref{lem:MCM}, $\pi_*\calF$ is  locally free if and only if $Y$ is smooth. Indeed, since $\pi$ is a finite morphism, $\pi_*\calF$ being locally free is equivalent to $\calF$ being flat over $Y$, which in turn is equivalence to $\pi$ being a flat morphism since $\calF$ is locally free on $X$. Since $X$ is projective and $Y$ is connected, if $\pi$ is flat, it is faithfully flat. Since $X$ is flat over $\Spec \bbC$, so is $Y$ \cite[\href{https://stacks.math.columbia.edu/tag/01U2}{Tag 01U2}]{Stack}. Conversely, if $Y$ is smooth, then by miracle flatness the map $\pi$ is flat.

\newcommand{\arxiv}[1]
{\texttt{\href{http://arxiv.org/abs/#1}{arXiv:#1}}}
\newcommand{\doi}[1]
{\texttt{\href{http://dx.doi.org/#1}{doi:#1}}}
\renewcommand{\MR}[1]
{\href{http://www.ams.org/mathscinet-getitem?mr=#1}{MR#1}}


\begin{thebibliography}{00}
\bibitem[AM10]{AM10} M. Aguiar, S. Mahajan, {\em Monoidal Functors, Species and Hopf Algebras}, With forewords by Kenneth Brown and Stephen Chase and Andr\'e Joyal. CRM Monograph Series, 29. American Mathematical Society, Providence, RI, 2010. lii+784 pp. ISBN: 978-0-8218-4776-3.


\bibitem[Bou98]{Bou98} N Bourbaki, {\em Commutative Algebra: Chapters 1-7}, Springer Berlin, 1998.

\bibitem[Ha77]{Hart} R. Hartshorne, {\em Algebraic Geometry}, GTM, volume 52, Springer, 1977.

\bibitem[HK71]{HK}
 J. Herzog, E. Kunz, {\em Der kanonische Modul eines Cohen-Macaulay-Rings}, LNM, volume 238,
 (1971)


\bibitem[GKV97]{GKV97}
V.~Ginzburg, M.~ Kapranov, and E.~Vasserot,
{\em Residue construction of Hecke algebras}, Adv. Math. {\bf 128} (1997), no. 1, 1--19. \MR{1451416}


\bibitem[K06]{Kap} A. Kapustin, {\em Holomorphic reduction of N=2 gauge theories, Wilson-'t Hooft operators, and S-duality}, 2006. \arxiv{0612119}


\bibitem[LZZ23]{LZZ23} C. Lenart, G. Zhao and C. Zhong, {\em Elliptic classes via the periodic Hecke module and its Langlands dual}, preprint, 
\arxiv{2309.09140}.

\bibitem[M71]{Mac} S. MacLane, {\em Categories for the Working Mathematician}, Graduate Texts in Mathematics 5, Springer (1971).

\bibitem[P03]{P} A. Polishchuk, {\em Abelian Varieties, Theta Functions and the Fourier Transform}, Cambridge University Press, (2003).

\bibitem[RW20]{RW20} Rim\'anyi, Weber, {\em Elliptic classes of Schubert varieties via Bott-Samelson resolution}, J. Topol. 13 (2020), no. 3, 1139--1182. \MR{4100129}




\bibitem[S80]{Sie} C.L.~Siegel, {\em Advanced Analytic Number Theory}, (1980). \MR{0659851}



\bibitem[SGA3]{SGA3} 
M.~Demazure, A.~Grothendieck, 
{\em Sch\'emas en groupes I, II, III}, 
Lecture Notes in Math 151, 152, 153, Springer-Verlag, New York, 1970,
and new edition :  Documents Math\'ematiques 7, 8,  Soci\'et\'e Math\'ematique de France, 2003.  

\bibitem[S]{Stack} Stack Project authors, {\em Stack Project}, \url{
https://stacks.math.columbia.edu/}


\bibitem[ZZ22]{ZZ15}
G. Zhao and C. Zhong, {\em Representations of the elliptic affine Hecke algebras}, Advances in Mathematics
Volume 395, 24 February 2022, 108077.

\end{thebibliography}
\end{document}